\definecolor{light-gray}{gray}{0.60}
\newcounter{notes}%
\theoremstyle{plain}
\newtheorem{theorem}{Theorem}
\newtheorem{proposition}[theorem]{Proposition}
\newtheorem{corollary}[theorem]{Corollary}
\newtheorem{lemma}[theorem]{Lemma}
\newtheorem{fact}[theorem]{Fact}
\theoremstyle{definition}
\newtheorem{definition}[theorem]{Definition}
\newtheorem{remark}[theorem]{Remark}
\newtheorem{remarks}[theorem]{Remarks}
\newtheorem{notation}[theorem]{Notation}
\numberwithin{theorem}{section}
\numberwithin{equation}{section}
\newcommand{\D}{\mathrm{d}}
\newcommand{\NN}{\mathbb{N}}
\newcommand{\ZZ}{\mathbb{Z}}
\newcommand{\RR}{\mathbb{R}}
\newcommand{\CC}{\mathbb{C}}
\newcommand{\HH}{\mathbb{H}}
\newcommand{\PP}{\mathbb{P}}
\newcommand{\SL}{\mathrm{SL}}
\newcommand{\GL}{\mathrm{GL}}
\newcommand{\SO}{\mathrm{SO}}
\newcommand{\OO}{\mathrm{O}}
\newcommand{\PSL}{\mathrm{PSL}}
\newcommand{\PGL}{\mathrm{PGL}}
\newcommand{\g}{\mathfrak{g}}
\newcommand{\oo}{\mathfrak{o}}
\newcommand{\Ad}{\operatorname{Ad}}
\newcommand{\Hom}{\mathrm{Hom}}
\newcommand{\ie}{i.e.\ }
\newcommand{\eg}{e.g.\ }
\newcommand{\resp}{resp.\ }
\newcommand{\Kap}{C}
\newcommand{\kap}{c}
\newcommand{\col}{m}
\newcommand{\Hpqv}{\widecheck{\mathbb{H}}^{p,q} }
\newcommand{\Hpqvv}{\overset{\rotatebox{90}{\footnotesize\!\!<<}}{\mathbb{H}}{}^{p,q} }
\newcommand{\PF}{{\mathrm{\scriptscriptstyle PF}}}
\newcommand{\llangle}{\langle \! \langle}
\newcommand{\rrangle}{\rangle \! \rangle}
\title{Proper affine actions for right-angled Coxeter groups}
\author{Jeffrey Danciger}
\address{Department of Mathematics, The University of Texas at Austin, 1 University Station C1200, Austin, TX 78712, USA}
\email{jdanciger@math.utexas.edu}
\author{Fran\c{c}ois Gu\'eritaud}
\address{CNRS and Universit\'e Lille 1, Laboratoire Paul Painlev\'e, 59655 Villeneuve d'Ascq Cedex, France}
\email{francois.gueritaud@math.univ-lille1.fr}
\author{Fanny Kassel}
\address{CNRS and Institut des Hautes \'Etudes Scientifiques, 35 route de Chartres, 91440 Bures-sur-Yvette, France}
\email{kassel@ihes.fr}
\thanks{J.D. was partially supported by an Alfred P. Sloan Foundation fellowship, and by the National Science Foundation under grants DMS~1510254, and DMS~1812216.
F.G. and F.K. were partially supported by the Agence Nationale de la Recherche under grants DiscGroup (ANR-11-BS01-013) and DynGeo (ANR-16-CE40-0025-01), and through the Labex CEMPI (ANR-11-LABX-0007-01).
This project received funding from the European Research Council (ERC) under the European Union's Horizon 2020 research and innovation programme (ERC starting grant DiGGeS, grant agreement No 715982).
The authors also acknowledge support from the GEAR Network, funded by the National Science Foundation under grant numbers DMS 1107452, 1107263, and 1107367 (``RNMS: GEometric structures And Representation varieties").
Part of this work was completed while J.D. and F.K. were in residence at the MSRI in Berkeley, California, for the program \emph{Dynamics on Moduli Spaces of Geometric Structures} (Spring 2015) supported by NSF grant DMS~1440140}
\begin{document}

\maketitle

\begin{abstract}
For any right-angled Coxeter group $\Gamma$ on $k$ generators, we construct proper actions of~$\Gamma$ on $\OO(p,q+1)$ by right-and-left multiplication, and on the Lie algebra $\oo(p,q+1)$ by affine transformations, for some $p,q\in\NN$ with $p+q+1=k$.
As a consequence, any virtually special group admits proper affine actions on some~$\RR^n$: this includes \eg surface groups, hyperbolic 3-manifold groups, examples of word hyperbolic groups of arbitrarily large virtual cohomological dimension, etc. 
We also study some examples in cohomological dimension two and four, for which the dimension of the affine space may be substantially reduced.
\end{abstract}

%%%%%%%%%%%%%%%%%%%%%%%%%%%%%%%%%%%%%%%%%%%%%%%%%%%
\section{Introduction} \label{sec:intro}

Tiling space with regular shapes is an old endeavor, both practical and ornamental.
It is also at the heart of crystallography, and Hilbert, prompted by recent progress in that discipline, asked in his 18th problem for a better understanding of regular tilings of Euclidean space $\RR^n$. 
In 1910, Bieberbach \cite{bie11-12} gave a partial answer by showing that a discrete group~$\Gamma$ acting properly by affine Euclidean isometries on~$\RR^n$ has a finite-index subgroup acting as a lattice of translations on some affine subspace $\mathbb{R}^m$.
Moreover, $m=n$ if and only if the quotient $\Gamma\backslash \RR^n$ is compact, and the number $\mathscr{N}_n$ of such cocompact examples $\Gamma$ up to affine conjugation is finite for fixed $n$.
Crystallographers had known since 1891 that $\mathscr{N}_2=17$ and $\mathscr{N}_3=219$ (or $230$ if chiral meshes are counted twice), a result due independently to Schoenflies and Fedorov.

The picture for affine actions becomes much less familiar in the absence of an invariant Euclidean metric.
The Auslander conjecture \cite{aus64} states that if $\Gamma$ acts properly discontinuously and cocompactly on~$\RR^n$ by affine transformations, then $\Gamma$ should be virtually (\ie up to finite index) solvable, or equivalently \cite{mil77}, virtually polycyclic.
This conjecture has been proved up to dimension six \cite{fg83,tom16,ams12} and in certain special cases \cite{gk83,tom90,ams11}, but remains wide open in general.

In 1983, Margulis \cite{mar83, mar84} constructed the first examples of proper actions of \emph{nonabelian free groups} $\Gamma$ on~$\RR^3$, answering a question of Milnor~\cite{mil77}.
These actions do not violate the Auslander conjecture as they are not cocompact.
They preserve a flat Lorentzian structure on~$\RR^3$ and the corresponding affine $3$-manifolds are now known as \emph{Margulis spacetimes}.
Drumm~\cite{dru92} constructed more examples of Margulis spacetimes by building explicit fundamental domains in~$\RR^3$ bounded by polyhedral surfaces called \emph{crooked planes}; it is now known \cite{cdg16, dgk-strips, dgk-parab} that all Margulis spacetimes are obtained in this way.
Full properness criteria for affine actions by free groups on $\RR^3$ were given by Goldman--Labourie--Margulis~\cite{glm09} and subsequently by the authors~\cite{dgk16, dgk-strips, dgk-parab}. 
In higher dimensions, Abels--Margulis--Soifer \cite{ams97,ams02} have studied proper affine actions by free groups whose linear part is Zariski-dense in an indefinite orthogonal group, showing that such actions exist if and only if the signature is, up to sign, of the form $(2m,2m-1)$ with $m\geq 1$.
In \cite{smi16}, Smilga generalized Margulis's construction and showed that for any noncompact real semisimple Lie group~$G$ there exist proper actions, on the Lie algebra $\g\simeq\RR^{\dim(G)}$, of nonabelian free discrete subgroups of $G\ltimes\g$ acting affinely via the adjoint action, with Zariski-dense linear part; Margulis spacetimes correspond to $G=\PSL(2,\RR)\simeq\SO(2,1)_0$.
More recently, Smilga gave a sufficient condition \cite{smi16bis} (also conjectured to be necessary), given a real semisimple Lie group $G$ and a linear representation $V$ of~$G$, for the semidirect product $G\ltimes V$ to admit a nonabelian free discrete subgroup acting properly on~$V$ and whose linear part is Zariski-dense in~$G$.

%%%%%%%%%%%%%%%%%%%%%%%%%
\subsection{New examples of proper affine actions}

The existence of proper affine actions by nonabelian free groups suggests the possibility that other finitely generated groups which are not virtually solvable might also admit proper affine actions.
However, in the more than thirty years since Margulis's discovery, very few examples have appeared.
In particular, until now, all known examples of word hyperbolic groups acting properly by affine transformations on~$\RR^n$ were virtually free groups. In this paper, we give many new examples, both word hyperbolic and not, by establishing the following.

\begin{theorem} \label{thm:main}
Any right-angled Coxeter group on $k$ generators admits proper affine actions on $\RR^{k(k-1)/2}$.
\end{theorem}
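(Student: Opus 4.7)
The plan is to deduce Theorem~\ref{thm:main} from the finer statement advertised in the abstract: for every right-angled Coxeter group~$\Gamma$ on~$k$ generators, there exist $p,q\in\NN$ with $p+q+1=k$ such that~$\Gamma$ admits proper affine actions on the Lie algebra $\oo(p,q+1)$. Since $\dim\oo(p,q+1) = (p+q+1)(p+q)/2 = k(k-1)/2$, such an action immediately yields a proper affine action on $\RR^{k(k-1)/2}$.

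To construct it, I would first embed $\Gamma$ as a discrete reflection subgroup of some $\OO(p,q+1)$ via a Tits--Vinberg representation. Let $s_1,\dots,s_k$ denote the standard generators of~$\Gamma$ and $E_\Gamma$ the set of pairs $\{i,j\}$ for which $s_is_j$ has order~$2$. Choose positive parameters $\ell_{ij}$ for $\{i,j\}\notin E_\Gamma$ and form the symmetric matrix $B\in M_k(\RR)$ with $B_{ii}=1$, $B_{ij}=0$ if $\{i,j\}\in E_\Gamma$, and $B_{ij}=-\ccosh(\ell_{ij})$ otherwise. For generic choices of the~$\ell_{ij}$, the form~$B$ is nondegenerate of signature $(p,q+1)$ with $p+q+1=k$, and mapping each $s_i$ to the reflection across the hyperplane $B$-orthogonal to the $i$-th basis vector produces, by standard reflection-group theory, a faithful, discrete representation $j_0 : \Gamma \to \OO(p,q+1)$.

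Next, I would perturb the parameters into $\ell_{ij}(t) = \ell_{ij}+t\mu_{ij}$ with $\mu_{ij}\in\RR$ to be chosen, obtaining a smooth family $j_t : \Gamma \to \OO(p,q+1)$. For small $t$, the pair $(j_t, j_{-t})$ defines a right-and-left action of~$\Gamma$ on $G=\OO(p,q+1)$ by $\gamma\cdot g = j_t(\gamma)\,g\,j_{-t}(\gamma)^{-1}$. Linearizing this family at $t=0$ produces an affine action of~$\Gamma$ on $\oo(p,q+1)\cong T_e G$: the linear part is $\Ad\circ j_0$, and the translational cocycle $u : \Gamma \to \oo(p,q+1)$ is extracted from the derivative $\frac{d}{dt}\big|_{t=0} j_t(\gamma)$, with the vectors~$\mu_{ij}$ determining which cohomology class of~$u$ is realized.

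The main obstacle is proving that this affine action is proper. The strategy, inherited from the authors' earlier work on Margulis spacetimes, is to establish a uniform ``margin'' estimate comparing $j_t$ and $j_{-t}$: a pseudo-Riemannian translation length associated to~$j_{-t}(\gamma)$ should uniformly strictly dominate that of $j_t(\gamma)$, with a definite multiplicative gap independent of $\gamma\in\Gamma$. Once such a uniform Lipschitz-type estimate is known at the group level, a general linearization principle ensures that the infinitesimal affine action on~$\oo(p,q+1)$ is proper. Here the right-angled hypothesis is crucial: commuting generators $s_i, s_j$ produce $B$-orthogonal reflection walls which remain orthogonal under the deformation, so the ``margin'' contributed by each generator should combine without cancellation along arbitrary words in~$\Gamma$. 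Verifying this combinatorial-geometric assertion uniformly over all words, in arbitrary signature $(p,q+1)$, is the heart of the proof.
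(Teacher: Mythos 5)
Your high-level skeleton is aligned with the paper's: you deform a Tits--Vinberg reflection representation $\rho_t : \Gamma \to \OO(p,q+1)$, extract a cocycle $u$ by differentiating, and then want to show the resulting affine action on $\oo(p,q+1) \simeq \RR^{k(k-1)/2}$ is proper. The paper also reduces to the irreducible case first (Theorem~\ref{thm:proper-action-g}) and handles products by direct sums — a step you should include, since the construction you sketch uses strong irreducibility in essential ways (see Proposition~\ref{prop:strong-irred} and the proof of Lemma~\ref{lem:Russian-dolls}, both of which fail for reducible~$\Gamma$).

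The genuine gap is in the properness argument. You invoke a ``uniform margin'' on pseudo-Riemannian translation lengths plus a ``general linearization principle,'' but no such general principle exists, and a uniform length gap alone is far from sufficient. The paper has to build a substantial mechanism: (1) it constructs a specific $\rho_t(\Gamma)$-invariant properly convex open set $\Omega_t$ sitting inside a copy $\Hpqv_t$ of $\HH^{p,q}$ — this is the content of Lemma~\ref{lem:Russian-dolls}, involving the dual convex cone and the truncated simplex $\Sigma_t$, and it is not automatic from right-angledness; (2) it shows (Lemma~\ref{lem:spacelike-Gamma-orbit}) that $\rho_t(\Gamma)$-orbits in $\Omega_t$ escape to infinity \emph{in spacelike directions}, which is what makes the pseudo-metric $d_{\HH^{p,q}}$ usable even though it is not a genuine distance; (3) it proves a contraction-to-properness criterion (Theorem~\ref{thm:contract-proper}, via the explicit equivariant projection $\pi: \g \to \mathcal F(\mathcal O)$ in Proposition~\ref{prop:coarse-proj-g-pq}); and (4) it verifies the spacelike lipschitz bound on the deformation vector field via an explicit quadric-expansion calculation (Lemma~\ref{lem:quadric-expansion}) combined with Hilbert-geometry estimates (Lemma~\ref{lem:compare-general}), rather than any word-by-word margin combination. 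Your intuition that contributions ``combine without cancellation'' does have a correct analogue — it is the additivity of the Hilbert length along projective lines inside the convex domain $\Omega_t$ — but identifying and exploiting that structure is the actual work of the proof, and your proposal leaves all of it unaddressed.

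One smaller point: the paper's deformation uses a single parameter $t$ scaling all the $\infty$-labelled entries simultaneously (so $B_{ij}=t$ for $m_{ij}=\infty$), and takes the cocycle $u_t = \frac{\D}{\D s}\big|_{s=t}\rho_s\rho_t^{-1}$ at an interior $t < -1$. Your family with independent parameters $\ell_{ij}$ and the symmetric pair $(j_t, j_{-t})$ is a legitimate variant of the same idea; this is a cosmetic difference, not a gap.
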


Right-angled Coxeter groups, while simple to describe in terms of generators and relations, have a rich structure and contain many interesting subgroups.
For example, the fundamental group of any closed orientable surface of negative Euler characteristic embeds as a finite-index subgroup in the right-angled pentagon group.
Further, since any right-angled Artin group embeds into a right-angled Coxeter group \cite{dj00}, we obtain the following answer to a question of Wise \cite[Problem\,13.47]{wis14}.

\begin{corollary} \label{cor:RAAG}
Any right-angled Artin group admits proper affine actions on~$\RR^n$ for some $n\geq 1$.
\end{corollary}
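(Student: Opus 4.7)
The plan is to deduce Corollary \ref{cor:RAAG} immediately from Theorem \ref{thm:main} together with the Davis--Januszkiewicz embedding result \cite{dj00} already cited in the excerpt.

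Concretely, given a right-angled Artin group $A$, I would first invoke \cite{dj00} to obtain an injective homomorphism $A \hookrightarrow W$, where $W$ is some right-angled Coxeter group on a finite number $k$ of generators. Then Theorem \ref{thm:main} furnishes a proper affine action of $W$ on $\RR^{k(k-1)/2}$. Restricting this action along the embedding produces an affine action of $A$ on $\RR^n$ with $n = k(k-1)/2$, which is the desired conclusion.

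The only point that requires a word of justification is that proper discontinuity passes to subgroups. This is automatic: for any compact $K \subset \RR^n$, the set $\{a \in A : a K \cap K \neq \emptyset\}$ is contained in $\{w \in W : w K \cap K \neq \emptyset\}$, which is finite by hypothesis, hence finite itself. No dynamical input beyond Theorem \ref{thm:main} is needed.

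There is no real obstacle to overcome here; the substance of the corollary lies entirely in the two inputs. If anything, the only thing worth checking is that the Davis--Januszkiewicz construction indeed lands in a \emph{right-angled} Coxeter group rather than a more general Coxeter group, so that Theorem \ref{thm:main} is applicable — but this is precisely the content of their embedding, which associates to each right-angled Artin group a specific right-angled Coxeter group (built combinatorially from the defining graph of $A$) and exhibits $A$ as a subgroup. With this in hand the corollary is immediate.
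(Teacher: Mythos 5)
Your proof is correct and matches the paper's (unstated but intended) argument exactly: embed the right-angled Artin group into a right-angled Coxeter group via Davis--Januszkiewicz, apply Theorem~\ref{thm:main}, and restrict, noting that properly discontinuous actions remain properly discontinuous on subgroups.
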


See \eg \cite{bb97} for interesting subgroups of right-angled Artin groups, for which Corollary~\ref{cor:RAAG} provides proper affine actions.
Note that, in general, if a group $\Gamma$ admits a subgroup $\Gamma'$ of index~$m$ with a proper affine action on~$\RR^n$, then the induced action of $\Gamma$ on $(\RR^n)^{\Gamma/\Gamma'}\simeq \RR^{mn}$ is itself affine and proper.
Haglund--Wise \cite{hw08} proved that the fundamental group of any \emph{special} nonpositively curved cube complex embeds into a right-angled Artin group, and so we obtain the following.

\begin{corollary} \label{cor:virt-special}
Any virtually special group admits a proper affine action on~$\RR^n$ for some $n\geq 1$.
\end{corollary}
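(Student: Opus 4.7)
The plan is to chain together three ingredients already flagged in the excerpt: Corollary~\ref{cor:RAAG} (which itself follows from Theorem~\ref{thm:main} via the Davis--Januszkiewicz embedding of right-angled Artin groups into right-angled Coxeter groups), the Haglund--Wise embedding theorem, and the standard induction procedure for proper affine actions on finite-index subgroups.

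First, I would unpack the hypothesis: $\Gamma$ virtually special means there is a finite-index subgroup $\Gamma'\leq\Gamma$ which is isomorphic to the fundamental group of a special nonpositively curved cube complex. By Haglund--Wise \cite{hw08}, such a $\Gamma'$ embeds as a subgroup of some right-angled Artin group $A$. Next, by Corollary~\ref{cor:RAAG}, $A$ admits a proper affine action on some $\RR^n$; restricting this action along the embedding $\Gamma'\hookrightarrow A$ yields a proper affine action of $\Gamma'$ on the same $\RR^n$, since properness is preserved under restriction to a subgroup (a set is compact in the ambient quotient whenever it is compact in the subgroup quotient, and the return set only shrinks).

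Finally, to promote the action from $\Gamma'$ to $\Gamma$, I would invoke the induction observation already recorded in the excerpt just before Corollary~\ref{cor:virt-special}: if $\Gamma'$ has index $m$ in $\Gamma$ and acts properly and affinely on $\RR^n$, then the induced representation of $\Gamma$ on $(\RR^n)^{\Gamma/\Gamma'}\simeq\RR^{mn}$ is again affine (translational parts lift to the sum of translates, linear parts act by a block-permutation representation), and properness is inherited because a divergent sequence in $\Gamma$ necessarily has representatives in some fixed coset whose $\Gamma'$-components diverge. This produces a proper affine action of $\Gamma$ on $\RR^{mn}$, completing the proof.

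There is essentially no serious obstacle here: the entire statement is a bookkeeping consequence of Corollary~\ref{cor:RAAG} (the substantive content lying in Theorem~\ref{thm:main}) combined with the deep Haglund--Wise embedding theorem, which is cited as a black box. The only point that merits a line of care is verifying that the induced action is genuinely affine and genuinely proper, but both checks are routine.
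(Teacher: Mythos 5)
Your proposal is correct and follows exactly the route the paper takes: restrict the proper affine action of a right-angled Artin group (Corollary~\ref{cor:RAAG}) along the Haglund--Wise embedding of the finite-index special subgroup, then induce up to the full group using the observation recorded just before Corollary~\ref{cor:RAAG}. Nothing to add.
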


Virtually special groups include:
\begin{itemize}
  \item all Coxeter groups (not necessarily right-angled) \cite{hw10};
  \item all cubulated word hyperbolic groups, using Agol's virtual specialness theorem~\cite{ago13};
  \item therefore, all fundamental groups of closed hyperbolic $3$-manifolds, using \cite{sag95,km12}: see \cite{bw12};
  \item the fundamental groups of many other $3$-manifolds, see \cite{wis11,liu13,pw18}.
\end{itemize}

Januszkiewicz--\'Swi\c{a}tkowski \cite{js03} found word hyperbolic right-angled Coxeter groups of arbitrarily large virtual cohomological dimension; see also \cite{osa13} for another construction.
Hence another consequence of Theorem~\ref{thm:main} is:

\begin{corollary} \label{cor:vcd}
There exist proper affine actions by word hyperbolic groups of arbitrarily large virtual cohomological dimension.
\end{corollary}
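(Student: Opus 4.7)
The plan is to derive Corollary~\ref{cor:vcd} as an immediate consequence of Theorem~\ref{thm:main} combined with the existence result of Januszkiewicz--\'Swi\c{a}tkowski \cite{js03}. Since the virtual cohomological dimension is an intrinsic invariant of an abstract group, we do not need to worry about the action itself contributing to the dimension count --- we only need a supply of word hyperbolic right-angled Coxeter groups whose virtual cohomological dimensions tend to infinity, and we appeal to Theorem~\ref{thm:main} to produce the proper affine action.

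More precisely, I would first invoke \cite{js03} (or alternatively \cite{osa13}) to fix, for each $n \in \NN$, a word hyperbolic right-angled Coxeter group $\Gamma_n$ of virtual cohomological dimension $\mathrm{vcd}(\Gamma_n) \geq n$. Let $k_n$ denote the number of generators in the standard Coxeter presentation of $\Gamma_n$. Then Theorem~\ref{thm:main} provides a proper affine action of $\Gamma_n$ on $\RR^{k_n(k_n-1)/2}$.

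Since $n$ is arbitrary, this yields an infinite family of word hyperbolic groups admitting proper affine actions, whose virtual cohomological dimensions are unbounded. No step here presents any real obstacle; the entire content of the corollary is encapsulated in the two inputs, namely Theorem~\ref{thm:main} of this paper and the construction of \cite{js03}. The only mild point worth checking is that the groups produced by \cite{js03} are genuinely right-angled Coxeter groups (and not merely Coxeter groups), so that Theorem~\ref{thm:main} applies directly; this is indeed the case in the construction of Januszkiewicz--\'Swi\c{a}tkowski.
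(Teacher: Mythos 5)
Your proposal matches the paper's own (implicit) argument exactly: the corollary is stated immediately after the remark that Januszkiewicz--\'Swi\c{a}tkowski \cite{js03} (see also \cite{osa13}) produce word hyperbolic right-angled Coxeter groups of arbitrarily large virtual cohomological dimension, and follows directly by applying Theorem~\ref{thm:main} to those groups. Nothing further is needed, and your observation that the \cite{js03} groups are genuinely right-angled Coxeter groups is the only point worth verifying, as you note.
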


The Auslander conjecture is equivalent to the statement that a group acting properly discontinuously by affine transformations on~$\RR^n$ is either virtually solvable, or has virtual cohomological dimension $<n$.
In the examples from Theorem~\ref{thm:main}, the dimension $n = k(k-1)/2$ of the affine space grows quadratically in the number of generators $k$, while the virtual cohomological dimension of the Coxeter group acting is naively bounded above by~$k$ (and is even much smaller in the examples above \cite{js03, osa13}).
Hence, Theorem~\ref{thm:main} is far from giving counterexamples to the Auslander Conjecture.

%%%%%%%%%%%%%%%%%%%%%%%%%
\subsection{An outline: properness from contraction properties} \label{subsec:outline}

In order to describe our approach to proving Theorem~\ref{thm:main}, start with a Lie group $G$ acting by isometries on a complete metric space $\mathbb{X}$.
Consider a discrete group $\Gamma$ and a representation $(\rho,\rho'):\Gamma\rightarrow G\times G$ such that $\Gamma$ acts properly discontinuously on $\mathbb{X}$ via~$\rho$.
The action of $\Gamma$ on~$G$ by right-and-left multiplication via $(\rho,\rho')$, given by 
\begin{equation} \label{eqn:aff-group-action}
\gamma\bullet g := \rho'(\gamma)g\rho(\gamma)^{-1},
\end{equation}
is not necessarily properly discontinuous: for instance, if $\rho'=h\rho(\cdot)h^{-1}$ for some~$h$, then $h\in G$ is a global fixed point.
On the other hand, in some cases the action~\eqref{eqn:aff-group-action} may be shown to be properly discontinuous by exhibiting a map  $f : \mathbb{X}\to \mathbb{X}$ which is \emph{uniformly contracting} (\ie with Lipschitz cons\-tant $<1$) and $(\rho,\rho')$-equivariant: $f\circ \rho(\gamma)=\rho'(\gamma)\circ f$ for all $\gamma \in \Gamma$.
The basic idea is that the $(\rho, \rho')$-action of $\Gamma$ on $G$ projects equivariantly down to the $\rho$-action on $\mathbb X$ via the \emph{fixed point map} $g \mapsto \mathrm{Fix}(g^{-1}\circ f)$, which is well defined by the contraction property.
Proper discontinuity of the action on the base~$\mathbb X$ then implies proper discontinuity upstairs on~$G$: see Section~\ref{subsec:metric-contract-G}.

This principle suggests an infinitesimal analogue, as in \cite{dgk16}, for $(\rho, \rho')$ very close to the diagonal of $G\times G$ and $f$ close to $\mathrm{Id}_\mathbb{X}$.
Let $\g = T_e G$ be the Lie algebra of~$G$, with $G$ acting on~$\g$ via the adjoint representation.
Given a representation $(\rho,u) : \Gamma\to G\ltimes\g$, the affine action of $\Gamma$ on~$\g$ given by
\begin{equation}\label{eqn:lie-algebra-action}
\gamma \bullet v := \Ad(\rho(\gamma))v + u(\gamma)
\end{equation}
is in many cases obtained from~\eqref{eqn:aff-group-action} by a limiting and rescaling process, thinking of $G \ltimes \g \cong TG$ as the normal bundle to the diagonal in $G \times G$.
Such an affine action on~$\g$ will be properly discontinuous (because $\g$ will project onto~$\mathbb{X}$ in an equivariant way similar to the above) if we can build a \emph{uniformly contracting vector field} on $\mathbb{X}$ satisfying an appropriate $(\rho,u)$-equivariance property (Sections \ref{subsec:def-inf-metric-contract}--\ref{subsec:metric-contract-g}).

The affine actions we construct for Theorem~\ref{thm:main} will all be of the form~\eqref{eqn:lie-algebra-action} for $G = \OO(p,q+1)$ an indefinite orthogonal group.
Indeed, a right-angled Coxeter group $\Gamma$ on $k$ generators (say, infinite and irreducible) admits explicit families of discrete embeddings $\rho : \Gamma\to\OO(p,q+1)$ as a \emph{reflection group}, with $p+q+1=k$, which have long been studied by Tits, Vinberg, and others (see Section~\ref{subsec:remind-Coxeter}).

The above strategy of ensuring properness from contraction works well when $q = 0$: in this case we take $\mathbb X$ to be the Riemannian symmetric space of $G = \OO(p,1)$, namely the real hyperbolic space $\HH^p$.
For representations $\rho,\rho' : \Gamma\to G$ as reflection groups as above, the action of $\Gamma$ on $\HH^p$ via~$\rho$ is by reflections in the walls of a polytope $P_\rho$ of $\HH^p$, and similarly for~$\rho'$. 
Natural $(\rho,\rho')$-equivariant maps $f$ are constructed by taking $P_\rho$ projectively to $P_{\rho'}$, wall to wall, and extending equivariantly by the reflections.
The map $f$ will turn out to be uniformly contracting as soon as, roughly speaking, $P_{\rho'}$ is obtained by pushing the walls of $P_\rho$ closer together. See Section~\ref{sec:metric-contraction}, and the examples of Section~\ref{sec:riem-ex}.

For the general case of Theorem~\ref{thm:main}, we cannot consider only $q=0$, as most right-angled Coxeter groups do not embed in $\OO(p,1)$.
In the case $q > 0$, the orthogonal group $G = \OO(p,q+1)$ has higher real rank, and we are faced with a dilemma.
On the one hand, the contraction strategy, as described above, starts with a metric $G$-space $\mathbb X$, such as the Riemannian symmetric space $\mathbb X_G$ of~$G$.
On the other hand, a perhaps more natural space in which to see the geometry of the Tits--Vinberg representations $\rho: \Gamma \to G=\OO(p,q+1)$ is in a \emph{pseudo-Riemannian symmetric space}, namely the pseudo-Riemannian analogue $\HH^{p,q}\subset\PP(\RR^k)$ of $\HH^p$ in signature $(p,q)$.
Indeed:
\begin{enumerate}[label=(\roman*),leftmargin=1cm]
  \item The generators of~$\Gamma$ naturally act via~$\rho$ by reflections in the walls of a polytope $P_\rho$ of $\HH^{p,q}$, depending continuously on~$\rho$.
  \item We may build natural $(\rho,\rho')$-equivariant maps $f$ by taking $P_\rho$ projectively to~$P_{\rho'}$, walls to walls, just as in the $\HH^p$ case.
  \item Since the ``distances'' in $\HH^{p,q}$ are computed by a simple cross-ratio formula, similar to $\HH^p$ in the projective model, the ``contraction'' properties of our map $f$ (for some suitable notion of contraction) are easy to check locally in the fundamental domain~$P_\rho$.
\end{enumerate}
By contrast, for the action on the Riemannian symmetric space $\mathbb X_G$, there are no obvious choices of fundamental domains~(i) or of explicit equivariant maps~(ii) with which to work. Further, any $G$-invariant metric on $\mathbb{X}_G$ involves the singular values of $k\times k$ matrices, making local contraction~(iii) potentially difficult to check.
Hence, we abandon the Riemannian symmetric space~$\mathbb X_G$, and prove Theorem~\ref{thm:main} by employing a version of the contraction strategy outlined above, adjusted and reinterpreted appropriately to work in the pseudo-Riemannian space $\HH^{p,q}$ (see Section~\ref{sec:pseudo-Riem-contraction}).
Despite the obvious hurdle that $\HH^{p,q}$ is not a metric space, enough structure survives to apply our approach: a key step will be to check that $\rho(\Gamma)$-orbits in $\HH^{p,q}$ escape mostly in \emph{spacelike} directions, in which their growth resembles that of actions on~$\HH^p$ (Lemma~\ref{lem:spacelike-Gamma-orbit}).

In the remainder of this introduction, we give a more precise account of our parallel results concerning actions on $G$ and on~$\g$, starting with the case of~$\g$ yielding Theorem~\ref{thm:main}.

%%%%%%%%%%%%%%%%%%%%%%%%%
\subsection{Proper actions on Lie algebras} \label{subsec:propli}

Let $G$ be a Lie group, acting on its Lie algebra~$\g$ via the adjoint action, and let $\Gamma$ be a discrete group.
We consider affine actions of $\Gamma$ on $\g$ determined, as in~\eqref{eqn:lie-algebra-action}, by a representation $(\rho,u): \Gamma \to G \ltimes \g \simeq TG$ where $\rho : \Gamma\to G$ is a group homomorphism and $u : \Gamma\to\g$ a \emph{$\rho$-cocycle}, \ie a map satisfying
\begin{equation} \label{eqn:cocycle}
u(\gamma_1\gamma_2) = u(\gamma_1) + \Ad(\rho(\gamma_1))\,u(\gamma_2)
\end{equation}
for all $\gamma_1,\gamma_2\in\Gamma$.
For instance, for any smooth path $(\rho_t)_{t \in I}$ in $\Hom(\Gamma,G)$ (where $I$ is an open interval) and any $t\in I$, the map $u_t : \Gamma\to\g$ given by $u_t(\gamma) = \frac{\D}{\D\tau}\big |_{\tau=t}\,\rho_{\tau}(\gamma) \rho_t(\gamma)^{-1}$ is a $\rho_t$-cocycle; it is the unique $\rho_t$-cocycle such that for all $\gamma\in\Gamma$,
\begin{equation} \label{eqn:integrate}
\rho_{s}(\gamma) = \mathrm{e}^{(\tau-t)u_t(\gamma)+o(\tau-t)}\rho_t(\gamma) \:\: \text{ as } \tau\to t.
\end{equation}
The cocycles in this paper will all be constructed in this way.
(In general there may exist cocycles which are not integrable, \ie not tangent to any deformation path: see \cite[\S 2]{lm85}.)
We prove the following.

\begin{theorem} \label{thm:proper-action-g}
For any irreducible right-angled Coxeter group~$\Gamma$ on $k$ generators, there exist $p,q\in\NN$ with $p+q+1=k$ and a smooth path $(\rho_t)_{t\in I}$ in $\Hom(\Gamma,G)$ of faithful and discrete representations into $G:=\OO(p,q+1)$ (where $I\neq\varnothing$ is an open interval) such that for any $t\in I$, the affine action of $\Gamma$ on $\g \simeq \RR^{k(k-1)/2}$ via $(\rho_t,\frac{\D}{\D s}\big |_{s=t}\,\rho_{s}\rho_t^{-1})$ is properly discontinuous.
\end{theorem}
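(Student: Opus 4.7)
The plan is to implement the pseudo-Riemannian/infinitesimal version of the contraction strategy described in Section~\ref{subsec:outline}, with $\HH^{p,q}$ replacing the Riemannian symmetric space of $G$ throughout. The argument breaks naturally into three steps: producing a suitable deformation path of Tits--Vinberg reflection representations, constructing from this path an equivariant vector field on $\HH^{p,q}$ with infinitesimal contraction properties, and then using the spacelike escape of $\rho_t(\Gamma)$-orbits to translate this contraction into proper discontinuity of the affine action on~$\g$.

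First, I would appeal to the Tits--Vinberg realization of right-angled Coxeter groups. Given $\Gamma$ irreducible on $k$ generators, there exists (for appropriate $p,q\in\NN$ with $p+q+1=k$) a nonempty space of faithful discrete representations $\rho:\Gamma\to G:=\OO(p,q+1)$ sending generators to reflections in the walls of a polytope $P_\rho\subset\HH^{p,q}$, with two walls meeting orthogonally exactly when the corresponding generators commute. Inside this space I would select a smooth family $(\rho_t)_{t\in I}$ for which the walls of $P_t:=P_{\rho_t}$ are pushed strictly closer together, in the pseudo-Riemannian cross-ratio sense, as $t$ varies. Concretely, this is achieved by smoothly varying the normal covectors of the walls while respecting all orthogonality constraints and monitoring the evolution of pairwise inner products.

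Second, for each $t\in I$ I would build a $(\rho_t,\rho_s)$-equivariant projective map $f_{t,s}:\HH^{p,q}\to\HH^{p,q}$ by identifying $P_t$ with $P_s$ wall-to-wall and extending equivariantly by the reflection-group action. Differentiating at $s=t$ produces a $(\rho_t,u_t)$-equivariant vector field $X_t$ on $\HH^{p,q}$, with $u_t$ precisely the cocycle appearing in the theorem. By construction, the infinitesimal pinching of walls translates into an infinitesimal contraction property for $X_t$ along spacelike tangent directions; cross-ratio computations local to $P_t$ suffice to check this, exactly as in the $\HH^p$ case, because commuting generators correspond to pairs of orthogonal walls. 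Third, I would combine this infinitesimal contraction with Lemma~\ref{lem:spacelike-Gamma-orbit}, which guarantees that $\rho_t(\Gamma)$-orbits in $\HH^{p,q}$ escape \emph{mostly in spacelike directions}. From $X_t$ one then extracts a $(\rho_t,u_t)$-equivariant fixed-point-type correspondence from $\g$ into $\HH^{p,q}$: to $v\in\g$ one associates the locus where the Killing field $v^{\#}$ on $\HH^{p,q}$ agrees with $X_t$, which is well controlled thanks to the contraction. Proper discontinuity of the $\rho_t$-action on $\HH^{p,q}$ (automatic for reflection groups) then lifts to proper discontinuity of the affine action $(\rho_t,u_t)$ on $\g$.

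The main obstacle I expect is the global, quantitative side of step two: local infinitesimal contraction inside $P_t$ is a cross-ratio calculation, but upgrading it to a uniform, $\Gamma$-equivariant statement throughout $\HH^{p,q}$ is delicate because the indefinite metric on $\HH^{p,q}$ allows timelike or lightlike components of $X_t$ to a priori amplify under iterated reflections and spoil properness. Ruling this out is exactly what forces the abandonment of the Riemannian symmetric space of~$G$ in favor of~$\HH^{p,q}$, and is exactly what Lemma~\ref{lem:spacelike-Gamma-orbit} is designed to neutralize in the final step; marrying the spacelike contraction of $X_t$ to the spacelike escape of $\rho_t(\Gamma)$-orbits is, in my view, the technical heart of the argument.
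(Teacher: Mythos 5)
Your outline matches the paper's strategy: a deformation path of Tits--Vinberg reflection representations, piecewise-projective $(\rho_t,\rho_s)$-equivariant maps $f_{t,s}$, differentiation to equivariant vector fields, spacelike infinitesimal contraction verified by cross-ratios, and proper discontinuity via spacelike orbit escape (Lemma~\ref{lem:spacelike-Gamma-orbit}). But two claims that you treat as routine are in fact where the argument lives, and as stated they would not go through.

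First, ``selecting a smooth family for which the walls are pushed strictly closer together, in the pseudo-Riemannian cross-ratio sense'' is not something you get for free, and the local contraction is not a plain cross-ratio computation in $P_t$. The paper's choice is to vary the Cartan matrix $M_t = \mathrm{Id}+tN$ for $t<-1$ (where $N$ is the adjacency matrix of non-commuting generator pairs), and the key fact making the contraction verifiable is Lemma~\ref{lem:quadric-expansion}: conjugating by $M_\tau$ (see diagram~\eqref{eqn:diagram}) turns the comparison map $\Phi_{t,\tau}$ into the \emph{identity} on the image of the fundamental chamber, so that the infinitesimal lipschitz rate of $f_{t,\tau}$ along a spacelike chord is governed by the outward normal velocity of the boundary of the \emph{dual} quadric $\Hpqvv_\tau$. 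That velocity is computed in Lemma~\ref{lem:quadric-expansion} and converted into a Hilbert-length rate by Lemma~\ref{lem:compare-general}. Without this dual-quadric conjugation trick it is unclear what ``local cross-ratio computation'' you would actually carry out, since the fundamental chamber is a simplex of $\PP(\RR^k)$, not a round ball, and there is no obvious ambient metric in which the wall arrangement is shrinking.

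Second, your claim that the generators act by reflections in the walls of a polytope $P_\rho \subset \HH^{p,q}$ is false as stated: the Tits--Vinberg simplex $\Delta_t$ is not contained in $\HH^{p,q}$. The paper truncates it to $\Sigma_t\subset\HH^{p,q}$ and works inside the invariant domain $\Omega_t=\mathrm{Int}(\rho_t(\Gamma)\cdot\Sigma_t)$; Lemma~\ref{lem:Russian-dolls} shows $\Omega_t$ is nonempty, properly convex, and contained in $\HH^{p,q}$, and the proper convexity is what lets one split a spacelike chord of $\Omega_t$ into sub-segments inside translates of $\Sigma_t$ and sum the local lipschitz estimates via Proposition~\ref{prop:deriv-dHpq}. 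Lemma~\ref{lem:spacelike-Gamma-orbit} then serves a different purpose than the one you describe: not taming ``timelike amplification of $X_t$ under iterated reflections,'' but showing that $x\mapsto\Vert(Z_t-Y)(x)\Vert_x$ is a proper function on $\Omega_t$ (because orbits escape in spacelike directions with $d_{\HH^{p,q}}\to\infty$), so that the equivariant map $\pi:\g\to\mathcal{F}(\mathcal{O})$ of Proposition~\ref{prop:coarse-proj-g-pq} is well defined. The genuine obstacle presented by $\HH^{p,q}$ is that $d_{\HH^{p,q}}$ fails the triangle inequality, which is why the Hadamard-manifold argument of Proposition~\ref{prop:coarse-proj-g} must be replaced by the convexity-plus-orbit-escape structure just described rather than applied directly.
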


Since any right-angled Coxeter group is a direct product of irreducible ones, we obtain Theorem~\ref{thm:main} by applying Theorem~\ref{thm:proper-action-g} to each irreducible factor and then taking the direct sum of the resulting affine actions.

We also use similar techniques as in Theorem~\ref{thm:proper-action-g} to construct, in some specific cases, examples of proper affine actions on $\g=\oo(p,q+1)$ where $p+q+1$ is smaller than the number $k$ of generators of~$\Gamma$.

\begin{proposition} \label{prop:ex-affine}
(a) For any even $k\geq 6$, the group $\Gamma$ generated by reflections in the sides of a convex right-angled $k$-gon of $\HH^2$ admits proper affine actions on $\g=\oo(3,1)\simeq\RR^6$.

(b) The group $\Gamma$ generated by reflections in the faces of a $4$-dimensional regular right-angled $120$-cell admits proper affine actions on $\g=\oo(8,1)\simeq\nolinebreak\RR^{36}$. 
\end{proposition}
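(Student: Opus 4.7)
The plan is to apply the same infinitesimal contraction strategy used for Theorem~\ref{thm:proper-action-g}, but based on a specifically chosen \emph{low-dimensional} reflection representation that exploits special geometric features of the $k$-gon (for (a)) and of the $120$-cell (for (b)), rather than the tautological Tits--Vinberg representation whose target dimension equals the number of generators.

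For (a), the first step is to realize the right-angled $k$-gon group $\Gamma$ (for even $k \geq 6$) as a discrete reflection subgroup $\rho_0 : \Gamma \to \OO(3,1) = \mathrm{Isom}(\HH^3)$, with fundamental domain a right-angled polyhedron $P_{\rho_0} \subset \HH^3$ whose walls are combinatorially arranged as the sides of the $k$-gon. Such polyhedra exist for each even $k \geq 6$ by Andreev's theorem together with the ``doubling'' flexibility afforded by the parity of~$k$. Analogously, for (b), one realizes the $120$-cell reflection group as a reflection group $\rho_0 : \Gamma \to \OO(8,1) = \mathrm{Isom}(\HH^8)$ via an explicit embedding taking $\Gamma$ to the reflection group of a right-angled polytope in $\HH^8$. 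The point of moving from the natural $\OO(4,1)$-representation to $\OO(8,1)$ is that the former is rigid, whereas $\OO(8,1)$ is large enough to accommodate nontrivial deformations, providing room to construct a cocycle.

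In both cases, one then builds a smooth nontrivial path $(\rho_t)_{t\in I}$ in $\Hom(\Gamma, G)$ of faithful discrete reflection representations through $\rho_0$ (with $G = \OO(3,1)$ or $\OO(8,1)$), each representation corresponding to a small deformation of the fundamental polytope. Differentiating yields a $\rho_t$-cocycle
\[
u_t(\gamma) := \left.\tfrac{\D}{\D s}\right|_{s=t} \rho_s(\gamma)\, \rho_t(\gamma)^{-1} \in \g,
\]
and the affine action $(\rho_t, u_t)$ on~$\g$ is the candidate proper action. To establish properness, we appeal to the infinitesimal contraction criterion described in Section~\ref{subsec:outline}. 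Since $G$ has real rank one and acts on a \emph{Riemannian} hyperbolic space $\mathbb{X} = \HH^p$ ($p=3$ or $p=8$), the criterion reduces to producing a $(\rho_t,u_t)$-equivariant vector field $X$ on $\HH^p$ that is uniformly contracting in the Riemannian sense. Such a vector field is defined on the fundamental polytope $P_{\rho_t}$ by an infinitesimal ``wall-pushing'' construction reading off the first-order deformation of the polytope shape, and is then extended equivariantly by the reflection group action, exactly in the spirit of the maps $f$ of Section~\ref{subsec:outline}.

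The main obstacle is to choose the deformation path $(\rho_t)$ so that to first order the walls of $P_{\rho_t}$ are pushed strictly inward, uniformly enough for the associated equivariant vector field on $\HH^p$ to be uniformly contracting. For (a), this reduces to exhibiting a suitable tangent direction in the positive-dimensional moduli space of right-angled hyperbolic polyhedra with $k$ walls in the prescribed combinatorial pattern, which is sufficiently flexible once $k\geq 6$ is even. For (b), the analogous check on the $\OO(8,1)$-deformation space of the $120$-cell group is more delicate: one must explicitly exhibit a first-order deformation direction that uniformly pushes all $120$ walls of $P_{\rho_0}$ inward. Here the strategy is to exploit the large symmetry group of the $120$-cell together with the specific geometric structure of the $\OO(8,1)$-embedding to produce a highly symmetric deformation for which the required uniform inward-pushing property can be verified on a single orbit representative of walls.
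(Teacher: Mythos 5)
Your high-level strategy — deform a reflection representation $\rho_t : \Gamma \to \OO(p',1)$, differentiate to get a cocycle, verify uniform infinitesimal contraction on $\HH^{p'}$ and apply the criterion of Section~\ref{subsec:metric-contract-g} — matches the paper's. But you are missing the essential constructive device, which is the \emph{face-coloring} trick (Proposition~\ref{prop:color}). The paper does not attempt to realize $\Gamma$ directly as a reflection group on $\HH^3$ or $\HH^8$. Instead it starts from the given action on $\HH^2$ (resp.\ $\HH^4$) and \emph{builds} the family $\rho_t : \Gamma \to \OO(p+\col,1)$ explicitly: pick a proper coloring $\sigma : \{1,\dots,k\} \to \{0,\dots,\col\}$ of the faces (adjacent faces get distinct colors), pick vertices $u_0,\dots,u_\col$ of a regular simplex in $\RR^\col$, and set $v_i^t = \big(\cosh(t)\,w_i,\ \sqrt{\col}\,\sinh(t)\,u_{\sigma(i)},\ 1\big)$. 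The orthogonality $\langle u_{\sigma(i)},u_{\sigma(j)}\rangle = -1/\col$ for $\sigma(i)\neq\sigma(j)$ is precisely what makes the right-angle relations $\langle v_i^t, v_j^t\rangle = 0$ persist, and the explicit formula produces the $(\rho_t,\rho_s)$-equivariant map $f_{t,s}$ with Lipschitz constant $\cosh(t)/\cosh(s)$ via a Hilbert-metric comparison (Lemma~\ref{lem:compare}). For (a), $k$ even is needed precisely so that the cycle of sides is $2$-colorable ($\col=1$, target $\OO(3,1)$), not for any ``doubling.'' For (b), the content is an explicit proper $5$-coloring ($\col=4$, target $\OO(8,1)$) of the $120$ dodecahedral cells, obtained by mapping the unit-quaternion cell vectors to $\mathfrak{A}_5$ and recording the image of a fixed symbol.

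The gap in your proposal is at the very first step. For (a) you invoke Andreev's theorem to get a right-angled polyhedron in $\HH^3$ whose $k$ faces are ``combinatorially arranged as the sides of the $k$-gon,'' but Andreev's theorem concerns compact or finite-volume polyhedra with trivalent vertices, and a cyclic face-adjacency pattern with non-adjacent walls disjoint in $\HH^3$ does not satisfy its hypotheses; the polyhedra appearing here (Figure~\ref{rhombus}) are non-compact of infinite volume and come only from the explicit deformation formula. For (b) it is far from clear, without the coloring, that the $120$-cell group embeds at all as a right-angled reflection group in $\HH^8$; and appealing to ``flexibility of the moduli space'' or ``the large symmetry group'' does not substitute for producing the deformation and verifying its contraction. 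Your outline, as written, would not let you construct $\rho_t$ nor verify the Lipschitz constant, which is where the entire substance of the proposition lies.
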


The group $\Gamma$ is virtually the fundamental group of a closed surface of genus $\geq 2$ in (a), and of a closed hyperbolic 4-manifold in (b).
Both examples follow from a general face-coloring method explained in Proposition~\ref{prop:color}.
The baby case of this method (involving a single color) also gives a direct way to construct Margulis spacetimes, see Remark~\ref{rem:colors}.(4).

Whereas the examples of proper affine actions by free groups of Margulis, Drumm, Abels--Margulis--Soifer, and Smilga all relied to some degree on the idea of free groups playing ping pong on~$\RR^n$, 
for Theorem~\ref{thm:proper-action-g} and Proposition~\ref{prop:ex-affine} we rather use a sufficient condition for properness based on the idea of contraction explained in Section~\ref{subsec:outline} (see Propositions~\ref{prop:coarse-proj-g} and~\ref{prop:coarse-proj-g-pq}).
This condition generalizes a properness criterion from \cite{dgk16, dgk-strips, dgk-parab} for actions on $\oo(2,1)\simeq\mathfrak{psl}(2,\RR)$ in terms of uniformly contracting vector fields on~$\HH^2$.
From this properness criterion, the topology of the quotient manifolds (\emph{Margulis spacetimes}) may be read off directly. Similarly here, the properness of the affine actions of Proposition~\ref{prop:ex-affine} will be derived from uniformly contracting vector fields on~$\HH^p$ (for $p = 3, 8$), and again the topology of the quotient manifolds will be clear from our methods (see Remark~\ref{rem:topology}). 
However, the contraction arguments in $\HH^{p,q}$ for the general case of Theorem~\ref{thm:proper-action-g} are too coarse to control the topology of the quotients.

We note that the affine actions in Theorem~\ref{thm:proper-action-g} preserve a nondegenerate symmetric bilinear form on~$\g$, namely the Killing form, of signature given by~\eqref{eqn:sign-Killing} below.
This induces a flat pseudo-Riemannian metric on the quotient manifolds.

%%%%%%%%%%%%%%%%%%%%%%%%%
\subsection{Proper actions on Lie groups} \label{subsec:propLi}

Following \cite{dgk16,dgk-strips}, and in the spirit of Section~\ref{subsec:outline}, we view the proper affine actions on the Lie algebra~$\g$ in Theorem~\ref{thm:proper-action-g} as ``infinitesimal analogues'' of proper actions on the corresponding Lie group~$G$ for the action of $G\times G$ by right-and-left multiplication~\eqref{eqn:aff-group-action}.
We prove the following ``macroscopic version'' of Theorem~\ref{thm:proper-action-g}.

\begin{theorem} \label{thm:proper-action-G}
For any irreducible right-angled Coxeter group~$\Gamma$ on $k$ generators, there exist $p,q\in\NN$ with $p+q+1=k$ and a smooth path $(\rho_t)_{t\in I}$ in $\Hom(\Gamma,G)$ of faithful and discrete representations into $G:=\OO(p,q+1)$ (where $I\neq\varnothing$ is an open interval) such that for any $t\neq s$ in~$I$, the action of $\Gamma$ on~$G$ by right-and-left multiplication via $(\rho_t,\rho_{s})$ is properly discontinuous.
\end{theorem}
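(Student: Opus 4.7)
The plan is to execute the contraction-based strategy of Section~\ref{subsec:outline} in the pseudo-Riemannian symmetric space $\HH^{p,q}$ of $G=\OO(p,q+1)$, paralleling the approach to Theorem~\ref{thm:proper-action-g} but in its macroscopic (rather than infinitesimal) formulation. I would reuse the same smooth path $(\rho_t)_{t\in I}$ of Tits--Vinberg reflection representations: by construction, as $t$ varies in~$I$, the walls of the fundamental polytope $P_{\rho_t}\subset\HH^{p,q}$ move in a monotone fashion whose derivative realises the uniformly contracting vector field used for Theorem~\ref{thm:proper-action-g}. The goal is then to integrate this monotonicity: for $t\neq s$ in~$I$, the polytope $P_{\rho_s}$ should be a ``contracted'' image of~$P_{\rho_t}$, giving rise to a $(\rho_t,\rho_s)$-equivariant self-map of $\HH^{p,q}$ that is uniformly contracting in an appropriate pseudo-Riemannian sense.

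Concretely, I would construct $f_{t,s}:\HH^{p,q}\to\HH^{p,q}$ by choosing a projective identification $P_{\rho_t}\to P_{\rho_s}$ sending walls to corresponding walls, and extending equivariantly by the action of $\rho_t(\Gamma)$ on the source and of $\rho_s(\Gamma)$ on the target, as in point~(ii) of Section~\ref{subsec:outline}. The key local computation is that $f_{t,s}$ strictly decreases the cross-ratio pseudo-distance by a uniform factor $<1$ along spacelike directions at every point of~$P_{\rho_t}$; since the cross-ratio admits explicit formulas and $P_{\rho_t}$ is compact modulo~$\Gamma$, this reduces to a finite combinatorial check about how the walls of~$P_{\rho_s}$ lie inside those of~$P_{\rho_t}$, and should follow from the same nesting which underlies the infinitesimal argument.

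The main obstacle is that $\HH^{p,q}$ is genuinely pseudo-Riemannian: no self-map can be Lipschitz-contracting for any honest metric, and points at arbitrarily large timelike ``distance'' always exist. The resolution is Lemma~\ref{lem:spacelike-Gamma-orbit}, which asserts that $\rho_t(\Gamma)$-orbits escape to infinity mostly in spacelike directions, where the geometry of~$\HH^{p,q}$ resembles that of $\HH^p$. Combining this escape statement with the spacelike contraction of $f_{t,s}$, one can mimic the $\OO(p,1)$ argument sketched in Section~\ref{subsec:outline}: if a sequence $\rho_s(\gamma_n)\,h_n\,\rho_t(\gamma_n)^{-1}$ stayed bounded in~$G$ for a bounded sequence $h_n\in G$, then the fixed-point map $h\mapsto\mathrm{Fix}(h^{-1}\circ f_{t,s})$, well defined on a sufficiently large region of~$G$ thanks to the spacelike contraction, would produce a bounded $\rho_t(\Gamma)$-orbit in the spacelike-large regime of $\HH^{p,q}$, forcing the $\gamma_n$ into a finite subset of~$\Gamma$. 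The technical heart of the proof is thus the pseudo-Riemannian contraction estimate on~$f_{t,s}$, packaged with Lemma~\ref{lem:spacelike-Gamma-orbit} to lift proper discontinuity from $\HH^{p,q}$ up to the Lie group~$G$.
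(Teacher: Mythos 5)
Your proposal correctly reproduces the construction of the equivariant maps $f_{t,s}$ via Tits--Vinberg fundamental polytopes, the local cross-ratio contraction estimate, and the role of Lemma~\ref{lem:spacelike-Gamma-orbit} in controlling escape to infinity through spacelike directions. However, there is a genuine gap at the final properness step, and it is precisely the place where the macroscopic case diverges from the infinitesimal one.

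You propose to conclude by mimicking the argument of Proposition~\ref{prop:coarse-proj-G} directly inside~$\HH^{p,q}$: build a fixed-point or coarse-projection map $\Pi\colon G\to\mathcal{F}(\mathcal{O})$ from $g\mapsto\{x : d_{\HH^{p,q}}(g\cdot x, f_{t,s}(x)) \text{ minimal}\}$ (or $g\mapsto\mathrm{Fix}(g^{-1}\circ f_{t,s})$), and use it to push proper discontinuity from $\HH^{p,q}$ up to~$G$. This step fails: the estimate~\eqref{eqn:dist-gx-fx} on which Proposition~\ref{prop:coarse-proj-G} rests is a chain of triangle inequalities, and $d_{\HH^{p,q}}$ is not a metric for $q\geq 1$ --- the triangle inequality does not hold, many pairs of points have pseudo-distance zero, and the function $x\mapsto d_{\HH^{p,q}}(g\cdot x,f_{t,s}(x))$ has no reason to be proper or to attain a minimum on a compact set. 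Likewise the fixed-point map $g\mapsto\mathrm{Fix}(g^{-1}\circ f_{t,s})$ is not available, since spacelike contraction says nothing about timelike directions, where $g^{-1}\circ f_{t,s}$ can expand freely; there is no Banach fixed-point theorem in a pseudo-Riemannian setting. The paper explicitly records (in the remark closing Section~\ref{subsec:proof-pseudo-Riem-contract-proper-G}) that the authors were unable to build a well-behaved projection $G\to\mathcal{F}(\mathcal{O})$ in this way, even though the analogous thing does work in the infinitesimal case via Proposition~\ref{prop:coarse-proj-g-pq}, because the derivative-of-distance formula (Proposition~\ref{prop:deriv-dHpq}) used there does not require a triangle inequality.

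What the paper actually does for the macroscopic case is different and bypasses this obstacle: Proposition~\ref{prop:HtoX-contraction} shows that coarse uniform contraction in spacelike directions with respect to $(\rho,\Omega)\subset\HH^{p,q}$ implies coarse uniform contraction with respect to the genuine (Finsler) metric $d_{\mathbb{X}}(g\cdot x_0,g'\cdot x_0)=\mu_1(g^{-1}g')$ on the Riemannian symmetric space $\mathbb{X}=G/(\OO(p)\times\OO(q+1))$, to which Proposition~\ref{prop:coarse-proj-G} then applies directly. The transfer from $\HH^{p,q}$ to $\mathbb{X}$ goes through eigenvalue and singular value estimates: Lemma~\ref{lem:orbit-growth} relates $d_{\HH^{p,q}}$-growth along orbits to $\lambda_1$; spacelike contraction combined with Lemma~\ref{lem:spacelike-Gamma-orbit} yields $\lambda_1(\rho'(\gamma))\leq C\,\lambda_1(\rho(\gamma))$ for $\rho'(\gamma)$ proximal; and Fact~\ref{fact:AMS} (Abels--Margulis--Soifer/Benoist), applicable by the strong irreducibility established in Proposition~\ref{prop:strong-irred}, upgrades this $\lambda_1$-estimate to a coarse $\mu_1$-estimate, which is exactly coarse contraction for $d_{\mathbb{X}}$. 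Your proposal is missing this detour through $\mathbb{X}$ and the $\lambda_1/\mu_1$ machinery, and without it the final implication from spacelike contraction to proper discontinuity on $G$ is not established.
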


In general, it is easy to obtain proper actions on~$G$ by right-and-left multiplication by considering a discrete group~$\Gamma$, a representation $\rho\in\Hom(\Gamma,G)$ with finite kernel and discrete image, and a representation $\rho'\in\Hom(\Gamma,G)$ with bounded image (for instance the constant representation, with image $\{e\}\subset G$): such proper actions are often called \emph{standard}.
The point of Theorem~\ref{thm:proper-action-G} is to build nonstandard proper actions on~$G$, where both factors are faithful and discrete --- and in fact, can be arbitrarily close to each other.

We also construct examples of proper actions on $G=\OO(p,q+1)$ where $p+q+1$ is smaller than the number $k$ of generators of~$\Gamma$, in the same cases as for Proposition~\ref{prop:ex-affine}.

\begin{proposition} \label{prop:ex-G}
(a) For any even $k\geq 6$, the group $\Gamma$ generated by reflections in the sides of a convex right-angled $k$-gon in $\HH^2$ admits proper actions on $G=\OO(3,1)$ by right-and-left multiplication via pairs $(\rho,\rho')\in\Hom(\Gamma,G)^2$ with $\rho,\rho'$ both faithful and discrete.

(b) The group $\Gamma$ generated by reflections in the faces of a $4$-dimensional regular right-angled $120$-cell admits proper actions on $G=\OO(8,1)$ by right-and-left multiplication via pairs $(\rho,\rho')\in\Hom(\Gamma,G)^2$ with $\rho,\rho'$ both faithful and discrete.
\end{proposition}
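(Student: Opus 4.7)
The plan is to prove Proposition~\ref{prop:ex-G} as a macroscopic analogue of Proposition~\ref{prop:ex-affine}, working in the Riemannian symmetric space $\mathbb{X} = \HH^p$ of $G = \OO(p,1)$ (with $p = 3$ in case (a) and $p = 8$ in case (b)). Instead of building a uniformly contracting vector field on $\HH^p$ as in Proposition~\ref{prop:ex-affine}, I would build a uniformly contracting $(\rho_t,\rho_s)$-equivariant map $f_{t,s} : \HH^p \to \HH^p$ for $s,t$ close in the deformation parameter, and then invoke the group-level contraction principle of Section~\ref{subsec:metric-contract-G} to pass from properness on $\HH^p$ to properness of the right-and-left action on $G$.

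First, I would use the face-coloring method of Proposition~\ref{prop:color} --- the same ingredient that powers Proposition~\ref{prop:ex-affine} --- to realize $\Gamma$ as a discrete reflection group $\rho_0 : \Gamma \hookrightarrow \OO(p,1)$ with a smooth deformation family $(\rho_t)_{t\in I}$ of faithful and discrete representations, each $\rho_t$ preserving a fundamental polytope $P_{\rho_t} \subset \HH^p$ of the same combinatorial type. In case~(a), $P_{\rho_t}$ is a right-angled polytope of $\HH^3$ whose walls come from extending the sides of the $k$-gon to totally geodesic hyperplanes and then tilting them according to the two color classes available when $k$ is even. In case~(b), one uses the analogous coloring of the $120$ faces of the right-angled $120$-cell, realized inside $\HH^8$ as specified by Proposition~\ref{prop:color}.

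For $t,s\in I$, I would then construct a piecewise-projective $(\rho_t,\rho_s)$-equivariant map $f_{t,s} : \HH^p \to \HH^p$ by sending $P_{\rho_t}$ to $P_{\rho_s}$ wall-to-wall via the natural projective identification and extending across the wall reflections by $\Gamma$-equivariance. By the infinitesimal content of Proposition~\ref{prop:ex-affine}, the derivative $\frac{\D}{\D s}\big|_{s=t} f_{t,s}$ is (or can be chosen to be) a uniformly contracting vector field on $\HH^p$ --- this is precisely the vector field driving properness in the Lie-algebra version. Hence, for $s$ sufficiently close to~$t$, the map $f_{t,s}$ itself has Lipschitz constant strictly less than~$1$, uniformly on $\HH^p$. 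The $\Gamma$-equivariance combined with uniform control on the single polytope $P_{\rho_t}$ is what upgrades local contraction on $P_{\rho_t}$ to a global Lipschitz bound.

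Finally, applying the general principle of Section~\ref{subsec:metric-contract-G} to $f_{t,s}$ gives properness of the $(\rho_t,\rho_s)$-action on $G = \OO(p,1)$: the fixed-point map $g \mapsto \mathrm{Fix}(g^{-1} \circ f_{t,s}) \in \HH^p$ is a well-defined, $\Gamma$-equivariant projection, so the known properness of the $\rho_t$-action on $\HH^p$ lifts to properness on $G$. Both $\rho_t$ and $\rho_s$ are faithful and discrete by construction. The main obstacle I anticipate is the precise verification that the deformation directions furnished by Proposition~\ref{prop:color} produce an infinitesimally contracting equivariant vector field on all of $\HH^p$ (and not merely near a single wall); once this Riemannian Lipschitz comparison --- the higher-dimensional analogue of the contraction estimates from the Margulis-spacetime theory in $\HH^2$ --- is secured, the integration to a macroscopic contraction and the passage to properness on $G$ are routine consequences of the framework already in place.
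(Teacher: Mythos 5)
Your high-level plan is the same as the paper's: realize $\Gamma$ as a convex cocompact reflection group in $\OO(p,1)$ (with $p=3$ via a $2$-coloring of the $k$-gon sides, $p=8$ via a $5$-coloring of the $120$-cell faces), produce a deformation family $(\rho_t)$ together with piecewise-projective $(\rho_t,\rho_s)$-equivariant maps $f_{t,s}:\HH^p\to\HH^p$ that push a fundamental polytope inward wall-to-wall, and then apply the contraction-to-properness principle of Section~\ref{subsec:metric-contract-G}. But your route to the crucial Lipschitz bound has a genuine gap, and the direction of the argument is inverted relative to the paper.

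You propose to first secure the \emph{infinitesimal} contraction (the uniformly $\kap$-lipschitz vector field underlying Proposition~\ref{prop:ex-affine}) and then conclude that ``for $s$ sufficiently close to $t$, the map $f_{t,s}$ itself has Lipschitz constant strictly less than $1$, uniformly on $\HH^p$.'' This step does not follow from the lipschitz property of $\frac{\D}{\D s}\big|_{s=t} f_{t,s}$ alone: $\HH^p$ is noncompact and the fundamental polytope $P_{\rho_t}\cap\HH^p$ reaches $\partial\HH^p$, so a derivative bound at the single parameter value $s=t$ gives no uniform macroscopic estimate. To make an integration argument rigorous you would need the vector fields $Z_\tau=\frac{\D}{\D\varsigma}\big|_{\varsigma=\tau} f_{\tau,\varsigma}$ to be $\kap$-lipschitz uniformly in $\tau\in[t,s]$ and then to integrate along orbits, essentially reproducing Proposition~\ref{prop:RACG-maps-contract}(b) of the pseudo-Riemannian section --- and even that only delivers a \emph{coarse} Lipschitz bound on an orbit. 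The paper instead establishes the macroscopic bound \emph{directly}: Proposition~\ref{prop:color} exhibits $f_{t,s}$, in the affine chart, as the linear map $\frac{\cosh t}{\cosh s}\mathrm{Id}_{\RR^p}\oplus\frac{\sinh t}{\sinh s}\mathrm{Id}_{\RR^\col}$ on the fundamental polytope, which sends the unit ball $\HH^{p+\col}$ into the ball of Euclidean radius $\frac{\cosh t}{\cosh s}<1$; Lemma~\ref{lem:compare} then gives the exact Hilbert-metric Lipschitz constant $\frac{\cosh t}{\cosh s}$, valid for \emph{all} $0<t\leq s$ and on all of $\HH^{p+\col}$, with no infinitesimal detour. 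The infinitesimal result (Proposition~\ref{prop:ex-affine}) is then \emph{derived} from this by differentiating via Lemma~\ref{lem:derivate}, the opposite of what you propose. You flag this Lipschitz estimate as ``the main obstacle'' --- correctly --- but the resolution you sketch (deduce it from the infinitesimal contraction) would be both logically circular in the paper's development and technically incomplete; the missing ingredient is the elementary Hilbert-metric comparison of Lemma~\ref{lem:compare}, which makes the macroscopic claim the easier and more primary one.
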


Full properness criteria for proper actions on $\OO(n,1)$ via $(\rho,\rho')$ with $\rho$ geometrically finite were provided in \cite{kasPhD,gk17} in terms of uniform contraction in~$\HH^n$ (see 
Remark~\ref{rem:converse-Riem}).

\begin{remark}
For $p\geq 1$, the group $G=\OO(p,q+1)$ has four connected components.
The proper actions on~$G$ constructed in Theorem~\ref{thm:proper-action-G} and Proposition~\ref{prop:ex-G} all yield proper actions on the identity component~$G_0$.
\end{remark}

For $p=2$ and $q=0$, the identity component $G_0 = \OO(2,1)_0$ is the so-called \emph{anti-de Sitter} $3$-space $\mathrm{AdS}^3$, a Lorentzian analogue of~$\HH^3$.
The group of orientation-preserving isometries of $\mathrm{AdS}^3$ identifies with the quotient of the four diagonal components of $G\times G$ by $\{(\mathrm{Id}, \mathrm{Id}),(-\mathrm{Id},-\mathrm{Id})\}$, acting on~$G_0$ by right-and-left multiplication.
Many examples of proper actions on $\mathrm{AdS}^3$ were constructed since the 1980s, see in particular \cite{kr85,sal00,kasPhD,gk17,gkw15,dt16}.

Examples of nonstandard cocompact proper actions on $\OO(n,1)$ by right-and-left multiplication for $n>2$ were constructed in \cite{ghy95,kob98}, using deformation techniques.
After we announced the results of this paper, Lakeland--Leininger \cite{ll17} found examples of nonstandard cocompact proper actions on $\OO(3,1)$ and $\OO(4,1)$ by right-angled Coxeter groups which cannot be obtained from standard proper actions by deformation.
Note that for cocompact proper actions on $\OO(n,1)$ by right-and-left multiplication via $(\rho,\rho')$, exactly one of $\rho$ or~$\rho'$ has finite kernel and discrete image \cite{kas08,gk17}.
On the other hand, in the noncompact proper actions that we construct in Theorem~\ref{thm:proper-action-G} and Proposition~\ref{prop:ex-G}, both $\rho,\rho'$ have finite kernel and discrete image.

%%%%%%%%%%%%%%%%%%%%%%%%%
\subsection{Plan of the paper}

In Section~\ref{sec:prelim} we recall some background on pseudo-Riemannian hyperbolic spaces $\HH^{p,q}$.
In Section~\ref{sec:metric-contraction} we state and prove some sufficient criteria for properness, expressed in terms of uniform contraction in metric spaces (Propositions~\ref{prop:coarse-proj-G} and~\ref{prop:coarse-proj-g}).
In Section~\ref{sec:riem-ex} we give examples in $\HH^3$ and~$\HH^8$, establishing Propositions \ref{prop:ex-affine} and~\ref{prop:ex-G}.
In Section~\ref{sec:pseudo-Riem-contraction} we 
state and prove analogous criteria for general $\HH^{p,q}$ (Theorem~\ref{thm:contract-proper}).
In Section~\ref{sec:space-contr-Coxeter} we prove Theorems~\ref{thm:proper-action-g} (hence also~\ref{thm:main}) and~\ref{thm:proper-action-G}  by constructing appropriate families of representations $(\rho_t)_{t\in I}$ to which Theorem~\ref{thm:contract-proper} applies.

%%%%%%%%%%%%%%%%%%%%%%%%%
\subsection*{Acknowledgements}

We would like to thank Ian Agol, Yves Benoist, Suhyoung Choi, Bill Goldman, Gye-Seon Lee, Ludovic Marquis, Vivien Ripoll, and Anna Wienhard for interesting discussions.
We also thank Piotr\linebreak Przytycki for providing several references, Dani Wise for pointing out \cite[Problem\,13.47]{wis14} and encouraging us, and both referees for making many constructive remarks.
We gratefully acknowledge the hospitality and excellent working conditions provided by the MSRI in Berkeley, where most of this work was done in the Spring 2015, the CNRS-Pauli Institute in Vienna, and the IHES in Bures-sur-Yvette.

%%%%%%%%%%%%%%%%%%%%%%%%%%%%%%%%%%%%%%%%%%%%%%%%%%%
\section{Notation and reminders}\label{sec:prelim}

In this section we set up some notation and recall a few definitions and basic facts on properly convex domains in projective space, on the pseudo-Riemannian hyperbolic spaces $\HH^{p,q}$, and on eigenvalues and principal values of elements of $\GL(\RR^{p+q+1})$.

%%%%%%%%%%%%%%%%%%%%%%%%%
\subsection{Properly convex domains in projective space} \label{subsec:prop-conv}

Let $V$ be a real vector space of dimension $\geq 2$.
Recall that an open subset $\Omega$ of $\PP(V)$ is said to be \emph{properly convex} if it is convex and bounded in some affine chart of $\PP(V)$.
There is a natural metric $d_{\Omega}$ on~$\Omega$, the \emph{Hilbert metric}:
$$d_{\Omega}(x,y) := \frac{1}{2} \log \, [a,x,y,b]$$
for all distinct $x,y\in\Omega$, where $[\cdot,\cdot,\cdot,\cdot]$ is 
the $\PP^1(\RR)$-valued cross-ratio on a projective line, normalized so that $[0,1,t,\infty]=t$, and where $a,b$ are the intersection points of $\partial\Omega$ with the projective line through $x$ and~$y$, with $a,x,y,b$ in this order.
The metric space $(\Omega,d_{\Omega})$ is proper (\ie closed balls are compact) and complete.

The group
$\mathrm{Aut}(\Omega) := \{g\in\PGL(V) ~|~ g\cdot\Omega=\Omega\}$
acts on~$\Omega$ by isometries for~$d_{\Omega}$.
As a consequence, any discrete subgroup of $\mathrm{Aut}(\Omega)$ acts properly discontinuously on~$\Omega$.

Let $\widetilde{\Omega}\subset \RR^k$ be a convex open cone lifting~$\Omega$.
There is a unique lift of $\mathrm{Aut}(\Omega)$ to $\SL^\pm(V)$ that preserves $\widetilde{\Omega}$.
The \emph{dual convex cone} of~$\widetilde{\Omega}$ is by definition
$$\widetilde{\Omega}^* := \big\{ \varphi \in V^* ~|~ \varphi(v)<0\quad \forall v\in\overline{\widetilde{\Omega}}\big\},$$
where $\overline{\widetilde{\Omega}}$ is the closure of $\widetilde{\Omega}$ in $V\smallsetminus \{0\}$.
The image $\Omega^*$ of $\widetilde{\Omega}^*$ in $\PP(V^*)$ does not depend on the chosen lift $\widetilde{\Omega}$; it is a nonempty properly convex open subset of $\PP(V^*)$, called the \emph{dual convex set} of~$\Omega$, and preserved by the dual action of $\mathrm{Aut}(\Omega)$ on $\PP(V^*)$.
We can use any nondegenerate symmetric bilinear form $\langle\cdot,\cdot\rangle$ on~$V$ to view $\widetilde{\Omega}^*$ and $\Omega^*$ as subsets of $V$ and $\PP(V)$ respectively:
\begin{equation} \label{eqn:dual-in-P(V)}
\widetilde{\Omega}^* \simeq \big \{ w\in V ~|~ \langle w,v\rangle<0 \quad\forall v\in\overline{\widetilde{\Omega}}\big \} \quad\mathrm{and}\quad \Omega^*:=\PP(\widetilde{\Omega}^*).
\end{equation}

\begin{remark} \label{rem:Hilb-metric-include}
It follows from the definition that if $\Omega'\subset \Omega$ are nonempty properly convex open subsets of $\PP(\RR^{n+1})$, then the corresponding Hilbert metrics satisfy $d_{\Omega'}(x,y) \geq d_{\Omega}(x,y)$ for all $x,y\in\Omega'$.
When $\Omega$ is an ellipsoid, $(\Omega, d_\Omega)$ is isometric to the real hyperbolic space.
\end{remark}

%%%%%%%%%%%%%%%%%%%%%%%%%
\subsection{The pseudo-Riemannian space $\HH^{p,q}$} \label{subsec:prelim-Hpq}

For $p,q\in\NN$ with $p\geq 1$, let $\RR^{p,q+1}$ be $\RR^{p+q+1}$ endowed with a symmetric bilinear form $\langle \cdot,\cdot\rangle_{p,q+1}$ of signature $(p,q+\nolinebreak 1)$.
We set
$$\HH^{p,q} := \left \{ [v]\in\PP(\RR^{p,q+1}) ~\middle |~ \langle v,v\rangle_{p,q+1} < 0 \right \} .$$
The form $\langle \cdot ,\cdot \rangle_{p,q+1}$ induces a pseudo-Riemannian metric $\mathsf{g}^{p,q}$ of signature $(p,q)$ on $\HH^{p,q}$.
Explicitly, the metric $\mathsf{g}^{p,q}$ at a point $[v]$ is obtained from the restriction of $\langle \cdot,\cdot\rangle_{p,q+1}$ to the tangent space at $v/\sqrt{-\langle v,v\rangle_{p,q+1}}$ of the hypersurface
$$\widehat{\HH}^{p,q} := \big\{ v\in\RR^{p,q+1} ~|~ \langle v,v\rangle_{p,q+1} = -1\big\} ,$$
a double cover of $\HH^{p,q}$ with covering group $\{\mathrm{Id}, -\mathrm{Id}\}$.
The sectional curvature of $\mathsf{g}^{p,q}$ is constant negative, hence $\HH^{p,q}$ can be thought of as a pseudo-Rieman\-nian analogue of the real hyperbolic space $\HH^p = \HH^{p,0}$ in signature $(p,q)$.

The isometry group of the pseudo-Riemannian space $\HH^{p,q}$ is $\mathrm{PO}(p,q+\nolinebreak 1)=\OO(p,q+1)/\{\mathrm{Id}, -\mathrm{Id}\}$.
The point stabilizers are conjugate to $\OO(p,q)$, hence $\HH^{p,q} \simeq \mathrm{PO}(p,q+1)/\OO(p,q)$.

The set $\HH^p = \HH^{p,0}$ is a properly convex open subset of $\PP(\RR^{p,1})$, and the Hilbert metric $d_{\HH^p}$ on~$\HH^p$ coincides with the standard hyperbolic metric.
On the other hand, for $q\geq 1$ the space $\HH^{p,q}$ is \emph{not} convex in $\PP(\RR^{p,q+1})$.
The boundary of $\HH^{p,q}$ in $\PP(\RR^{p,q+1})$, given by
$$\partial\HH^{p,q} = \big\{ [v]\in\PP(\RR^{p,q+1}) ~|~ \langle v,v\rangle_{p,q+1} = 0 \big\},$$
is a quadric which, in any Euclidean chart of $\PP(\RR^{p,q+1})$, has $p-1$ positive and $q$ negative principal curvature directions at each point.

Consider $x\in\HH^{p,q}$ lifting to $\widehat{x}\in\widehat{\HH}^{p,q}$.
A nonzero vector $V_x\in T_x\HH^{p,q}$ and the geodesic line $\mathcal{L}$ it generates are called \emph{spacelike} (\resp \emph{lightlike}, \resp \emph{timelike}) if $\mathsf{g}^{p,q}_x(V_x,V_x)$ is positive (\resp zero, \resp negative).
The line $\mathcal{L}$ is then the intersection of $\HH^{p,q}$ with a projective line meeting $\partial\HH^{p,q}$ in two (\resp one, \resp zero) points: see Figure~\ref{fig:H123}.
For instance, if $V_x\in T_x\HH^{p,q} \simeq T_{\widehat{x}} \widehat{\HH}^{p,q} \simeq \widehat{x}^\perp\subset\RR^{p,q+1}$ satisfies $\langle V_x,V_x\rangle_{p,q+1}=1$, then $\mathcal{L}$ is spacelike, with unit-speed parametrization
\begin{equation} \label{eqn:param-geod}
t\longmapsto \exp_x(t V_x) = [\cosh (t) \, \widehat{x}+\sinh (t) \, V_x] \in \HH^{p,q}.
\end{equation}
In general, the totally geodesic subspaces of $\HH^{p,q}$ are exactly the intersections of $\HH^{p,q}$ with projective subspaces of $\PP(\RR^{p,q+1})$.
\begin{figure}[h!]
\centering
\labellist
\small\hair 2pt
\pinlabel $\HH^{3,0}$ [u] at 50 35
\pinlabel $\HH^{2,1}$ [u] at 207 35
\pinlabel $\ell$ [u] at 50 87
\pinlabel $\ell_2$ [u] at 207 52
\pinlabel $\ell_0$ [u] at 219 90 
\pinlabel $\ell_1$ [u] at 197 90
\endlabellist
\includegraphics[scale=1]{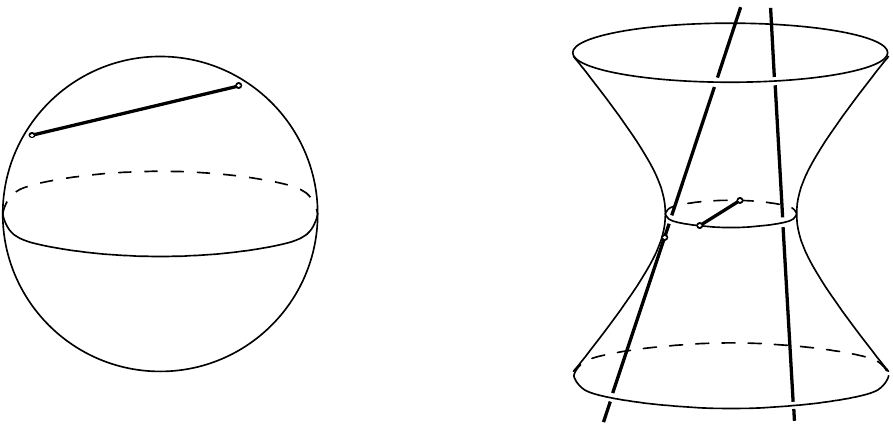}
\caption{Left: $\HH^3=\HH^{3,0}$ with a geodesic line $\ell$ (necessarily spacelike). Right: $\HH^{2,1}$ with three geodesic lines $\ell_2$ (spacelike), $\ell_1$ (lightlike), and $\ell_0$ (timelike).}
\label{fig:H123}
\end{figure}
As in \cite{gm16}, we shall use the following convention.

\begin{notation} \label{not:d-Hpq}
If $x,y\in\HH^{p,q}$ are distinct points belonging to a \emph{spacelike} line, we denote by $d_{\HH^{p,q}}(x,y) > 0$ the pseudo-Riemannian distance between $x$ and~$y$, obtained by integrating $\sqrt{\mathsf{g}^{p,q}}$ over the geodesic path from $x$ to~$y$.
If $x,y\in\HH^{p,q}$ are equal or belong to a lightlike or timelike line, we set $d_{\HH^{p,q}}(x,y) := 0$.
\end{notation}

Using~\eqref{eqn:param-geod}, we see that for any distinct points $x, y \in \HH^{p,q}$ lying on a spacelike line $\mathcal{L}$,
\begin{equation} \label{eqn:d-Hpq-inner-prod}
d_{\HH^{p,q}}(x,y) = \mathrm{arccosh}\,|\langle\widehat{x},\widehat{y}\rangle_{p,q+1}| > 0
\end{equation}
where $\widehat{x},\widehat{y}\in\widehat{\HH}^{p,q}$ are respective lifts of $x,y$.
The following Hilbert geometry interpretation, well-known in the $\HH^p$ setting, also holds in $\HH^{p,q}$ because $\mathcal{L}$ is a copy of $\HH^1$: normalizing the cross-ratio $[\cdot,\cdot,\cdot,\cdot]$ as in Section~\ref{subsec:prop-conv},
\begin{equation} \label{eqn:d-Hpq-cr}
d_{\HH^{p,q}}(x,y) = \frac{1}{2}\log\,[a,x,y,b] > 0
\end{equation}
where $a,b$ are the two intersection points of $\partial\HH^{p,q}$ with the projective line through $x$ and~$y$, with $a,x,y,b$ in this order.
The following (see Figure~\ref{fig:tang-vect}) is a pseudo-Riemannian analogue of the \emph{first variation formula} in Riemannian geometry.

\begin{proposition} \label{prop:deriv-dHpq}
For any $x,y\in\HH^{p,q}$ on a spacelike line and for any tangent vectors $Z_x\in T_x\HH^{p,q}$ and $Z_y\in T_y\HH^{p,q}$,
$$ \frac{\D}{\D t}\Big|_{t=0} \, d_{\HH^{p,q}} \big(\exp_x(tZ_x),\, \exp_y(tZ_y)\big )  = - \mathsf{g}^{p,q}_x \big (Z_x,V_x^y \big) - \mathsf{g}^{p,q}_y \big( Z_y,V_y^x \big),$$ 
where $V_z^{z'}\in T_z^{+1} \HH^{p,q}$ is the unit vector at $z$ pointing to~$z'$.
\end{proposition}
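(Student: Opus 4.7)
\medskip

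\noindent\textbf{Proof plan.} The strategy is to differentiate the explicit formula \eqref{eqn:d-Hpq-inner-prod} in lifted coordinates on $\widehat{\HH}^{p,q}\subset\RR^{p,q+1}$, and then to recognize the resulting expression as the right-hand side of the claimed identity by writing $V_x^y$ in terms of $\widehat x,\widehat y$ via~\eqref{eqn:param-geod}.

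First I would set up the lifts. Since the spacelike condition and the condition $|\langle\widehat x,\widehat y\rangle_{p,q+1}|>1$ are open, for $|t|$ sufficiently small the points $x_t:=\exp_x(tZ_x)$ and $y_t:=\exp_y(tZ_y)$ still lie on a spacelike line, and we may choose smooth lifts $\widehat{x}(t),\widehat{y}(t)\in\widehat{\HH}^{p,q}$ with $\widehat x(0)=\widehat x$, $\widehat y(0)=\widehat y$, $\langle\widehat x,\widehat y\rangle_{p,q+1}=-\cosh d$, where $d:=d_{\HH^{p,q}}(x,y)$. Under the identification $T_x\HH^{p,q}\simeq\widehat x^\perp\subset\RR^{p,q+1}$, the defining property of $\exp_x$ gives $\widehat{x}'(0)=Z_x$, and similarly $\widehat{y}'(0)=Z_y$; in particular $\langle Z_x,\widehat x\rangle_{p,q+1}=\langle Z_y,\widehat y\rangle_{p,q+1}=0$.

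Next I would differentiate. Since $\langle \widehat{x}(t),\widehat{y}(t)\rangle_{p,q+1}$ is negative with absolute value $>1$ for small $t$, formula~\eqref{eqn:d-Hpq-inner-prod} becomes $d_{\HH^{p,q}}(x_t,y_t)=\mathrm{arccosh}\bigl(-\langle\widehat{x}(t),\widehat{y}(t)\rangle_{p,q+1}\bigr)$. A direct computation yields
\begin{equation*}
\frac{\D}{\D t}\Big|_{t=0}d_{\HH^{p,q}}(x_t,y_t)=\frac{-\bigl(\langle Z_x,\widehat y\rangle_{p,q+1}+\langle\widehat x,Z_y\rangle_{p,q+1}\bigr)}{\sqrt{\langle\widehat x,\widehat y\rangle_{p,q+1}^{\,2}-1}}=\frac{-\bigl(\langle Z_x,\widehat y\rangle_{p,q+1}+\langle\widehat x,Z_y\rangle_{p,q+1}\bigr)}{\sinh d}.
\end{equation*}

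Finally I would identify the two terms geometrically. Setting $t=d$ in~\eqref{eqn:param-geod} with $V_x=V_x^y$ gives $\widehat y=\cosh(d)\widehat x+\sinh(d)V_x^y$, hence
\begin{equation*}
V_x^y=\frac{\widehat y-\cosh(d)\widehat x}{\sinh d}.
\end{equation*}
Pairing with $Z_x$ and using $\langle Z_x,\widehat x\rangle_{p,q+1}=0$, we get $\mathsf{g}^{p,q}_x(Z_x,V_x^y)=\langle Z_x,V_x^y\rangle_{p,q+1}=\langle Z_x,\widehat y\rangle_{p,q+1}/\sinh d$, and symmetrically for the $y$-term. Substituting into the formula above gives the claimed identity.

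There is no real obstacle: the only subtle points are to fix the sign of the lifts so that $\langle\widehat x,\widehat y\rangle_{p,q+1}=-\cosh d$ (rather than $+\cosh d$), and to note that the openness of the spacelike condition lets one differentiate at $t=0$ using the same formula~\eqref{eqn:d-Hpq-inner-prod} throughout. The rest is a one-line computation.
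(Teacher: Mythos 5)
Your proof is correct and takes essentially the same approach as the paper: lift to $\widehat{\HH}^{p,q}$, differentiate the inner-product formula \eqref{eqn:d-Hpq-inner-prod}, and use the geodesic parametrization \eqref{eqn:param-geod} to re-express the result in terms of $V_x^y$ and $V_y^x$. The only cosmetic difference is that you solve for $V_x^y$ in terms of $\widehat x,\widehat y$ and substitute at the end, whereas the paper substitutes $\widehat y=\cosh(\delta)\widehat x+\sinh(\delta)V_x^y$ into the differentiated pairing and cancels $\sinh\delta$ from both sides; the algebra is identical.
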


\begin{proof}
Let $\widehat{x}, \widehat{y} \in \widehat{\HH}^{p,q}$ be respective lifts of $x,y$ with $\langle\widehat{x},\widehat{x}\rangle_{p,q+1} = \langle\widehat{y},\widehat{y}\rangle_{p,q+1}\linebreak = -1$ and $\langle\widehat{x},\widehat{y}\rangle_{p,q+1} < 0$.
We view $Z_x$ and~$V_x^y$ as vectors of $\RR^{p,q+1}$ via the canonical identifications $T_x\HH^{p,q} \simeq T_{\widehat{x}}\widehat{\HH}^{p,q} \simeq \widehat{x}^{\perp} \subset \RR^{p,q+1}$, and similarly for $Z_y$ and~$V_y^x$.
The vectors $V_x^y$ and~$V_y^x$ are unit spacelike.
Let $\delta:=d_{\HH^{p,q}}(x,y)>\nolinebreak 0$.
By~\eqref{eqn:param-geod} we may write $\widehat{y} = \cosh(\delta)\,\widehat{x} + \sinh(\delta)\,V_x^y$ and $\widehat{x} = \cosh(\delta)\,\widehat{y} + \sinh(\delta)\,V_y^x$, and $\sinh(\delta)=\sqrt{\langle\widehat{x},\widehat{y}\rangle_{p,q+1}^2-1}$ by \eqref{eqn:d-Hpq-inner-prod}.

For $t\in\RR$, the point $\exp_x(t Z_x)\in\HH^{p,q}$ of \eqref{eqn:param-geod} lifts to the vector $\cosh t \widehat{x} + \sinh t Z_x$ in $\widehat{\HH}^{p,q} \subset \RR^{p,q+1}$ and similarly for $\exp_y(t Z_y)$.
Then \eqref{eqn:d-Hpq-inner-prod} yields
\begin{align*}
& \sinh(\delta)\frac{\D}{\D t}\Big|_{t=0} d_{\HH^{p,q}} \big (\exp_x(tZ_x),\, \exp_y(tZ_y)\big )\\ &= \frac{\D}{\D t}\Big|_{t=0} \cosh\big( d_{\HH^{p,q}} \big (\exp_x(tZ_x),\, \exp_y(tZ_y)\big ) \big)\\ 
& = \frac{\D}{\D t}\Big|_{t=0} -\langle \cosh(t) \ \widehat{x} + \sinh t Z_x, \cosh(t) \ \widehat{y} + \sinh(t) Z_y \rangle_{p,q+1}\\
& = -\langle Z_x, \widehat{y} \rangle - \langle \widehat{x}, Z_y \rangle_{p,q+1} \\
& =  -\langle Z_x, \cosh(\delta) \,\widehat{x} + \sinh (\delta) V_x^y \rangle_{p,q+1} - \langle \cosh(\delta) \, \widehat{y} + \sinh(\delta) \, V_y^x, Z_y \rangle_{p,q+1}\\
& = -\sinh (\delta) \langle Z_x,  V_x^y \rangle_{p,q+1} - \sinh (\delta) \langle V_y^x, Z_y \rangle_{p,q+1},
\end{align*}
and the result follows from the definition of the metric $\mathsf{g}^{p,q}$.
\end{proof}
\begin{figure}[h!]
\centering
\labellist
\small\hair 2pt
\pinlabel $x$ [u] at 0 -2
\pinlabel $Z_x$ [u] at 49 30
\pinlabel $V_x^y$ [u] at 26 -4
\pinlabel $y$ [u] at 199 -2
\pinlabel $Z_y$ [u] at 157 26
\pinlabel $V_y^x$ [u] at 172 -4
\pinlabel $\delta$ [u] at 100 -1
\endlabellist
\includegraphics[width=10cm]{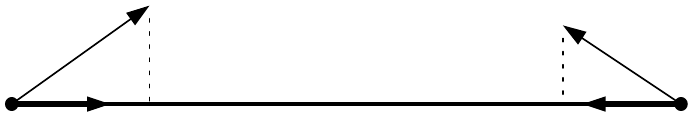}
\caption{Illustration of Proposition~\ref{prop:deriv-dHpq}}
\label{fig:tang-vect}
\end{figure}

Note that when $q\geq 1$, the function $d_{\HH^{p,q}}$ is \emph{not} a distance function on $\HH^{p,q}$ in the usual sense: for many triples it does not satisfy the triangle inequality.
See \cite[\S\,3]{gm16} for further discussion of this issue.

\begin{remark} \label{rem:hadamard}
If $(\mathbb{X}, \mathsf{g})$ is a Hadamard manifold (\ie a simply connected finite-dimensional Riemannian manifold of nonpositive curvature), then for any $x,y\in \mathbb{X}$ and $Z_x\in T_x\mathbb{X}$ and $Z_y\in T_y\mathbb{X}$, one has as in Proposition~\ref{prop:deriv-dHpq}: 
$$ \frac{\D}{\D t}\Big|_{t=0} \, d_{\mathbb{X}} \big(\exp_x(tZ_x), \exp_y(tZ_y)\big )  = - \mathsf{g}_x (Z_x,V_x^y ) - \mathsf{g}_y ( Z_y,V_y^x )$$
where $V_z^{z'}\in T_z \mathbb{X}$ is the unit vector at $z$ pointing to~$z'$.
This follows from the first variation formula, and the absence of conjugate points \cite[Cor.~2.4 \& Th.~5.1]{kono}.
\end{remark}

%%%%%%%%%%%%%%%%%%%%%%%%%
\subsection{The Lie algebra $\oo(p,q+1)$} \label{subsec:o(p,q+1)}

Since $\mathrm{PO}(p,q+1)=\mathrm{Isom}(\HH^{p,q})$, the Lie algebra $\oo(p,q+1)$ identifies with the set of \emph{Killing fields} on~$\HH^{p,q}$, \ie of vector fields whose flow is isometric: an element $Y\in\oo(p,q+1)$ corresponds to the Killing field
\begin{equation}
\label{eqn:defkill}
x \longmapsto \frac{\D}{\D t}\Big|_{t=0} \exp(tY)\cdot x \: \in T_x\HH^{p,q},
\end{equation}
see \eg \cite[Ex.\,17.9]{der00}.
This identification is canonical, and we will use it throughout Sections~\ref{sec:pseudo-Riem-contraction} and~\ref{sec:space-contr-Coxeter}, writing \eg
$$u(\gamma)(x)\in T_x\HH^{p,q}$$
for the value of the Killing field associated to $u(\gamma)$ at a point $x\in \HH^{p,q}$, when $(\rho,u):\Gamma\rightarrow\OO(p,q+1)\ltimes\mathfrak{o}(p,q+1)$ is a representation.

The Lie algebra $\oo(p,q+1)$, endowed with its Killing form $\kappa_{p,q+1}$, identifies with $\RR^{p',q'}$ where
\begin{equation} \label{eqn:sign-Killing}
(p',q') = \big(p(q+1) ~,~ (p^2+q^2-p+q)/2\big).
\end{equation}
The adjoint action of $\OO(p,q+1)$ on $\oo(p,q+1)$ preserves $\kappa_{p,q+1}$.
Using geometric properties of actions on $\HH^{p,q}$, we shall construct proper affine actions on $\oo(p,q+1)\simeq\RR^{p',q'}$.

%%%%%%%%%%%%%%%%%%%%%%%%%
\subsection{Relating the pseudo-metric $d_{\HH^{p,q}}$ with the highest eigenvalue} \label{subsec:remind-lambda1}

For any $g\in \mathrm{GL}(\RR^{p+q+1})$, we denote by $\lambda_1(g)\geq\dots\geq\lambda_{p+q+1}(g)$ (\resp $\mu_1(g)\geq\dots\geq\mu_{p+q+1}(g)$) the logarithms of the moduli of the eigenvalues (\resp singular values) of~$g$.
If $\Vert\cdot\Vert$ denotes the operator norm associated to the standard Euclidean norm on $\RR^{p+q+1}$, then
\begin{equation}\label{eqn:muis}
\mu_1(g) = \log \Vert g\Vert.
\end{equation}

An element $g\in\mathrm{GL}(\RR^{p+q+1})$ is called \emph{proximal in $\PP(\RR^{p+q+1})$}, or \emph{proximal} for short, if $\lambda_1(g)>\lambda_2(g)$; equivalently, $g$ admits a unique attracting fixed point in $\PP(\RR^{p,q+1})$.
If $g\in \OO(p,q+1)\subset \mathrm{GL}(\RR^{p+q+1})$, then $\lambda_i(g) = -\lambda_{p+q+2-i}(g)$ and $\mu_i(g) = -\mu_{p+q+2-i}(g)$ for all~$i$.
In particular, any proximal element $g\in\OO(p,q+1)$ has, not only an attracting fixed point, but also a repelling fixed point in $\PP(\RR^{p,q+1})$; these points belong to $\partial\HH^{p,q}$.

In Section~\ref{subsec:proof-pseudo-Riem-contract-proper-G} we shall use the following classical observations.

\begin{lemma} \label{lem:orbit-growth}
Let $g\in\OO(p,q+1)$ and let $y\in \HH^{p,q}$.
\begin{enumerate}
  \item\label{item:growth-general} We have
  $\displaystyle \underset{n\to +\infty}{\limsup} \; \frac{1}{n} \, d_{\HH^{p,q}}(y,g^n y) \leq \lambda_1(g)$.
  \item\label{item:growth-prox} If $g$ is proximal, with attracting and repelling fixed points $\xi_g^{\pm}\in\partial\HH^{p,q}$, and if $y\notin (\xi_g^+)^{\perp} \cup (\xi_g^-)^{\perp}$, then
  $\displaystyle \frac{1}{n} \, d_{\HH^{p,q}}(y,g^n y) \underset{n\to +\infty}{\longrightarrow} \lambda_1(g)$.
\end{enumerate}
\end{lemma}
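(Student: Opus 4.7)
The plan is to translate both estimates, via~\eqref{eqn:d-Hpq-inner-prod}, into asymptotic estimates on $|\langle\widehat y,g^n\widehat y\rangle_{p,q+1}|$ as $n\to\infty$, where $\widehat y\in\widehat{\HH}^{p,q}$ is a fixed lift of~$y$. To this end I would fix an auxiliary Euclidean norm $\Vert\cdot\Vert$ on $\RR^{p+q+1}$ together with a constant $C>0$ such that $|\langle v,w\rangle_{p,q+1}|\leq C\,\Vert v\Vert\,\Vert w\Vert$ for all $v,w$. Recall that $d_{\HH^{p,q}}(y,g^ny)=\mathrm{arccosh}\,|\langle\widehat y,g^n\widehat y\rangle_{p,q+1}|$ when $y$ and $g^ny$ lie on a spacelike line, and $d_{\HH^{p,q}}(y,g^ny)=0$ otherwise by Notation~\ref{not:d-Hpq}; so in either case I only need a good upper, resp.\ matching, bound on the inner product.

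For~\eqref{item:growth-general}, the plan is to combine the trivial Cauchy--Schwarz-type bound $|\langle\widehat y,g^n\widehat y\rangle_{p,q+1}|\leq C\Vert\widehat y\Vert^2\,\Vert g^n\Vert$ with the elementary inequality $\mathrm{arccosh}(x)\leq\log(2x)$ (valid for $x\geq 1$) to obtain, whenever $d_{\HH^{p,q}}(y,g^ny)>0$,
$$d_{\HH^{p,q}}(y,g^ny)\,\leq\,\log\!\bigl(2C\Vert\widehat y\Vert^2\bigr)+\log\Vert g^n\Vert.$$
Dividing by~$n$ and invoking Gelfand's formula $\lim_n \tfrac{1}{n}\log\Vert g^n\Vert=\lambda_1(g)$ (which follows from~\eqref{eqn:muis} and the definition of the spectral radius) would then yield the required $\limsup$.

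For~\eqref{item:growth-prox}, I plan to use proximality to isolate a two-dimensional $g$-invariant plane on which the leading growth concentrates. Since $g\in\OO(p,q+1)$ is proximal, both $e^{\lambda_1(g)}$ and $e^{-\lambda_1(g)}$ are simple real eigenvalues (simplicity of the latter follows from the $\lambda\leftrightarrow-\lambda$ symmetry of the spectrum), with eigenlines lifted by isotropic vectors $\widehat\xi_g^{\pm}$. A first algebraic step will be to show that $c:=\langle\widehat\xi_g^+,\widehat\xi_g^-\rangle_{p,q+1}\neq 0$: indeed, $g$-invariance of the form forces $\widehat\xi_g^+$ to pair trivially with every generalized eigenspace of $g$ other than $\RR\widehat\xi_g^-$, so non-degeneracy of $\langle\cdot,\cdot\rangle_{p,q+1}$ makes the remaining pairing nonzero. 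Then $V:=\RR\widehat\xi_g^++\RR\widehat\xi_g^-$ is a non-degenerate $g$-invariant plane, and $\RR^{p+q+1}=V\oplus V^\perp$ with $g|_{V^\perp}$ of spectral radius $e^{\lambda_2(g)}<e^{\lambda_1(g)}$. Decomposing $\widehat y=a\widehat\xi_g^++b\widehat\xi_g^-+w$ with $w\in V^\perp$, the hypothesis $y\notin(\xi_g^+)^\perp\cup(\xi_g^-)^\perp$ translates (using isotropy of $\widehat\xi_g^\pm$) into $a,b\neq 0$, and a direct expansion will give
$$\langle\widehat y,g^n\widehat y\rangle_{p,q+1}\,=\,2abc\,\cosh\!\bigl(n\lambda_1(g)\bigr)+\langle w,g^nw\rangle_{p,q+1},$$
with the second term bounded by $O(\mathrm{e}^{n(\lambda_2(g)+\eps)})$ for any $\eps>0$ by Gelfand applied to $g|_{V^\perp}$. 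For large~$n$ the first term dominates, the inner product exceeds~$1$ in absolute value, so~\eqref{eqn:d-Hpq-inner-prod} applies, and $\tfrac{1}{n}d_{\HH^{p,q}}(y,g^ny)\to\lambda_1(g)$ via $\mathrm{arccosh}(x)\sim\log(2x)$.

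The main subtlety will be the algebraic step $c\neq 0$ and the $g$-invariant orthogonal decomposition it yields; this is what prevents the $e^{n\lambda_1(g)}$ contribution in the inner-product expansion from being accidentally cancelled, and without it the sharpness in~\eqref{item:growth-prox} could fail. All other steps are routine asymptotic analysis.
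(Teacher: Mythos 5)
Your proof is correct and takes essentially the same approach as the paper: for~\eqref{item:growth-general}, both bound $\Vert g^n\Vert$ to control $|\langle\widehat y,g^n\widehat y\rangle_{p,q+1}|$ (the paper via Jordan decomposition, you via Gelfand's formula, which are equivalent); for~\eqref{item:growth-prox}, both decompose $\widehat y$ over generalized eigenspaces of~$g$ and identify the cross term $\langle\widehat\xi_g^-,\widehat\xi_g^+\rangle_{p,q+1}$ as the dominant, non-vanishing $\mathrm{e}^{n\lambda_1(g)}$ contribution. Your write-up is more explicit than the paper's on why $c\neq 0$ and on the orthogonal decomposition $V\oplus V^\perp$, but the underlying argument is the same.
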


\begin{proof}
\eqref{item:growth-general} By writing the Jordan decomposition of~$g$ as the commuting product of a hyperbolic, a unipotent, and an elliptic element, we see that $\Vert g^n\Vert$ is bounded above by a polynomial times $\mathrm{e}^{n\lambda_1(g)}$, hence so is $|\langle v, g^n v \rangle_{p,q+1}|$ where $[v]=y$.
We conclude using \eqref{eqn:d-Hpq-inner-prod}.

\eqref{item:growth-prox} Again, by \eqref{eqn:d-Hpq-inner-prod}, it suffices to study the growth of $\langle v, g^n v \rangle_{p,q+1}$ where $[v]=y$.
The projective hyperplane $(\xi_g^\pm)^\perp$ is the projectivization of the sum of the generalized eigenspaces of~$g$ for eigenvalues other than $\mathrm{e}^{\mp \lambda_1(g)}$.
Therefore the assumption on $y$ means that~$v$, when decomposed over the generalized eigenspaces of~$g$, has nonzero components $v^+, v^-$ along $\xi_g^+$ and~$\xi_g^-$.
These components are \emph{not} orthogonal.
In the pairing $\langle v, g^n v \rangle_{p,q+1}$, the term $\langle v^-, g^n v^+ \rangle_{p,q+1} = \mathrm{e}^{n\lambda_1(g)} \langle v^-,  v^+ \rangle_{p,q+1}$ therefore dominates all the others, and grows like $\mathrm{e}^{n\lambda_1(g)}$ as $n\to +\infty$.
We conclude using \eqref{eqn:d-Hpq-inner-prod}.
\end{proof}

We shall also use the following fact, which combines a result of Abels--Margulis--Soifer \cite[Th.\,5.17]{ams95} with a small compactness argument of Benoist: see the two lemmas of \cite[\S\,4.5]{ben97}.
Recall that a representation into $\GL(\RR^{p,q+1})$ is called \emph{strongly irreducible} if its image does not preserve any nonempty finite union of nonzero proper linear subspaces of~$\RR^{p,q+1}$.

\begin{fact}[\cite{ams95,ben97}] \label{fact:AMS}
Let $\Gamma$ be a discrete group and $\rho' : \Gamma\to\GL(\RR^{p,q+1})$ a strongly irreducible representation such that $\rho'(\Gamma)$ contains a proximal element.
Then there exist a finite set $F\subset\Gamma$ and a constant $C_{\rho'}\geq 0$ such that for any $\gamma\in\Gamma$, we may find $f\in F$ such that $\rho'(\gamma f)$ is proximal and satisfies $\lambda_1(\rho'(\gamma f))\geq \mu_1(\rho'(\gamma))-C_{\rho'}$.
\end{fact}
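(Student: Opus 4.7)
My plan is to combine two ingredients: the Abels--Margulis--Soifer theorem on the abundance of proximal elements in strongly irreducible representations, and Benoist's quantitative comparison between top eigenvalues and top singular values of ``uniformly'' proximal matrices.

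First I would invoke \cite[Th.~5.17]{ams95}: strong irreducibility of $\rho'$ together with the existence of a single proximal element in $\rho'(\Gamma)$ yields a finite set $F\subset\Gamma$ and constants $r,\varepsilon>0$ such that for every $\gamma\in\Gamma$ one can find $f\in F$ for which $\rho'(\gamma f)$ is \emph{$(r,\varepsilon)$-proximal}: its attracting fixed point $x^+\in\PP(\RR^{p+q+1})$ and repelling projective hyperplane $H^-$ lie at projective distance at least $r$, and $\rho'(\gamma f)$ contracts the complement of the $\varepsilon$-neighborhood of $H^-$ into the $\varepsilon$-ball around $x^+$. The one-sided formulation here follows from the two-sided version of \cite{ams95} by a standard reduction that absorbs a finite left multiplier into a larger right set, using that $\mu_1$ changes by at most a bounded amount under multiplication by any element of a fixed finite set.

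Next I would apply the compactness argument of \cite[\S 4.5]{ben97}: any $(r,\varepsilon)$-proximal $g\in\GL(\RR^{p+q+1})$ satisfies $\lambda_1(g)\geq\mu_1(g)-C(r,\varepsilon)$ for a constant depending only on $r$ and $\varepsilon$. The underlying inequality is elementary: pick a unit eigenvector $v^+$ for the dominant eigenvalue $\mathrm{e}^{\lambda_1(g)}$ and a unit linear form $\varphi^-$ vanishing on $H^-$; the $(r,\varepsilon)$-hypothesis forces $|\varphi^-(v^+)|\geq c(r,\varepsilon)>0$, and from $gv^+=\mathrm{e}^{\lambda_1(g)}v^+$ one gets $\mathrm{e}^{\lambda_1(g)}\,c(r,\varepsilon)\leq|\varphi^-(gv^+)|\leq\|g\|=\mathrm{e}^{\mu_1(g)}$ by \eqref{eqn:muis}.

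Applying Step~2 to $g=\rho'(\gamma f)$ gives $\lambda_1(\rho'(\gamma f))\geq\mu_1(\rho'(\gamma f))-C(r,\varepsilon)$, while submultiplicativity of the operator norm yields $\mu_1(\rho'(\gamma f))\geq\mu_1(\rho'(\gamma))-\max_{f\in F}\mu_1(\rho'(f^{-1}))$; adding these gives the claim with $C_{\rho'}:=C(r,\varepsilon)+\max_{f\in F}\mu_1(\rho'(f^{-1}))$. The main obstacle is Step~1, the Abels--Margulis--Soifer theorem itself, whose proof is a delicate ping-pong-style argument on flag varieties exploiting strong irreducibility (and a Zariski-density consequence thereof) to guarantee that finitely many right-multipliers suffice to put the top singular directions of any $\rho'(\gamma)$ into transverse position; by contrast, Steps~2 and~3 are soft compactness and norm manipulations.
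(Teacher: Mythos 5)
Your overall architecture — combine Abels--Margulis--Soifer \cite[Th.~5.17]{ams95} with the two lemmas in \cite[\S 4.5]{ben97} and then a submultiplicativity estimate — is exactly what the paper says Fact~\ref{fact:AMS} is (the paper gives no proof beyond this citation), and your Steps~1 and 3, including the reduction from two-sided to one-sided multipliers by conjugation, are fine as sketches.

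However, your ``elementary'' argument for Step~2 proves the wrong inequality. From $|\varphi^-(v^+)|\geq c(r,\varepsilon)$ and $gv^+=\pm\mathrm{e}^{\lambda_1(g)}v^+$ you get $\mathrm{e}^{\lambda_1(g)}c(r,\varepsilon)\leq|\varphi^-(gv^+)|\leq\Vert g\Vert=\mathrm{e}^{\mu_1(g)}$, i.e.\ $\lambda_1(g)\leq\mu_1(g)-\log c(r,\varepsilon)$. Since $c(r,\varepsilon)\leq 1$ this is just a \emph{weaker} form of the trivial bound $\lambda_1(g)\leq\mu_1(g)$ (spectral radius $\leq$ operator norm) and says nothing about the lower bound $\lambda_1(g)\geq\mu_1(g)-C(r,\varepsilon)$ that Step~3 actually uses. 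The direction of the inequality matters: testing the eigen-functional $\varphi^-$ against $gv^+$ can only bound $\lambda_1$ \emph{from above} by $\mu_1$, never from below.

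To get the correct direction you must instead \emph{lower-bound} $\Vert gv^+\Vert=\mathrm{e}^{\lambda_1(g)}$ in terms of $\Vert g\Vert=\mathrm{e}^{\mu_1(g)}$. Writing $g=k_1ak_2$ in singular value form, $\Vert gv\Vert\geq\sin\theta\cdot\mathrm{e}^{\mu_1(g)}\Vert v\Vert$ whenever $[v]$ is at angle $\geq\theta$ from the ``singular repelling hyperplane'' $\PP(\mathrm{span}(k_2^{-1}e_2,\dots,k_2^{-1}e_{p+q+1}))$; so what one needs is that $x^+=[v^+]$ lies a definite distance from this hyperplane. Establishing that (uniformly over $(r,\varepsilon)$-proximal elements) is precisely the content of Benoist's compactness lemmas — it is not a one-line consequence of $|\varphi^-(v^+)|\geq c$, and your sketch does not contain it. So Step~2 as written is a genuine gap; either cite \cite[\S 4.5]{ben97} as a black box or replace the argument by the lower bound on $\Vert gv^+\Vert$ just described (together with the compactness argument showing $x^+$ is far from the singular hyperplane).
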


%%%%%%%%%%%%%%%%%%%%%%%%%
\subsection{A Finsler metric on the Riemannian symmetric space}

Let $\mathbb{X}=G/(\OO(p)\times \OO(q+1))$ be the Riemannian symmetric space of $G=\OO(p,q+\nolinebreak 1)$, with basepoint $x_0=[e]\in\mathbb{X}$.
In Section~\ref{subsec:proof-pseudo-Riem-contract-proper-G} we will use the following $G$-invariant Finsler metric $d_{\mathbb{X}}$ on~$\mathbb{X}$:
\begin{equation} \label{eqn:finsler}
d_{\mathbb{X}}(g\cdot x_0,g'\cdot x_0) := \mu_1(g^{-1}g') = \log \Vert g^{-1}g'\Vert
\end{equation}
for all $g,g'\in G$, where $\Vert\cdot\Vert$ is the Euclidean operator norm on $\RR^{p+q+1}$ as above.
This is indeed a metric: $d_{\mathbb{X}}$ vanishes only on the diagonal of $\mathbb{X}\times\mathbb{X}$ because $\mu_1|_G$ vanishes only on $G \cap \OO(p+q+1) = \OO(p) \times \OO(q+1)$; symmetry follows from the equality $\mu_1(g) = \mu_1(g^{-1})$ for $g\in G=\OO(p,q+1)$; and $d_{\mathbb{X}}$ satisfies the triangle inequality because the operator norm $\Vert \cdot \Vert$ is submultiplicative.

%%%%%%%%%%%%%%%%%%%%%%%%%%%%%%%%%%%%%%%%%%%%%%%%%%%
\section{Metric contraction and properness} \label{sec:metric-contraction}

In this section we give some sufficient conditions for the properness of actions of discrete groups on $\OO(p,1)$ and $\oo(p,1)$; we shall use these conditions to prove Propositions \ref{prop:ex-affine} and~\ref{prop:ex-G} in Section~\ref{sec:riem-ex}.
Extensions to $\OO(p,q+1)$ and $\oo(p,q+1)$ will be given in Section~\ref{sec:pseudo-Riem-contraction}, and used to prove Theorems \ref{thm:proper-action-g} and~\ref{thm:proper-action-G} in Section~\ref{sec:space-contr-Coxeter}.

In the whole section, we consider, for a topological group~$G$:
\begin{itemize}
   \item the action of $G\times G$ on~$G$ by right-and-left multiplication: $(g_1,g_2)\cdot g=g_2gg_1^{-1}$;
   \item if $G$ is a Lie group with Lie algebra~$\g$, the affine action of $G\ltimes\g$ on~$\g$ through the adjoint action: $(g,Z)\cdot Y=\Ad(g)Y+Z$.
\end{itemize}

%%%%%%%%%%%%%%%%%%%%%%%%%
\subsection{Actions on groups} \label{subsec:metric-contract-G}

Let $G$ be a topological group acting continuously by isometries on a complete metric space $(\mathbb{X},d)$ which is proper (\ie closed balls are compact).
 Given a discrete group $\Gamma$ and a representation $(\rho, \rho'):\nolinebreak\Gamma \to G\times G$, recall that a map $f:\mathbb{X} \to \mathbb{X}$ is called \emph{$(\rho,\rho')$-equivariant} if for all $\gamma\in \Gamma$ and $x\in \mathbb{X}$,
\begin{equation} \label{eqn:rho-rho'-equiv}
f(\rho(\gamma)\cdot x)=\rho'(\gamma)\cdot f(x).
\end{equation}
We shall use the following terminology.

\begin{definition} \label{def:contract-equivar-deform-G}
Let $\Gamma$ be a discrete group and $\rho : \Gamma\to G$ a representation defining a properly discontinuous action of $\Gamma$ on~$\mathbb{X}$.
A representation $\rho' :\nolinebreak\Gamma\to\nolinebreak G$ is \emph{coarsely uniformly contracting} with respect to~$(\rho,\mathbb{X})$ if there exist a nonempty $\rho(\Gamma)$-invariant closed set $\mathcal{O}\subset \mathbb{X}$ and a $(\rho,\rho')$-equivariant continuous map $f : \mathcal{O} \to \mathbb{X}$ which is \emph{coarsely $\Kap$-Lipschitz} for some $\Kap<1$, \ie there exists $\Kap'\in\RR$ such that for all $x,y\in\mathcal{O}$,
  $$d(f(x),f(y)) \leq \Kap\,d(x,y)+\Kap' .$$
  If we can take $\Kap'=0$, then we say that $f$ is \emph{$\Kap$-Lipschitz} and $\rho'$ is \emph{uniformly contracting} with respect to~$(\rho, \mathbb{X})$.
\end{definition}

The following general statement, applied to $(G,\mathbb{X})=(\OO(p,1), \HH^p)$, will let us derive properness of certain actions on~$G$ by right-and-left multiplication from coarse uniform contraction on~$\mathbb{X}$.
By $\mathcal{F}(\mathcal{O})$ we will always refer to the set of compact subsets of a complete proper metric space~$\mathcal{O}$, endowed with the Hausdorff topology.
Note that a properly discontinuous action of $\Gamma$ on $\mathcal O$ induces a properly discontinuous action of $\Gamma$ on $\mathcal F(\mathcal O)$.

\begin{proposition} \label{prop:coarse-proj-G}
Let $G$ be a topological group acting continuously by isometries on a proper complete metric space $(\mathbb{X},d)$.
Let $\Gamma$ be a discrete group and $(\rho,\rho'):\Gamma \rightarrow G\times G$ a representation such that the action of $\Gamma$ on $\mathbb{X}$ via~$\rho$ is properly discontinuous, and such that $\rho'$ is coarsely uniformly contracting with respect to~$(\rho, \mathbb{X})$, with $\mathcal{O}$, $f$, $C<1$, and $C'$ as in Definition~\ref{def:contract-equivar-deform-G}. 
Then the map
\begin{eqnarray*}
\Pi :\quad G & \longrightarrow & \hspace{.45cm} \mathcal{F}(\mathcal{O})\\
g & \longmapsto & \big\{ x\in\mathcal{O} ~|~ 
d(g\cdot x, f(x)) 
\text{ \emph{is minimal}}\big\}
\end{eqnarray*}
is well defined and takes any compact set to a compact set.
Moreover, $\Pi$ is equivariant with respect to the actions of $\Gamma$ on~$G$ by right-and-left multiplication via $(\rho,\rho')$, and on~$\mathcal{F}(\mathcal{O})$ via~$\rho$.
In particular, the action of $\Gamma$ on~$G$ by right-and-left multiplication via $(\rho,\rho')$ is properly discontinuous.
\end{proposition}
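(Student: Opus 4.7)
The plan is to verify, in order: (i) for every $g \in G$, the set $\Pi(g)$ is a nonempty compact subset of $\mathcal{O}$; (ii) for compact $K \subset G$, the union $\bigcup_{g \in K} \Pi(g)$ lies in a single compact $L \subset \mathcal{O}$; (iii) $\Pi$ is equivariant; and (iv) proper discontinuity follows formally. For (i), I would fix a basepoint $x_0 \in \mathcal{O}$ and study the continuous function $\phi_g : \mathcal{O} \to \mathbb{R}_{\geq 0}$ defined by $\phi_g(x) := d(g\cdot x, f(x))$. Combining the isometry of the $G$-action with the triangle inequality and the coarsely $C$-Lipschitz bound on $f$ yields the coercive estimate
\[
\phi_g(x) \;\geq\; d(g\cdot x, g\cdot x_0) - d(g\cdot x_0, f(x_0)) - d(f(x_0), f(x)) \;\geq\; (1-C)\,d(x,x_0) - A_g,
\]
where $A_g := d(g\cdot x_0, f(x_0)) + C'$. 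Since $C < 1$, all sublevel sets of $\phi_g$ are bounded in $\mathcal{O}$ and hence compact (as $\mathcal{O}$ is closed in the proper space $\mathbb{X}$); thus $\phi_g$ attains its infimum on a nonempty compact set $\Pi(g)$.

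For (ii), I would observe that both $A_g$ and the upper bound $\min \phi_g \leq \phi_g(x_0) = d(g\cdot x_0, f(x_0))$ are continuous in $g$, hence uniformly bounded on compact $K$. Plugging back into the coercivity estimate then bounds $d(x,x_0)$ uniformly for all $x \in \Pi(g)$, $g \in K$, giving a compact $L \subset \mathcal{O}$ with $\bigcup_{g\in K} \Pi(g) \subset L$. In particular $\Pi(K) \subset \mathcal{F}(L)$, which is Hausdorff-compact; a short upper-semicontinuity check (if $g_n \to g$ in $K$ and $x_n \in \Pi(g_n)$ converges to some $x$, then $x \in \Pi(g)$ by continuity of $\phi$) shows that the graph of $\Pi|_K$ is closed, so $\Pi(K)$ is in fact compact in $\mathcal{F}(L)$. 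For (iii), the change of variables $y := \rho(\gamma)^{-1}\cdot x \in \mathcal{O}$ (valid by $\rho(\Gamma)$-invariance of $\mathcal{O}$), together with the isometry of $\rho'(\gamma)$ and the $(\rho,\rho')$-equivariance of $f$, gives
\[
d\big(\rho'(\gamma)g\rho(\gamma)^{-1}\cdot x,\, f(x)\big) \;=\; d\big(\rho'(\gamma)g \cdot y,\, \rho'(\gamma)\cdot f(y)\big) \;=\; d\big(g\cdot y,\, f(y)\big),
\]
from which $\Pi(\rho'(\gamma)g\rho(\gamma)^{-1}) = \rho(\gamma)\cdot \Pi(g)$ is immediate.

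For (iv), if a compact $K \subset G$ and $\gamma\in\Gamma$ satisfy $\rho'(\gamma)K\rho(\gamma)^{-1} \cap K \neq \varnothing$, I would pick $g \in K$ with $g' := \rho'(\gamma)g\rho(\gamma)^{-1} \in K$ and any $x \in \Pi(g)$; by (iii), $\rho(\gamma)\cdot x \in \Pi(g')$, and by (ii) both $x$ and $\rho(\gamma)\cdot x$ lie in the fixed compact set $L$. Proper discontinuity of the $\rho$-action on $\mathbb{X} \supset L$ then restricts such $\gamma$ to finitely many. The main obstacle is the coercivity in step (i), which is the pivotal use of the strict inequality $C < 1$: without it, $\phi_g$ need not be proper, the minimizer set $\Pi(g)$ could be empty or unbounded, and the entire mechanism of equivariantly projecting from $G$ down to $\mathcal{F}(\mathcal{O})$ would collapse.
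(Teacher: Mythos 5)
Your proposal is correct and follows essentially the same approach as the paper's proof: the identical coercivity estimate $(1-C)\,d(x,x_0) - A_g \leq d(g\cdot x, f(x))$ making $\Pi(g)$ a nonempty compact set, the same equivariance computation, and the same reduction to proper discontinuity on $\mathbb{X}$. The only cosmetic differences are that you unwind the $\mathcal{F}(\mathcal{O})$ formalism in step (iv) by tracking orbit points directly in $\mathbb{X}$, and you add an upper-semicontinuity check in (ii) that the paper omits (and which is not strictly needed, since for properness it suffices that $\Pi$ send compacta into some fixed compact subset of $\mathcal{F}(\mathcal{O})$).
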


\begin{proof}
Choose a basepoint $x_0\in\mathcal{O}$.
For any $g\in G$ and $x\in\mathcal{O}$, we have
\begin{align} \label{eqn:dist-gx-fx}
d(g\cdot x,f(x)) & \geq d(g\cdot x, g\cdot x_0) - d(g\cdot x_0, f(x_0))-d(f(x_0), f(x))
\\
& \geq (1-C) \, d(x,x_0) - (C'+d(g\cdot x_0, f(x_0))).\nonumber
\end{align}
Thus $x\mapsto d(g\cdot x, f(x))$ is a proper function on the proper metric space $(\mathcal{O},d|_{\mathcal{O}\times\mathcal{O}})$ for any $g\in G$, and so $\Pi$ is well defined.

The map $\Pi$ takes compact sets to compact sets.
Indeed, let $\mathscr{C}$ be a compact subset of~$G$.
By continuity of the action, there exists $R>0$ such that for all $g\in\mathscr{C}$ we have $d(g\cdot x_0,f(x_0))\leq R$, hence $\Pi(g)$ is contained in $\{ x\in\mathcal{O} \,|\, d(x,x_0)\leq (C'+2R)/(1-C)\}$ by \eqref{eqn:dist-gx-fx}.

The equivariance of $\Pi$ follows from that of~$f$: for any $\gamma\in\Gamma$ and $x\in\mathcal{O}$,
$$d \big(\rho'(\gamma)g\rho(\gamma)^{-1}\cdot (\rho(\gamma)\cdot x), f(\rho(\gamma)\cdot x) \big) = d(\rho'(\gamma)g\cdot x, \rho'(\gamma)\cdot f(x)) = d(g\cdot x, f(x)).$$

By equivariance of~$\Pi$, since the action of $\Gamma$ on $\mathcal{F}(\mathcal{O})$ via $\rho$ is properly discontinuous, so is the action of $\Gamma$ on~$G$ by right-and-left multiplication via $(\rho,\rho')$.
Indeed, if $\mathscr C$ is a compact subset of~$G$, then $\Pi(\mathscr C)$ is a compact subset of $\mathcal{F}(\mathcal{O})$.
By properness of the action of $\Gamma$ on $\mathcal F(\mathcal O)$, there is a finite subset $S \subset \Gamma$ such that $\rho(\gamma)\cdot \Pi(\mathscr C) \cap \Pi(\mathscr C) = \varnothing$ for all $\gamma \in \Gamma \smallsetminus S$.
By equivariance of~$\Pi$, we have $\rho'(\gamma)\mathscr C\rho(\gamma)^{-1} \cap \mathscr C = \varnothing$ for all $\gamma \in \Gamma \smallsetminus S$~as~well.
\end{proof}

%%%%%%%%%%%%%%%%%%%%%%%%%
\subsection{Equivariance and contraction for vector fields} \label{subsec:def-inf-metric-contract}

Suppose now that $G$ is a finite-dimensional Lie group, $\mathbb{X}$ is a Hadamard manifold, and $G$ acts smoothly by isometries on~$\mathbb{X}$.

There is a natural linear map $\Psi$ from the Lie algebra $\g$ of~$G$ to the space of \emph{Killing fields} on~$\mathbb{X}$, \ie vector fields on~$\mathbb{X}$ whose flow is isometric: it takes $Y\in\g$ to the vector field $\Psi(Y) := (x \mapsto \frac{\D}{\D t}\big|_{t=0} \exp(tY)\cdot x)$ as in \eqref{eqn:defkill}.
For any $(g,Y,x)\in G\times\g\times\mathbb{X}$ we have
\begin{equation} \label{eqn:pushforward-Psi}
\Psi(\mathrm{Ad}(g)Y)(g\cdot x) = g_* (\Psi(Y)(x)).
\end{equation}

Similarly to the notions of equivariance \eqref{eqn:rho-rho'-equiv} and contraction (Definition~\ref{def:contract-equivar-deform-G}) above, we shall use the following terminology.

\begin{definition} \label{def:equiv-field}
Let $(\rho, u):\Gamma \to G\ltimes \g$ be a representation.
A vector field $Z$ on $\mathbb{X}$ is \emph{$(\rho,u)$-equivariant} if whenever $Z(x)$ belongs to some $\Psi(Y)\in\Psi(\g)$, the vector $Z(\rho(\gamma)\cdot x)$ belongs to $\Psi((\rho,u)(\gamma)\cdot Y)$: namely, for all $\gamma\in \Gamma$ and $x\in \mathbb{X}$,
\begin{equation} \label{eqn:equivar}
Z(\rho(\gamma)\cdot x) = \rho(\gamma)_*Z(x) + \Psi(u(\gamma))(\rho(\gamma)\cdot x).
\end{equation}
\end{definition}

\begin{definition} \label{def:contract-equivar-deform-g}
Let $\Gamma$ be a discrete group and $\rho : \Gamma\to G$ a representation defining a properly discontinuous action of $\Gamma$ on~$\mathbb{X}$.
  A $\rho$-cocycle $u : \Gamma\to\g$ is \emph{coarsely uniformly contracting} with respect to~$\mathbb{X}$ if there exist a nonempty $\rho(\Gamma)$-invariant closed set $\mathcal{O}\subset \mathbb{X}$ and a $(\rho,u)$-equivariant continuous vector field $Z$ on~$\mathcal{O}$ which is \emph{coarsely $\kap$-lipschitz} for some $\kap<0$, \ie there exists $\kap'\in\RR$ such that for all $x,y\in\mathcal{O}$,
  \begin{equation}\label{eqn:contract}
  \frac{\D}{\D t}\Big|_{t=0} \: d \left ( \exp_x(tZ(x)), \exp_y(tZ(y)) \right ) \leq \kap \, d (x,y) + \kap' .
  \end{equation}
  If we can take $\kap'=0$, then we say that $Z$ is \emph{$\kap$-lipschitz} and $u$ is \emph{uniformly contracting} with respect to~$\mathbb{X}$.
\end{definition}

Here we use a lowercase \emph{l} in \emph{lipschitz} to emphasize the infinitesimal aspect.
The left-hand side of~\eqref{eqn:contract} is linear in~$Z$ by Remark~\ref{rem:hadamard} and vanishes by definition for $Z$ a Killing field.
The $(\rho, u)$-equivariance of $Z$ entails that both sides of \eqref{eqn:contract} are invariant under replacing $(x,y)$ with $(\rho(\gamma)\cdot x, \rho(\gamma)\cdot y)$. 

Definitions \ref{def:equiv-field} and~\ref{def:contract-equivar-deform-g} are motivated by the classical notion of equivariance~\eqref{eqn:rho-rho'-equiv} and by Definition~\ref{def:contract-equivar-deform-G}, via the following construction.
This construction will actually produce all uniformly contracting cocycles appearing in this paper, and also lies at the heart of \cite{dgk16} with $(G,\mathbb{X})=(\SO(2,1),\HH^2)$.

\begin{lemma} \label{lem:derivate}
Consider an open interval $I\ni 0$, a smooth path $(\rho_\tau)_{\tau \in I}$ in $\mathrm{Hom}(\Gamma, G)$, and the $\rho_0$-cocycle $u := \frac{\D}{\D \tau}\big |_{\tau=0}\,\rho_{\tau} \rho_0^{-1}$.
For any smooth family $(f_\tau : \mathbb{X} \to\nolinebreak\mathbb{X})_{\tau \in I}$ of maps such that $f_0=\mathrm{Id}_{\mathbb{X}}$ and $f_\tau$ is $(\rho_0, \rho_{\tau})$-equivariant for all $\tau \in I$, the derivative $Z(x):=\frac{\D}{\D \tau} {\big |}_{\tau=0}\, f_\tau(x)$ is $(\rho_0,u)$-equivariant.
If moreover there exists $\kap\in\RR$ such that $f_\tau$ is $(1+\kap \tau)$-Lipschitz for all $\tau\geq 0$, then $Z$ is $\kap$-lipschitz.
In particular, if $\kap<0$, then $u$ is uniformly contracting with respect to $\mathbb{X}$.
\end{lemma}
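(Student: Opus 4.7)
The overall plan is to prove both assertions by differentiating the hypotheses on $f_\tau$ at $\tau = 0$, viewing $Z$ as the first-order Taylor coefficient of the path $\tau \mapsto f_\tau$. The equivariance claim follows from a Leibniz expansion of the equivariance of $f_\tau$. The contraction claim follows by replacing $f_\tau$ with the exponential flow $\tau \mapsto \exp_x(\tau Z(x))$ up to an $o(\tau)$ error, then applying the Lipschitz hypothesis and the triangle inequality, and finally passing to a one-sided derivative.

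First, for equivariance, I would differentiate the relation $f_\tau(\rho_0(\gamma) \cdot x) = \rho_\tau(\gamma) \cdot f_\tau(x)$ at $\tau = 0$. The left-hand side contributes $Z(\rho_0(\gamma)\cdot x)$. On the right, write $\rho_\tau(\gamma) = \bigl(\rho_\tau(\gamma) \rho_0(\gamma)^{-1}\bigr) \rho_0(\gamma)$ and apply the Leibniz rule. Because $\rho_\tau(\gamma)\rho_0(\gamma)^{-1}$ equals the identity at $\tau = 0$ with $\tau$-derivative $u(\gamma) \in \g$, its contribution at $\tau = 0$ is $\Psi(u(\gamma))(\rho_0(\gamma)\cdot x)$ by \eqref{eqn:defkill}; the contribution from varying $f_\tau(x)$ with the group element held at $\rho_0(\gamma)$ is $\rho_0(\gamma)_* Z(x)$. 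Summing gives \eqref{eqn:equivar}.

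For contraction, fix $x, y \in \mathbb{X}$ and set $F_\tau(x) := \exp_x(\tau Z(x))$ and $F_\tau(y) := \exp_y(\tau Z(y))$. Both smooth curves $\tau \mapsto f_\tau(x)$ and $\tau \mapsto F_\tau(x)$ start at $x$ with the same velocity $Z(x)$, so $d(f_\tau(x), F_\tau(x)) = o(\tau)$ as $\tau \to 0$, and likewise at $y$. The triangle inequality and the $(1 + c\tau)$-Lipschitz hypothesis on $f_\tau$ give, for $\tau \geq 0$ small,
\[
d(F_\tau(x), F_\tau(y)) \leq d(F_\tau(x), f_\tau(x)) + d(f_\tau(x), f_\tau(y)) + d(f_\tau(y), F_\tau(y)) \leq (1 + c\tau)\, d(x,y) + o(\tau).
\]
Subtracting $d(x, y) = d(F_0(x), F_0(y))$, dividing by $\tau > 0$, and letting $\tau \to 0^+$ yields $\frac{\D}{\D t}\big|_{t = 0^+}\, d(F_t(x), F_t(y)) \leq c\, d(x, y)$. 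By Remark \ref{rem:hadamard}, when $x \neq y$ the function $t \mapsto d(F_t(x), F_t(y))$ is smooth at $t = 0$ (its values stay distinct for small $t$), so its right-derivative coincides with the two-sided derivative; the case $x = y$ is trivial. This is the $c$-lipschitz condition of Definition \ref{def:contract-equivar-deform-g} with $c' = 0$.

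The only analytic input beyond routine computation is the $o(\tau)$ estimate $d(f_\tau(x), F_\tau(x)) = o(\tau)$, which I would verify in a Riemannian normal chart at $x$: two smooth curves in $\mathbb{X}$ agreeing in $1$-jet at $\tau = 0$ differ in distance by $O(\tau^2)$. I expect this estimate, together with the interchange of one-sided and two-sided derivatives at $\tau = 0$ (justified by smoothness of $d$ on a Hadamard manifold away from the diagonal), to be the only slightly delicate points.
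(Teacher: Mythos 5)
Your proof is correct and takes essentially the same approach as the paper: differentiating the equivariance relation for the first claim, and for the contraction, differentiating the Lipschitz inequality at $\tau=0$ (where it is an equality). The only cosmetic difference is that you justify replacing $\tau\mapsto f_\tau(x)$ by $\tau\mapsto\exp_x(\tau Z(x))$ via an $o(\tau)$ estimate plus the triangle inequality, whereas the paper invokes the first-variation formula (Proposition~\ref{prop:deriv-dHpq}/Remark~\ref{rem:hadamard}) directly to say the two $\tau$-derivatives of the distance agree since the curves share their $1$-jets.
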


\begin{proof}
For any $\tau\geq 0$ and $x\in\mathbb{X}$, by equivariance of $f_\tau$ we have
$$f_\tau(\rho_0(\gamma)\cdot x) = \big(\rho_{\tau}(\gamma)\rho_0(\gamma)^{-1}\big)\,\rho_0(\gamma)\cdot f_\tau(x).$$
Differentiating both sides with respect to $\tau$ at $\tau=0$ yields
$$Z(\rho_0(\gamma)\cdot x)=(\mathrm{Id}\circ \rho_0(\gamma))_*(Z(x)) + \Psi(u(\gamma))\big(\rho_0(\gamma)\cdot f_0(x)\big),$$
hence the equivariance property \eqref{eqn:equivar}.

Suppose that there exists $\kap\in\RR$ such that $f_\tau$ is $(1+\kap \tau)$-Lipschitz for all $\tau\geq 0$.
For any $x,y\in\mathbb{X}$, since $\frac{\D}{\D \tau} {\big |}_{\tau=0}\, \exp_x(\tau Z(x)) = Z(x) = \frac{\D}{\D \tau} {\big |}_{\tau=0}\, f_\tau(x)$ and similarly for~$y$, we see that
$$\frac{\D}{\D \tau}\Big|_{\tau=0} \: d \left ( \exp_x(\tau Z(x)), \exp_y(\tau Z(y)) \right ) = \frac{\D}{\D \tau} {\Big |}_{\tau=0}\, d (f_\tau(x),f_\tau(y))$$
by applying Proposition~\ref{prop:deriv-dHpq} twice.
Since the Lipschitz inequality\linebreak $d(f_\tau(x), f_\tau(y)) \leq (1+\kap \tau)\,d (x,y)$ is an equality at $\tau=0$, we can differentiate it at $\tau=0$, yielding the uniform contraction property \eqref{eqn:contract}.
\end{proof}

%%%%%%%%%%%%%%%%%%%%%%%%%
\subsection{Actions on Lie algebras} \label{subsec:metric-contract-g}

Here is the infinitesimal counterpart of Proposition~\ref{prop:coarse-proj-G}.
Again, we denote by $\Psi$ the natural linear map from the Lie algebra $\g$ of~$G$ to the set of Killing fields on~$\mathbb{X}$.

\begin{proposition} \label{prop:coarse-proj-g}
Let $G$ be a finite-dimensional Lie group acting smoothly by isometries on a Hadamard manifold $\mathbb{X}$.
Let $\Gamma$ be a discrete group and $(\rho,u) : \Gamma \to G\ltimes \g$ a representation such that $\Gamma$ acts properly discontinuously on $\mathbb{X}$ via~$\rho$, and such that $u$ is coarsely uniformly contracting with respect to~$\mathbb{X}$, with $\mathcal{O}$, $Z$, $\kap<0$, and~$\kap'$ as in Definition~\ref{def:contract-equivar-deform-g}.
Then the map
\begin{eqnarray*}
\pi :\quad \g & \longrightarrow & \hspace{.45cm} \mathcal{F}(\mathcal{O})\\
Y & \longmapsto & \big\{ x\in\mathcal{O} ~|~ \Vert Z(x) - \Psi(Y)(x) \Vert  
\text{ \emph{is minimal}}\big\}
\end{eqnarray*}
is well defined and takes any compact set to a compact set.
Moreover, $\pi$ is equivariant with respect to the affine action of $\Gamma$ on~$\g$ via $(\rho,u)$ and 
the action of $\Gamma$ on~$\mathcal{F}(\mathcal{O})$ via~$\rho$.
In particular, the affine action of $\Gamma$ on~$\g$ via $(\rho,u)$ is properly discontinuous.
\end{proposition}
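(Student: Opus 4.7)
The plan is to mirror the proof of Proposition~\ref{prop:coarse-proj-G}, with the isometry $g$ replaced by the Killing field $\Psi(Y)$, the equivariant map $f$ replaced by the $(\rho,u)$-equivariant vector field $Z$, and the (coarse) Lipschitz inequality replaced by the (coarse) lipschitz inequality~\eqref{eqn:contract}. The single observation that drives the translation, and is the crux of the argument, is that the first variation formula of Remark~\ref{rem:hadamard} is linear in the pair of tangent vectors at $(x,y)$ and vanishes identically when those vectors come from evaluating a Killing field (whose flow is isometric); hence $Z-\Psi(Y)$ satisfies exactly the same coarse lipschitz estimate as $Z$ does, for every $Y\in\g$.

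The first step will be to establish that $\pi$ is well defined with values in $\mathcal{F}(\mathcal{O})$ and sends compact sets to relatively compact subsets of $\mathcal{F}(\mathcal{O})$. Fix a basepoint $x_0\in\mathcal{O}$. For $x,y\in\mathcal{O}$, the observation above lets me substitute $Z-\Psi(Y)$ for $Z$ in~\eqref{eqn:contract}; rewriting the left-hand side via Remark~\ref{rem:hadamard} and applying Cauchy--Schwarz then yields
\begin{align*}
\|Z(x)-\Psi(Y)(x)\| + \|Z(y)-\Psi(Y)(y)\| \;\geq\; |\kap|\,d(x,y) - \kap'.
\end{align*}
Specializing $y=x_0$ turns this into a proper coercivity bound on $x\mapsto \|Z(x)-\Psi(Y)(x)\|$; since $\mathcal{O}$ is closed in the Hadamard, hence proper, metric space $\mathbb{X}$, the minimum is achieved on a nonempty compact set $\pi(Y)$. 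As $Y$ varies over a compact $\mathscr{C}\subset\g$, the quantity $\|Z(x_0)-\Psi(Y)(x_0)\|$ stays bounded by linearity of $Y\mapsto \Psi(Y)(x_0)$, so $\bigcup_{Y\in\mathscr{C}}\pi(Y)$ is confined to a fixed ball around $x_0$ and $\pi(\mathscr{C})$ lies in a compact subset of $\mathcal{F}(\mathcal{O})$.

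The second step is equivariance. Using the $(\rho,u)$-equivariance~\eqref{eqn:equivar} of $Z$ together with the naturality relation~\eqref{eqn:pushforward-Psi} for $\Psi$, direct substitution gives
\begin{align*}
Z(\rho(\gamma)\cdot x) - \Psi\bigl(\Ad(\rho(\gamma))Y + u(\gamma)\bigr)(\rho(\gamma)\cdot x) \;=\; \rho(\gamma)_*\bigl(Z(x)-\Psi(Y)(x)\bigr).
\end{align*}
Since $\rho(\gamma)$ acts by isometries, taking norms shows the minimization problem for $(\rho,u)(\gamma)\cdot Y$ at the point $\rho(\gamma)\cdot x$ coincides with the one for $Y$ at $x$, and so $\pi((\rho,u)(\gamma)\cdot Y)=\rho(\gamma)\cdot\pi(Y)$. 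Proper discontinuity of the affine action on $\g$ then follows by the standard transfer: $\Gamma$ already acts properly on $\mathcal{F}(\mathcal{O})$ via~$\rho$, and $\pi$ is an equivariant map carrying compact subsets of $\g$ into relatively compact subsets of $\mathcal{F}(\mathcal{O})$, so any would-be collision for the affine action would descend to one downstairs. The main difficulty is really confined to the coercivity bound in the first step; once that is in hand, every remaining piece is a near-verbatim infinitesimal transcription of the proof of Proposition~\ref{prop:coarse-proj-G}.
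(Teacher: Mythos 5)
Your proof is correct and follows essentially the same route as the paper: fix a basepoint, use the first variation formula (Remark~\ref{rem:hadamard}) together with the observation that Killing fields contribute nothing to the lipschitz estimate to derive the coercivity bound for $x\mapsto\|Z(x)-\Psi(Y)(x)\|$, then establish equivariance of $\pi$ from \eqref{eqn:equivar}, \eqref{eqn:pushforward-Psi}, and linearity of $\Psi$, and transfer properness from $\mathcal F(\mathcal O)$. The only cosmetic difference is that you make the Cauchy--Schwarz step explicit and phrase the image of a compact set as relatively compact, which is exactly what the transfer argument needs.
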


\begin{proof}
Choose a basepoint $x_0\in\mathcal{O}$. 
For any vector field $V$ on~$\mathcal{O}$ and any $x\in\mathcal{O}$, Remark~\ref{rem:hadamard} implies
$$- \Vert V(x_0)\Vert - \Vert V(x)\Vert \leq \frac{\D}{\D t}\Big|_{t=0} \: d \big( \exp_x(tV(x)), \exp_{x_0}(tV(x_0))\big) \, ;$$
in particular, if $V$ is coarsely $\kap$-lipschitz (Definition~\ref{def:contract-equivar-deform-g}), then
\begin{equation} \label{eqn:ineq-c-lip}
- \Vert V(x_0)\Vert - \Vert V(x)\Vert \leq \kap\, d(x, x_0) + \kap' .
\end{equation}
By Remark~\ref{rem:hadamard}, the sum of a coarsely $\kap$-lipschitz vector field and a Killing field is still coarsely $\kap$-lipschitz.
Therefore, for any $Y\in \g$, by applying \eqref{eqn:ineq-c-lip} to $V=Z-\Psi(Y)$, we find
$$ \Vert Z(x) - \Psi(Y)(x) \Vert \geq |\kap|\,d(x,x_0) - (\Vert Z(x_0) - \Psi(Y)(x_0) \Vert + \kap').$$
Thus $x\mapsto\Vert Z(x) - \Psi(Y)(x) \Vert$ is a proper function on~$\mathbb{X}$ for any $Y\in\g$, \ie $\pi$ is well defined. 
Moreover, since $Y\mapsto \Vert Z(x_0) - \Psi(Y)(x_0) \Vert$ is bounded on compact sets, $\pi$ takes compact sets to compact sets.
The equivariance of~$\pi$ follows from that of~$Z$, from the linearity of~$\Psi$, and from \eqref{eqn:pushforward-Psi}: for any $\gamma\in\Gamma$ and $x\in\mathcal{O}$,
\begin{align*}
 & \Vert \big(Z - \Psi\big(\mathrm{Ad}(\rho(\gamma))Y+u(\gamma)\big)\big)(\rho(\gamma)\cdot x) \Vert \\
 = & \left \Vert \rho(\gamma)_* (Z(x)) + \Psi(u(\gamma))(\rho(\gamma)\cdot x) - \Psi\big(\mathrm{Ad}(\rho(\gamma)) Y + u(\gamma)\big)(\rho(\gamma)\cdot x) \right \Vert \\
 = & \Vert \rho(\gamma)_*(Z(x)) - \rho(\gamma)_*(\Psi(Y)(x)) \Vert =  \Vert (Z - \Psi(Y))(x) \Vert.
\end{align*}
By equivariance of~$\pi$, since the action of $\Gamma$ on $\mathcal{F}(\mathcal{O})$ via $\rho$ is properly discontinuous, so is the affine action of $\Gamma$ on~$\g$ via $(\rho,u)$.
\end{proof}

%%%%%%%%%%%%%%%%%%%%%%%%%
\subsection{Fibrations}

While the coarse projection arguments of Sections \ref{subsec:metric-contract-G} and \ref{subsec:metric-contract-g} (and, later, Section~\ref{subsec:proof-pseudo-Riem-contract-proper-g}) are useful for determining proper discontinuity of an action, such arguments seem to give little information about the topology of the quotient manifolds.
However, when the coarsely Lipschitz maps $f$ and lipschitz vector fields $Z$ of Propositions \ref{prop:coarse-proj-G} and~\ref{prop:coarse-proj-g} are well behaved, we can deduce explicit fibrations for the quotient manifolds modeled on the group $G$ and its Lie algebra~$\g$.
This idea already appeared in \cite[Prop.\,7.2]{gk17} and in \cite[Prop.\,6.3]{dgk16}.

\begin{proposition} \label{prop:fibrations}
\begin{enumerate}[leftmargin=0.6cm]
  \item In the context of Proposition~\ref{prop:coarse-proj-G}, suppose that $f$ is $C$-Lipschitz (\ie $C'=0$) and that $\mathcal{O}=\mathbb{\mathbb{X}}$.
  Then $\Pi : G \to \mathcal{F}(\mathcal{O})$ takes any $g\in G$ to a singleton of $\mathcal{O}=\mathbb{X}$, \ie we have a $\Gamma$-equivariant map $\Pi : G\to\mathbb{X}$, and this map is continuous.
  If furthermore $G$ acts transitively on $\mathbb{X}$, with point stabilizer $K$, and if $\Gamma$ is torsion-free, then the quotient of $G$ under the action of $\Gamma$ by right-and-left multiplication via $(\rho, \rho')$ is a $K$-bundle over the manifold $\rho(\Gamma)\backslash \mathbb{X}$.
  \item In the context of Proposition~\ref{prop:coarse-proj-g}, suppose that $Z$ is $c$-Lipschitz (\ie $c'=\nolinebreak 0$) and that $\mathcal{O}=\mathbb{\mathbb{X}}$.
  Then $\pi : \g \to \mathcal{F}(\mathcal{O})$ takes any $Y\in\g$ to a singleton of $\mathcal{O}=\mathbb{X}$, \ie we have a $\Gamma$-equivariant map $\pi : \g\to\mathbb{X}$, and this map is continuous.
  If furthermore $G$ acts transitively on $\mathbb{X}$, with infinitesimal point stabilizer $\mathfrak{k}$, and $\Gamma$ is torsion-free, then the quotient of $\g$ under the affine action of $\Gamma$ via $(\rho, u)$ is a $\mathfrak{k}$-bundle over the manifold $\rho(\Gamma)\backslash\mathbb{X}$.
\end{enumerate}
\end{proposition}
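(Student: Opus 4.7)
The plan for part~(1) is to reduce the singleton property of $\Pi(g)$ to Banach's fixed-point theorem. Since $g\in G$ acts isometrically on the complete metric space $(\mathbb{X},d)$ and $f$ is $C$-Lipschitz with $C<1$, the map $g^{-1}\circ f$ is a strict contraction on $\mathbb{X}$ with a unique fixed point $x_g$; then $d(g\cdot x_g,f(x_g))=0$ forces $x_g$ to be the unique minimizer of $x\mapsto d(g\cdot x,f(x))$, so $\Pi(g)=\{x_g\}$. Continuity of $g\mapsto x_g$ follows from the standard parameter dependence in Banach's theorem. The fiber $\Pi^{-1}(x)=\{g\in G : g\cdot x=f(x)\}$ is a single right coset of $\mathrm{Stab}_G(x)\cong K$; smooth local trivializations $\Pi^{-1}(U)\cong U\times K$ over small $U\subset\mathbb{X}$ are built from a smooth local section $\sigma:U\to G$ of $G\to G/K=\mathbb{X}$ together with a smooth choice $g_x\in G$ satisfying $g_x\cdot x=f(x)$ (for instance $g_x=\sigma(f(x))\sigma(x)^{-1}$), via $g\mapsto(\Pi(g),\,\sigma(\Pi(g))^{-1}g_{\Pi(g)}^{-1}g\,\sigma(\Pi(g)))$.

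To descend to the quotients, one checks freeness of both $\Gamma$-actions. Proper discontinuity of the $\rho$-action on $\mathbb{X}$ forces all point stabilizers in $\Gamma$ to be finite, hence trivial under torsion-freeness, so the $\rho$-action is free. By equivariance of $\Pi$, the $(\rho,\rho')$-action on $G$ is then free as well, since $\rho'(\gamma)g\rho(\gamma)^{-1}=g$ forces $\rho(\gamma)$ to fix $\Pi(g)\in\mathbb{X}$. The $K$-bundle $\Pi$ therefore descends to the desired $K$-bundle $\bar\Pi:\Gamma\backslash G\to\rho(\Gamma)\backslash\mathbb{X}$.

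The plan for part~(2) is entirely parallel, with ``unique fixed point of a contraction'' replaced by ``unique zero of a strictly dissipative vector field''. Set $W_Y:=Z-\Psi(Y)$ for $Y\in\mathfrak{g}$. Since the first variation \eqref{eqn:contract} is linear in the vector field (Remark~\ref{rem:hadamard}) and vanishes on the Killing field $\Psi(Y)$, the field $W_Y$ is again $c$-lipschitz with $c<0$ and no additive constant. Uniqueness of a zero is immediate from \eqref{eqn:contract}: at two zeros the left side vanishes, forcing $0\le c\,d(x,y)$ and hence $x=y$. For existence, the properness bound $\|W_Y(x)\|\ge |c|\,d(x,x_0)-\|W_Y(x_0)\|$ from the proof of Proposition~\ref{prop:coarse-proj-g} provides a minimizer $x_Y$ of $\|W_Y\|$; the lipschitz condition implies $\mathsf{g}(\nabla_v W_Y,v)\le c\|v\|^2$ for all $v$ (a standard strict-dissipativity interpretation obtained by pairing \eqref{eqn:contract} with $y$ close to $x$), so $\|W_Y\|^2$ strictly decreases along the flow of $W_Y$ away from any zero. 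Minimality at $x_Y$ then forces $W_Y(x_Y)=0$, and so $\pi(Y)=\{x_Y\}$, with continuity as in~(1).

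For the bundle structure, the fiber $\pi^{-1}(x)=\{Y\in\mathfrak{g} : \Psi(Y)(x)=Z(x)\}$ is an affine translate of $\ker(Y\mapsto\Psi(Y)(x))=\mathfrak{k}_x\cong\mathfrak{k}$, the infinitesimal isotropy at $x$, by transitivity of $G$ on $\mathbb{X}$. Local trivializations are built exactly as in~(1), replacing $g_x$ by a smooth local choice $Y_x\in\mathfrak{g}$ with $\Psi(Y_x)(x)=Z(x)$ and using a smooth identification $\mathfrak{k}_x\cong\mathfrak{k}$. Torsion-freeness combined with equivariance of $\pi$ gives free quotient actions, and the $\mathfrak{k}$-bundle descends. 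The main technical obstacle throughout is the use of strict uniform contraction to extract the unique fixed point or zero; the rest is routine bookkeeping with the $G$-equivariance and the principal $K$-bundle $G\to G/K=\mathbb{X}$.
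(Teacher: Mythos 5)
Part (1) follows essentially the same path as the paper: you identify $\Pi(g)$ as the unique Banach fixed point of the strict contraction $g^{-1}\circ f$, deduce continuity from parameter-dependence in the contraction-mapping argument, and describe the fibers of $\Pi$ as translates $g_x\cdot\mathrm{Stab}_G(x)$ (these are left cosets, not right cosets, but this is a slip of terminology, not of substance). Your freeness argument for descending the bundle to quotients is correct and a touch more explicit than what the paper writes. One small issue with your explicit local trivialization: $g_x=\sigma(f(x))\sigma(x)^{-1}$ requires the local section $\sigma$ to be defined at $f(x)$ as well as at $x$, which need not hold when $U$ is small; one should use two local sections, one near $x$ and one near $f(x)$, but this is routine.

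Part (2) has a genuine gap in your existence argument. Your uniqueness argument for the zero of $W_Y=Z-\Psi(Y)$ (first variation vanishes at two zeros, forcing $0\le c\,d(x,y)$) is correct and matches the paper. But for existence you pass from the lipschitz condition to a differential inequality $\mathsf{g}(\nabla_v W_Y,v)\le c\|v\|^2$ and then flow $W_Y$ to strictly decrease $\|W_Y\|^2$. This requires $W_Y$, hence $Z$, to be at least $C^1$: the covariant derivative $\nabla_v W_Y$ and the flow of $W_Y$ are otherwise not defined. However, Proposition~\ref{prop:coarse-proj-g} (and Definition~\ref{def:contract-equivar-deform-g}) only assume $Z$ continuous, and the paper explicitly warns (Remark~\ref{rem:colors}.(1)) that the equivariant vector fields produced by the piecewise-projective construction are \emph{not} smooth. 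The paper avoids any regularity assumption by using a purely topological argument: the $c$-lipschitz bound plus Remark~\ref{rem:hadamard} shows $W_Y$ points strictly inward on a sufficiently large geodesic sphere about $x_0$, and Brouwer's fixed point theorem (no-retraction form) then yields a zero of the merely continuous field $W_Y$. You should replace your flow argument with this Brouwer argument; alternatively you could first invoke an equivariant smoothing of $Z$ preserving the lipschitz constant, but then you must cite or supply that smoothing, and it is an extra step the paper does not need at this point.
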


In~(2), by the \emph{infinitesimal stabilizer} of a point $x\in\mathbb{X}$ we mean the set of elements $Y\in\g$ corresponding (via $\Psi$) to Killing fields on $\mathbb{X}$ that vanish at~$x$, or equivalently the Lie algebra of the stabilizer of $x$ in~$G$.

\begin{proof}
\begin{enumerate}[leftmargin=0.7cm]
  \item For any $g\in G$ the map $g^{-1}\circ f:\mathbb{X}\to \mathbb{X}$ is $C$-Lipschitz, hence admits a unique fixed point $\Pi(g)$ in~$\mathbb{X}$ since $C<1$.
  The map $\Pi : G\to\mathbb{X}$ is continuous: indeed, if $g'\in G$ is close enough to~$g$ that $d(x, g'^{-1}\circ f(x))\leq (1-C)\,\varepsilon$ where $x=\Pi(g)$, then $g'^{-1}\circ f$ takes the $\varepsilon$-ball centered at $x$ to itself, hence $\Pi(g')$ lies within $\varepsilon$ of $x=\Pi(g)$.
  
  If $G$ acts transitively on $\mathbb{X}$, then $\Pi$ is surjective, and each fiber $\Pi^{-1}(x)=\{g\in G~|~ g\cdot x = f(x) \}$ is a left $G$-translate of the stabilizer of $x$ in~$G$.
  This gives $G$ the structure of a $\Gamma$-equivariant $K$-bundle over~$\mathbb{X}$, which descends to the quotient manifolds if $\Gamma$ has no torsion; this structure is smooth if $f$ is.

  \item For any $Y\in\g$ the vector field $Z - \Psi(Y)$ is $c$-lipschitz on~$\mathbb{X}$, hence inward-pointing on any large enough sphere since $c<0$.
  By Brouwer's fixed point theorem, $Z-\Psi(Y)$ therefore admits a zero $\pi(Y)$ in~$\mathbb{X}$, unique since $c<0$.
  The map $\pi : \g\to\mathbb{X}$ is continuous: indeed, if $Y'\in \g$ is close enough to $Y$ that $\Vert \Psi(Y-Y')(x) \Vert \leq |c|\,\varepsilon$ where $x=\pi(Y)$, then $Z - \Psi(Y') = (Z-\Psi(Y))+ \Psi(Y-Y')$ is inward-pointing on the sphere of radius $\varepsilon$ centered at~$x$ (for the Killing field $\Psi(Y-Y')$ has constant component along any given geodesic through~$x$), hence $\pi(Y')$ lies within $\varepsilon$ of $x=\pi(Y)$.
  
  If $G$ acts transitively on $\mathbb{X}$, then $\pi$ is surjective, and each fiber $\pi^{-1}(x)=\{Y\in\g~|~ \Psi(Y)(x) = Z(x) \}$ is a $\g$-translate of the infinitesimal stabilizer of~$x$.
  This gives $\g$ the structure of a $\Gamma$-equivariant $\mathfrak{k}$-bundle over~$\mathbb{X}$, which descends to the quotient manifolds if $\Gamma$ has no torsion; this structure is smooth if $Z$ is.\qedhere
\end{enumerate}
\end{proof}

\begin{remark} \label{rem:topology}
In~(2) above with $G$ acting transitively on $\mathbb{X}$, the quotient $(\rho,u)\Gamma\backslash\g$ is isomorphic, as a $\mathfrak{k}$-bundle, to the quotient by $\rho(\Gamma)$ of the tautological $\mathfrak{k}$-subbundle $B$ of $\g\times \mathbb{X} \rightarrow \mathbb{X}$ whose fiber above $x\in\mathbb{X}$ is the infinitesimal stabilizer of~$x$.
Indeed, by a partition-of-unity argument, $\pi:\mathfrak{g}\rightarrow \mathbb{X}$ admits a $\Gamma$-equivariant section $\sigma$ such that $\sigma(x)(x)=Z(x)$ for all $x\in \mathbb{X}$, and $(Y,x)\mapsto \sigma(x)+Y$ then defines an equivariant bundle isomorphism $B\overset{\sim}{\rightarrow}\g$.

In~(1), no section exists in general, but one can still describe the bundle structure on~$G$ topologically as a pullback of the tautological $K$-bundle over~$\mathbb{X}$ by any $(\rho, \rho')$-equivariant map (not necessarily contracting). 
\end{remark}

\begin{remark} \label{rem:converse-Riem}
Suppose $(G,\mathbb{X})=(\OO(p,1), \HH^p)$.
\begin{enumerate}[leftmargin=0.6cm] \setcounter{enumi}{-1}
  \item As in Section~\ref{subsec:o(p,q+1)}, the Killing form $\kappa_{p,1}$ on $\g=\mathfrak{o}(p,1)$ has signature $(p,p(p-1)/2)$.
  The stabilizer $K=\OO(p)\times\OO(1)$ (\resp the infinitesimal stabilizer $\mathfrak{k}=\mathfrak{o}(p)$) appearing in Proposition~\ref{prop:fibrations} is a maximal negative definite totally geodesic subspace of $G$ (\resp linear subspace of~$\g$), for the $G$-invariant pseudo-Riemannian structure induced by~$\kappa_{p,1}$.
  \item When $\rho$ is geometrically finite, a converse to Proposition~\ref{prop:coarse-proj-G} holds: up to switching $\rho$ and~$\rho'$, the action of $\Gamma$ on $G$ by right-and-left multiplication via $(\rho, \rho')$ is properly discontinuous if and only if the action of $\Gamma$ on $\mathbb{X}$ via~$\rho$ is properly discontinuous and $\rho'$ is coarsely uniformly contracting with respect to $(\rho, \mathbb{X})$.
  In fact in this case $\rho'$ is actually uniformly contracting with respect to $(\rho, \mathbb{X})$, and one can find a $(\rho,\rho')$-equivariant $\Kap$-Lipschitz map (for some $\Kap<1$) defined on $\mathcal{O}=\mathbb{X}=\HH^p$, making Proposition~\ref{prop:fibrations}.(1) applicable.
  This was proved in \cite{kasPhD} for $p=2$ and convex cocompact~$\rho$, and in \cite{gk17} in general.
  \item For $p=2$ and convex cocompact~$\rho$, a similar converse to Proposition~\ref{prop:coarse-proj-g}  holds up to replacing $u$ by $-u$, by \cite[Th.\,1.1]{dgk16}; again, $u$ is actually uniformly contracting with respect to~$\mathbb{X}$, and one can find a $(\rho,u)$-equivariant $\kap$-lipschitz vector field (for some $\kap<0$) defined on $\mathcal{O}=\mathbb{X}=\HH^p$, making Proposition~\ref{prop:fibrations}.(2) applicable.
  The same statement for geometrically finite~$\rho$ will be proved in \cite{dgk-parab}.
  On the other hand, this converse fails for $p=3$, as $\oo(3,1) \simeq \mathfrak{psl}(2,\CC)$ has a complex structure and properness, unlike uniform contraction, is unaffected when we multiply a cocycle by a nonzero complex number.
\end{enumerate}
\end{remark}

%%%%%%%%%%%%%%%%%%%%%%%%%%%%%%%%%%%%%%%%%%%%%%%%%%%
\section{Examples of proper actions on $\OO(p,1)$ and $\oo(p,1)$ for small~$p$} \label{sec:riem-ex}

In this section we prove Propositions~\ref{prop:ex-affine} and~\ref{prop:ex-G} by applying Propositions~\ref{prop:coarse-proj-G} and~\ref{prop:coarse-proj-g}.

%%%%%%%%%%%%%%%%%%%%%%%%%
\subsection{Uniformly contracting maps obtained by colorings} \label{subsec:surface}

Recall that a discrete subgroup of $\OO(p,1)$ is called \emph{convex cocompact} if it acts with compact quotient on a nonempty convex subset of the hyperbolic space $\HH^p$.
The property for a representation of a discrete group to be injective and discrete with convex cocompact image is stable under small deformations and under embedding into a larger $\OO(p',1)$.
We shall use the word \emph{coloring} for any map to a finite set: the image of an element is then called its \emph{color}.

Our proof of Propositions~\ref{prop:ex-affine} and~\ref{prop:ex-G} uses the following construction.

\begin{proposition} \label{prop:color}
Let $\Gamma$ be a convex cocompact subgroup of $\OO(p,1)$ generated by the orthogonal reflections $\{\gamma_i\}_{1\leq i \leq k}$ in the faces $\{F_i\}_{1\leq i \leq k}$ of a right-angled convex polyhedron of~$\HH^p$.
For $1\leq i\leq k$, let $v_i=(w_i,1)\in \RR^{p,1}$ be a normal vector to~$F_i$. 
Suppose there exist an integer $m\geq 0$ and a coloring $\sigma:\{1, \dots, k\} \rightarrow \{0, \dots, \col\}$ such that $\sigma(i)\neq \sigma(j)$ whenever $F_i$ intersects~$F_j$.
Let $u_0, \dots, u_{\col}$ be the vertices of a regular simplex inscribed in the unit sphere of~$\RR^{\col}$ (if $m=0$, take $u_0=0\in \RR^0$).
For any $1\leq i\leq k$ and $t\in\RR$, we set
$$v^t_i := \left ( \cosh (t) \, w_i , \sqrt{\col} \, \sinh (t) \, u_{\sigma(i)}, 1\right ) \in \RR^{p+\col,1}.$$
Then for any $t\in\RR$, the representation $\rho_t:\Gamma \rightarrow \OO(p+\col,1)$ taking $\gamma_i$ to the orthogonal reflection in $(v^t_i)^\perp \subset \RR^{p+\col,1}$ is well defined, and for small enough $|t|$ it is still injective and discrete, with convex cocompact image.
Moreover, for any $0<t\leq s$ with $t$ small enough, there exists a $(\rho_t, \rho_{s})$-equivariant, $\frac{\cosh (t)}{\cosh(s)}$-Lipschitz map $f_{t,s} : \HH^{p+\col}\to\HH^{p+\col}$; we may take $f_{t,t}=\mathrm{Id}_{\HH^{p+m}}$ and $f_{t,s}$ depending smoothly on $(t,s)$.
\end{proposition}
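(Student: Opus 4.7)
The plan is to verify $\rho_t$ is a well-defined representation, discrete and convex cocompact for small $|t|$; then to construct $f_{t,s}$ as a piecewise projective map built from a specific linear rescaling; and finally to establish the Lipschitz bound with sharp constant $\cosh(t)/\cosh(s)$.

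\emph{First, well-definedness of $\rho_t$.} Since $\Gamma$ admits the presentation with relations $\gamma_i^2=e$ and $(\gamma_i\gamma_j)^2=e$ exactly when $F_i\cap F_j\neq\varnothing$, it suffices to check that each candidate reflection in $(v_i^t)^\perp$ is well-defined and that adjacent ones commute. A direct computation gives
\[
\langle v_i^t,v_j^t\rangle_{p+m,1}=\cosh^2(t)\,(w_i\cdot w_j)+m\sinh^2(t)\,(u_{\sigma(i)}\cdot u_{\sigma(j)})-1.
\]
For $i=j$ this equals $\cosh^2(t)(|w_i|^2-1)+(m+1)\sinh^2(t)>0$ since $|w_i|^2>1$, so $v_i^t$ is spacelike and $\rho_t(\gamma_i)$ is a well-defined orthogonal reflection. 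For adjacent $i\neq j$, the right-angle condition $w_i\cdot w_j=1$ together with the equilateral-simplex identity $u_{\sigma(i)}\cdot u_{\sigma(j)}=-1/m$ (valid because $\sigma(i)\neq\sigma(j)$) makes the expression collapse to $\cosh^2 t-\sinh^2 t-1=0$, so the reflections commute. At $t=0$, $\rho_0$ is the natural inclusion $\Gamma\subset\OO(p,1)\hookrightarrow\OO(p+m,1)$ fixing the extra $\RR^m$ factor pointwise, hence is faithful, discrete, and convex cocompact; these properties persist for small $|t|$ by openness of the convex cocompact locus.

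\emph{Second, construction of $f_{t,s}$.} For $0<t\leq s$ set $\alpha:=\cosh t/\cosh s$ and $\beta:=\sinh t/\sinh s$, both in $(0,1]$, and consider the diagonal linear map $L_{t,s}(x,y,z):=(\alpha x,\beta y,z)$ on $\RR^{p+m+1}$. The identities $\alpha\cosh s=\cosh t$ and $\beta\sinh s=\sinh t$ immediately yield the key intertwining relation
\[
\langle L_{t,s}\,v,\,v_i^s\rangle_{p+m,1}=\langle v,\,v_i^t\rangle_{p+m,1}\qquad\forall\,v\in\RR^{p+m+1},\ \forall\,i.
\]
Since $\alpha,\beta\leq 1$, the induced projective map $\phi_{t,s}$ preserves $\HH^{p+m}$, carries the wall $(v_i^t)^\perp$ to $(v_i^s)^\perp$, and sends the fundamental polytope $P_t$ (cut out by $\langle\,\cdot\,,v_i^t\rangle\leq 0$) onto $P_s$ wall-to-wall. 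Define $f_{t,s}:\HH^{p+m}\to\HH^{p+m}$ piecewise by $f_{t,s}:=\phi_{t,s}$ on $P_t$, and $f_{t,s}(\rho_t(\gamma)\cdot x):=\rho_s(\gamma)\cdot\phi_{t,s}(x)$ for $\gamma\in\Gamma$ and $x\in P_t$. Consistency across a shared wall $F_i^t$ between $P_t$ and $\rho_t(\gamma_i)P_t$ holds because $\phi_{t,s}(F_i^t)\subset F_i^s=\mathrm{Fix}(\rho_s(\gamma_i))$, so the two candidate definitions agree there. The resulting map is continuous, $(\rho_t,\rho_s)$-equivariant, smooth in $(t,s)$, and satisfies $f_{t,t}=\mathrm{Id}$.

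\emph{The main obstacle} is verifying the sharp Lipschitz bound $\alpha=\cosh(t)/\cosh(s)$. Subdividing any hyperbolic geodesic at its crossings with the wall-tessellation by $\rho_t(\Gamma)$-translates of $P_t$, and using that both $\rho_t(\Gamma)$ and $\rho_s(\Gamma)$ act by hyperbolic isometries, reduces the global bound to the pointwise derivative estimate $\|\D\phi_{t,s}\|_{\mathrm{op}}\leq\alpha$ for the hyperbolic norm on $\HH^{p+m}$. Working in coordinates adapted to the splitting $\RR^{p+m}=\RR^p\oplus\RR^m$, one checks three cases: along the embedded copy $\HH^p=\{y=0\}$, $\phi_{t,s}$ restricts to the projective dilation $x\mapsto\alpha x$, which a direct one-variable hyperbolic-trigonometric calculation (essentially $\tfrac{\D}{\D r}\tanh^{-1}(\alpha\tanh r)=\alpha/(\cosh^2 r-\alpha^2\sinh^2 r)\leq\alpha$, with equality only at $r=0$) shows to be $\alpha$-Lipschitz; along the transverse copy $\HH^m=\{x=0\}$, $\phi_{t,s}$ acts as $y\mapsto\beta y$, contracting by the strictly smaller factor $\beta<\alpha$; and mixed tangent directions interpolate, with all principal scaling ratios of $\D\phi_{t,s}$ bounded above by $\alpha$. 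Combined with the geodesic-subdivision argument, this gives $f_{t,s}$ the required global Lipschitz constant $\cosh(t)/\cosh(s)$.
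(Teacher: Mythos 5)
Your verification of well-definedness and your construction of the equivariant piecewise-projective map $f_{t,s}$ match the paper's proof essentially exactly: you use the same linear map (the paper's $f=\frac{\cosh t}{\cosh s}\mathrm{Id}_{\RR^p}\oplus\frac{\sinh t}{\sinh s}\mathrm{Id}_{\RR^m}$ in the affine chart is your $L_{t,s}$ in homogeneous coordinates), you verify the same wall-to-wall property, and your reduction of the global Lipschitz bound to a bound on one fundamental domain via the triangle inequality is also the paper's move.

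The gap is in the final Lipschitz estimate. Having correctly reduced to showing that the projective map $\phi_{t,s}$ induced by $L_{t,s}$ is $\alpha$-Lipschitz with $\alpha=\cosh t/\cosh s$, you assert that all ``principal scaling ratios of $\D\phi_{t,s}$'' are bounded by $\alpha$ because the two coordinate-subspace restrictions contract by $\alpha$ and $\beta\leq\alpha$ respectively and ``mixed tangent directions interpolate.'' That interpolation claim is not self-evident and is left unproved: the Cayley--Klein (Klein--Beltrami) metric is anisotropic and position-dependent, and the relevant inequality involves comparing the source metric at $z$ with the target metric at the \emph{different} point $\phi_{t,s}(z)$, with a cross-term $\left(\frac{z\cdot dz}{1-|z|^2}\right)^2$ that does not split along the $\RR^p\oplus\RR^m$ decomposition. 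The statement does turn out to be true, but it is exactly here that the paper inserts a genuine lemma (Lemma~\ref{lem:compare}, proved by a one-variable logarithmic-derivative estimate of the Hilbert--Finsler norm). The paper's route avoids all pointwise derivative bookkeeping: since $\phi_{t,s}$ is projective, it is an \emph{isometry} from $(B_1,d_{B_1})$ onto the ellipsoid $(E,d_E)$ with $E=\phi_{t,s}(B_1)\subset B_\alpha\subset B_1$, and the whole contraction comes from the nesting of Hilbert metrics, quantified by $d_{B_\alpha}\geq d_{B_1}/\alpha$. To close the gap in your argument you would need to either prove the pointwise operator-norm bound (a concrete but nontrivial computation at every point and every direction, not just on the two coordinate slices), or, more simply, adopt the Hilbert-metric comparison that the paper isolates as Lemma~\ref{lem:compare}.

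One minor remark: your check that each $v_i^t$ is spacelike (the $i=j$ case) is a good explicit sanity check that the paper leaves implicit.
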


\begin{proof}
Let $t\in\RR$.
In order to prove that $\rho_t$ is well defined, we only need to check that $\langle v^t_i, v^t_j \rangle_{p+\col,1}=0$ whenever $F_i$ intersects~$F_j$. 
Since $\langle v_i, v_j \rangle_{p,1}=0$ we have $\langle w_i, w_j \rangle_{p,0}=1$, and $\langle u_{\sigma(i)}, u_{\sigma(j)} \rangle_{\col,0}=-1/\col$.
Therefore 
\begin{align*}
\langle v^t_i, v^t_j \rangle_{p+\col,1} & = \cosh^2 (t) \langle w_i, w_j \rangle_{p,0}+\col \, \sinh^2 (t) \langle u_{\sigma(i)}, u_{\sigma(j)} \rangle_{\col,0} -1 \\ & = \cosh^2 (t) - \sinh^2 (t)  -1 = 0.
\end{align*}
For small enough $|t|$ the representation $\rho_t : \Gamma\to\OO(p+\col,1)$ is injective and discrete with convex cocompact image, since this property is stable under embedding $\OO(p,1)$ into $\OO(p+\col,1)$ and under small deformation.

We now assume that $t>0$ is such that $\rho_t$ is faithful and discrete, and fix $s\geq t$.
Let $P_t\subset \PP(\RR^{p+\col,1})$ be the polytope bounded by the $\PP(v^t_i)^\perp$ for $1\leq i\leq k$, so that $P_t\cap \HH^{p+\col}$ is a fundamental domain for the action of $\rho_t(\Gamma)$ on $\HH^{p+\col}$, with polyhedral boundary.
Define similarly $P_{s}$.
We endow the affine chart $\{x_{p+\col+1}=\nolinebreak 1\} \simeq \RR^{p+\col}$ of $\PP(\RR^{p+\col,1})$ with the standard Euclidean metric, so that $\HH^{p+m}$ is the unit open ball centered at~$0$.
In this chart, the linear map $\frac{\cosh (s)}{\cosh (t)} \, \mathrm{Id}_{\RR^p} \oplus \frac{\sinh (s)}{\sinh (t)} \, \mathrm{Id}_{\RR^{\col}}$ takes the $v^t_i$ to the~$v^{s}_i$. 
Dually,
$$f:=\frac{\cosh (t)}{\cosh (s)} \mathrm{Id}_{\RR^p} \oplus \frac{\sinh (t)}{\sinh (s)} \mathrm{Id}_{\RR^{\col}}$$
must take $P_t$ to $P_{s}$. 
The restriction of $f$ to $P_t\cap \HH^{p+\col}$ can be $(\rho_t, \rho_{s})$-equivariantly extended by orthogonal reflections in the faces of $P_t$ and $P_{s}$, yielding a $(\rho_t,\rho_{s})$-equivariant map $f_{t,s} : \HH^{p+\col}\to\HH^{p+\col}$.
This map is projective in restriction to any $\rho_t(\Gamma)$-translate of~$P_t$, and it takes each reflection face of $P_t$ to the corresponding reflection face of $P_{s}$, hence it is globally continuous.
If $s>t$, then $f_{t,s}(\HH^{p+\col})$ is strictly contained in $\HH^{p+\col}$.

In order to check that the continuous map $f_{t,s} : \HH^{p+\col}\to\HH^{p+\col}$ is $\frac{\cosh (t)}{\cosh (s)}$-Lipschitz, by the triangle inequality, it is enough to focus on the restriction $f$ to one fundamental domain $\HH^{p+\col}\cap P_t$. 
Since $f(\HH^{p+\col})$ (an ellipsoid) is contained in the ball of radius $\frac{\cosh (t)}{\cosh (s)}<1$ centered at~$0$ in our Euclidean chart, while $\HH^{p+\col}$ is the unit ball, the result is an immediate consequence of the following Lemma~\ref{lem:compare}, which quantifies Remark~\ref{rem:Hilb-metric-include}.
\end{proof}

\begin{lemma} \label{lem:compare}
Fix a Euclidean chart $\RR^n$ of $\PP^n(\RR)$.
If $B_r$ denotes the ball of radius~$r$ in~$\RR^n$ centered at~$0$, then the Hilbert metrics on $B_r$ and~$B_1$ satisfy $d_{B_r}(x,y) \geq d_{B_1}(x,y)/r$ for all $r\in (0,1)$ and $x,y\in B_r$. 
\end{lemma}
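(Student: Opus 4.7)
The plan is to reduce the inequality to a one-dimensional infinitesimal comparison on the chord through $x$ and~$y$, and then integrate. First, since the Hilbert distance in either ball depends only on the cross-ratio along the Euclidean line $\ell$ through $x$ and~$y$, we have $d_{B_r}(x,y) = d_J(x,y)$ and $d_{B_1}(x,y) = d_I(x,y)$, where $J := \ell \cap B_r$ and $I := \ell \cap B_1$ are the two chords, with $J \subset I$. The crucial geometric observation is that both chords are \emph{concentric} on~$\ell$: parametrizing $\ell$ by signed arc length $s$ from the foot of the perpendicular from~$0$ to~$\ell$, and writing $h$ for the Euclidean distance from $0$ to~$\ell$, one has $J = (-\sqrt{A},\sqrt{A})$ and $I = (-\sqrt{B},\sqrt{B})$, where $A := r^2 - h^2$ and $B := 1 - h^2 \geq A$.

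Second, using the standard formula $\alpha\,ds/(\alpha^2 - s^2)$ for the infinitesimal Hilbert density on a symmetric interval $(-\alpha,\alpha)$, the desired inequality $r \cdot ds_J \geq ds_I$ rewrites, after clearing denominators and setting $t := s^2 \in [0,A)$, as
$$g(t) := r\sqrt{A}\,(B - t) - \sqrt{B}\,(A - t) \geq 0.$$
Since $g$ is affine in~$t$, it suffices to check $g(0) \geq 0$ and $g(A) \geq 0$. At $t = A$ one has $g(A) = r\sqrt{A}(B - A) = r\sqrt{A}(1 - r^2) \geq 0$; at $t = 0$ the inequality $g(0) \geq 0$ reduces, after squaring, to $r^2 B \geq A$, i.e.\ $h^2(1 - r^2) \geq 0$, which holds since $r < 1$.

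Finally, since $B_r$ is convex, the Euclidean segment from $x$ to~$y$ lies in~$J$, and (because the ball is strictly convex) this segment is the common geodesic for both Hilbert metrics on $B_r$ and~$B_1$. Integrating the pointwise inequality $r \cdot ds_J \geq ds_I$ along this segment yields $r\cdot d_{B_r}(x,y) \geq d_{B_1}(x,y)$, as claimed. I anticipate no serious obstacle; the step worth highlighting is the observation that the two chords are concentric on~$\ell$, which symmetrizes the density formulas about $s = 0$ and collapses what could be a fiddly two-variable comparison (involving the angle between the chord and the radial direction) into the elementary affine-in-$t$ inequality above.
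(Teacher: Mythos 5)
Your proof is correct, and it takes a genuinely different route from the paper's. Both arguments reduce to a pointwise comparison of Hilbert--Finsler densities along the chord $\ell$ through $x$ and $y$. The paper does this by introducing the continuous family of balls $B_{1-\tau}$ for $\tau\in[0,1-r]$, bounding the logarithmic derivative of the density $\nu_{\ell,x}^{B_{1-\tau}}=(\alpha_\tau^{-1}+\beta_\tau^{-1})/2$ from below by $1/(1-\tau)$ (using $\alpha_\tau/\beta_\tau+\beta_\tau/\alpha_\tau\geq 2$ and $\alpha_\tau+\beta_\tau\leq 2(1-\tau)$), and then integrating over $\tau$. You instead exploit the fact that, because both balls share the center $0$, the two chords $\ell\cap B_r$ and $\ell\cap B_1$ are concentric on $\ell$; this symmetrizes the density to the form $\sqrt{A}/(A-s^2)$, and the desired ratio bound collapses to an inequality affine in $t=s^2$, which you verify at the endpoints $t=0$ and $t=A$. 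Your argument is shorter and entirely algebraic, and it avoids the auxiliary one-parameter family; the cost is that it is tied to the concentric-ball geometry, whereas the paper's logarithmic-derivative computation is the one that generalizes directly to Lemma~\ref{lem:compare-general}, where the nested domains are no longer concentric balls. One small remark: the appeal to strict convexity to identify the geodesic is superfluous — the Hilbert distance $d_\Omega(x,y)=\tfrac12\log[a,x,y,b]$ is by definition the integral of the Finsler density along the segment $[x,y]$, so the pointwise inequality integrates without further geometric input.
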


\begin{proof}
Consider a line $\ell$ of the Euclidean chart $\RR^n$ through points $x,y \in B_1$, with $\ell\cap \partial B_1=\{a,b\}$ and $a,x,y,b$ lying in this order on~$\ell$.
We can parametrize $\ell$ at unit Euclidean velocity by $(x_t)_{t\in\RR}$ so that $(a,x,y,b)=(x_{-\alpha},x_0,x_\delta,x_{\beta})$ for some $\delta, \alpha, \beta>0$. 
We have
$$d_{B_1}(x,y)=\frac{1}{2} \log \left ( \frac{\delta+\alpha}{\delta-\beta} \middle / \frac{0+\alpha}{0-\beta} \right)\ \underset{\delta\to 0}{\sim}\ \delta \: \frac{\alpha^{-1}+\beta^{-1}}{2}.$$
The factor $\nu^{B_1}_{\ell, x} := (\alpha^{-1}+\beta^{-1})/2$ expresses the Finsler norm associated to the Hilbert metric $d_{B_1}$ near~$x$, in the direction of~$\ell$, in terms of the ambient Euclidean norm. 
If we replace $B_1$ with a scaled ball $B_{1-\tau}$ for some $\tau>0$, then the new endpoints of $\ell\cap B_{1-\tau}$ lie at linear coordinates $-\alpha_\tau$ and $\beta_\tau$ such that $\frac{\D}{\D\varsigma}\big|_{\varsigma=\tau}\,\alpha_{\varsigma} \leq -1$ and $\frac{\D}{\D\varsigma}\big|_{\varsigma=\tau}\,\beta_{\varsigma} \leq -1$.
Therefore
$$\frac{{\displaystyle\frac{\D}{\D\varsigma}\Big|_{\varsigma=\tau}\ \nu^{B_{1-\varsigma}}_{\ell, x}}}{\nu^{B_{1-\tau}}_{\ell, x}}\geq \frac{\alpha_\tau^{-2}+\beta_\tau^{-2}}{\alpha_\tau^{-1}+\beta_\tau^{-1}}= \frac{\alpha_\tau/\beta_\tau+\beta_\tau/\alpha_\tau}{\alpha_\tau+\beta_\tau} \geq \frac{1}{1-\tau},$$
where we use $\alpha_\tau+\beta_\tau \leq 2-2\tau$ for the last inequality. 
Integrating this logarithmic derivative over $\tau\in[0,1-r]$, we find $\nu^{B_r}_{\ell, x} \geq \nu^{B_1}_{\ell, x}/r$. 
This is valid for all $\ell$ and~$x$, hence $d_{B_r} \geq d_{B_1}/r$.
\end{proof}

\begin{remarks} \label{rem:colors}
\begin{enumerate}[leftmargin=0.6cm]
  \item The $(\rho_t,\rho_{s})$-equivariant maps $f_{t,s}$ of Proposition~\ref{prop:color} are not smooth, but continuous and piecewise projective. 
  Similarly, setting $u_t:= \frac{\D}{\D s}\big |_{s=t}\,\rho_{s} \rho_t^{-1} : \Gamma \rightarrow \oo(p+m,1)$, the $(\rho_t,u_t)$-equivariant vector fields $Z_t := \frac{\D}{\D s} {\big |}_{s=t}\, f_{s}$, which are uniformly contracting by Lemma~\ref{lem:derivate}, are not smooth.
  However, they can be made smooth while remaining uniformly contracting, \eg using the equivariant convolution procedure described in \cite[\S\,5.5]{dgk16}.
  \item Groups $\Gamma$ as in Proposition~\ref{prop:color} are finitely generated, hence admit a torsion-free subgroup $\Gamma_1$ of finite index by the Selberg lemma \cite[Lem.\,8]{sel60}.
  Propositions \ref{prop:coarse-proj-G}, \ref{prop:coarse-proj-g}, and \ref{prop:fibrations} apply in this setting, yielding:
  \begin{itemize}
    \item quotient manifolds $(\rho_t,\rho_{s})(\Gamma_1)\backslash\OO(p+m,1)$ with the structure of an $(\OO(p+m)\times\OO(1))$-bundle over the hyperbolic manifold $\rho_t(\Gamma_1)\backslash\HH^p$,
    \item quotient affine manifolds $(\rho_t,u_t)(\Gamma_1)\backslash\oo(p+m,1)$ with the structure of an $\mathfrak{o}(p+m,1)$-bundle over the hyperbolic manifold $\rho_t(\Gamma_1)\backslash\HH^p$.%
  \end{itemize}%
  \item \label{rem:colored-mst} The case $m=0$ (a single color) of Proposition~\ref{prop:color} is valid: it applies to groups $\Gamma$ freely generated by $k$ involutions, and acting by reflections on~$\HH^p$.
  For $p=2$, applying Proposition~\ref{prop:coarse-proj-g} to an index-two torsion-free subgroup $\Gamma_1$ of~$\Gamma$, we obtain examples of proper affine actions of free groups on $\oo(2,1)\simeq\RR^{2,1}$; the corresponding affine $3$-manifolds are \emph{Margulis spacetimes}.
\end{enumerate}
\end{remarks}

%%%%%%%%%%%%%%%%%%%%%%%%%
\subsection{Proof of Propositions~\ref{prop:ex-affine} and~\ref{prop:ex-G}}

Let $\Gamma$ be the discrete subgroup of $\OO(2,1)$ generated by the reflections in the faces of a convex right-angled $k$-gon in $\HH^p=\HH^2$, for $k\geq 6$ even.
Color the sides of the $k$-gon, alternatingly, with labels $0$ and~$1$.
Applying Proposition~\ref{prop:color} with $\col=1$ yields, for small enough $0<t<s$, faithful and discrete representations $\rho_t,\rho_{s} : \Gamma\to\nolinebreak\OO(3,1)$ and $(\rho_t, \rho_{s})$-equivariant, $\frac{\cosh (t)}{\cosh(s)}$-Lipschitz maps $f_{t,s} : \HH^3\to\HH^3$ (Figure~\ref{rhombus} shows a fundamental polyhedron).
In particular, $\rho_{s}$ is uniformly contracting with respect to $(\rho_t,\HH^3)$ (Definition~\ref{def:contract-equivar-deform-G}), and by Lemma~\ref{lem:derivate} the $\rho_t$-cocycle $u_t := \frac{\D}{\D s}\big |_{s=t}\,\rho_{s} \rho_t^{-1}$ is uniformly contracting with respect to~$\HH^3$ (Definition~\ref{def:contract-equivar-deform-g}) since $\frac{\D }{\D s}\big|_{s=t}\,\frac{\cosh (t)}{\cosh (s)} = -\tanh (t) <0$.
Applying Propositions \ref{prop:coarse-proj-G} and~\ref{prop:coarse-proj-g}, we obtain Propositions \ref{prop:ex-G}.(a) and~\ref{prop:ex-affine}.(a).

\begin{figure}[h!]
\refstepcounter{figure} \addtocounter{figure}{-1}
\label{rhombus}
\labellist
\small\hair 2pt
\pinlabel {$\HH^3$} [c] at 83.5 75
\pinlabel {$F_4^t$} [c] at 33 121
\pinlabel {$F_5^t$} [c] at 33 25
\pinlabel {$F_6^t$} [c] at 52 120
\pinlabel {$\color{light-gray}F_1^t$} [c] at 55 35
\pinlabel {$\color{light-gray}F_2^t$} [c] at 35 109
\pinlabel {$\color{light-gray}F_3^t$} [c] at 30 37
\pinlabel {$P_t$} [c] at 26 10
\endlabellist
\centering{
\begin{minipage}{3cm}\includegraphics[width=3cm]{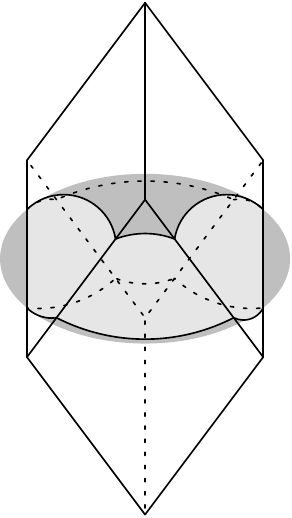}\end{minipage}\begin{minipage}{9.5cm}\caption{A fundamental domain $P_t \cap \HH^3$ for the action of $\rho_t(\Gamma)$ on~$\HH^3$, bounded by planes $F_i^t=(v_i^t)^\perp$ for $1\leq i\leq k$ (here $k=6$, \ie $\Gamma$ is a right-angled hexagon group). 
The hexa\-hedron $P_t$ becomes vertically more elongated as $t\rightarrow 0$. 
The faces $F_1^t, F_2^t, F_3^t$ are at the back; the ellipsoid $\HH^3$ is shaded.}\end{minipage}}
\end{figure} 

Similarly, in order to prove Propositions~\ref{prop:ex-G}.(b) and~\ref{prop:ex-affine}.(b), it is enough to color the faces of the regular $120$-cell of~$\RR^4$ with $\col+1=5$ colors so that adjacent faces receive different colors. 
This is a well-known construction which we briefly recall below; see Figure~\ref{fig:120}.

The 120-cell can be described as follows.
Let $\varphi=\frac{\sqrt{5}+1}{2}=1.618\dots$ be the golden ratio.
Let $w_1\dots, w_{120} \in \RR^4$ be the unit vectors obtained from the rows of the matrix
$$\frac{1}{2} \begin{pmatrix} 0 &  0 & 0 & 2 \\ 1&1&1&1 \\ 
0 & \varphi^{-1} & 1  & \varphi \end{pmatrix}$$
by sign changes and even permutations of the four coordinates.
We endow $\RR^4$ with its standard scalar product $\langle\cdot,\cdot\rangle_{4,0}$.
The affine hyperplanes $w_i+w_i^{\perp}$, for $1\leq i \leq 120$, cut out a regular 120-cell in~$\RR^4$.
Cells of the 120-cell are regular dodecahedra, four of which meet at each vertex, and two cells share a (pentagonal) face if and only if the dual vectors $w_i, w_j$ are \emph{neighbors}, which means by definition that $\langle w_i,w_j \rangle_{4,0} = \varphi/2$.
Each $w_i$ has 12 neighbors. 

\begin{figure}[h]
\centering
\hspace{-2cm}
\includegraphics[width = 4.4cm, trim=0 0 0 0]{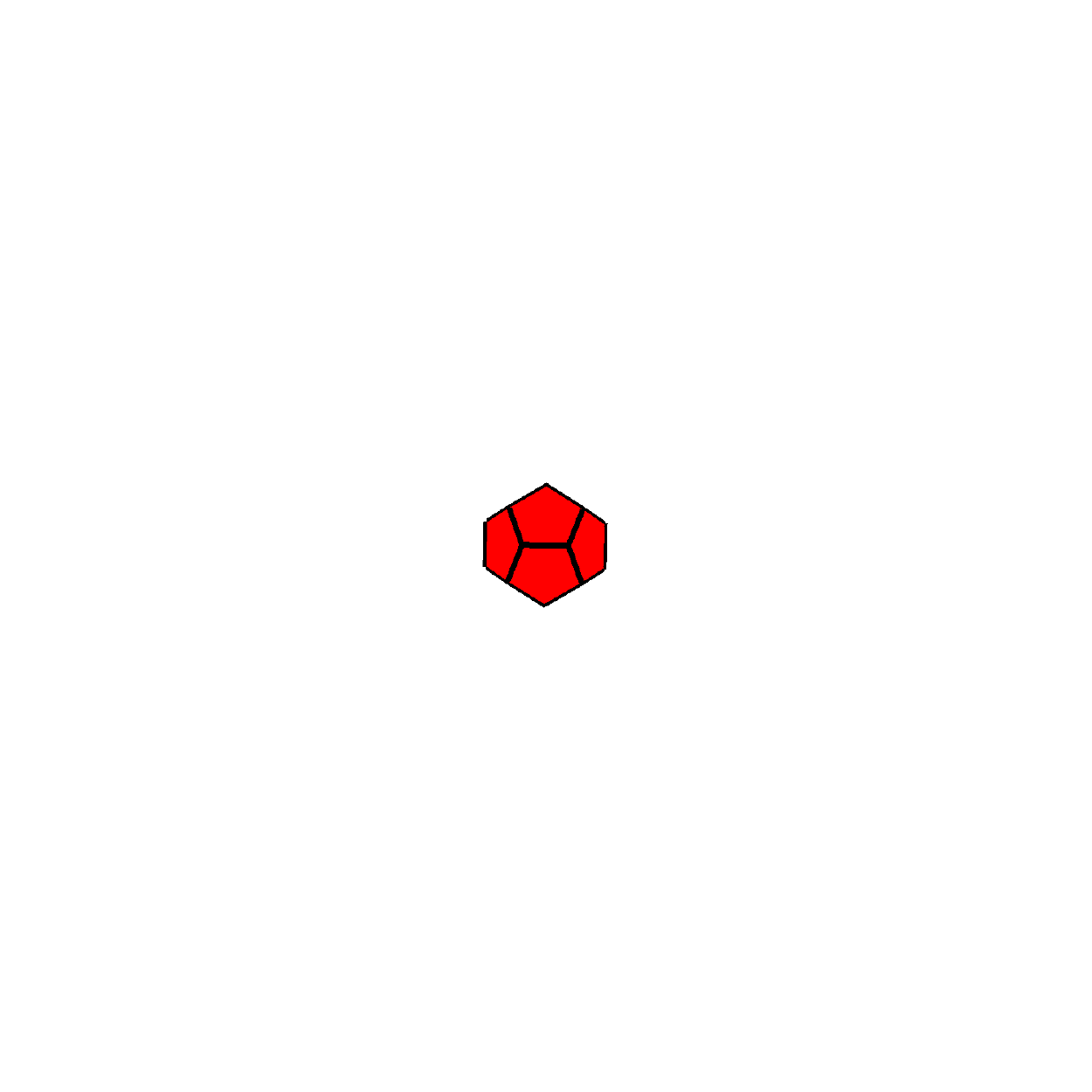}
\hspace{-3.3cm}
\includegraphics[width = 4.4cm, trim=0 0 0 0]{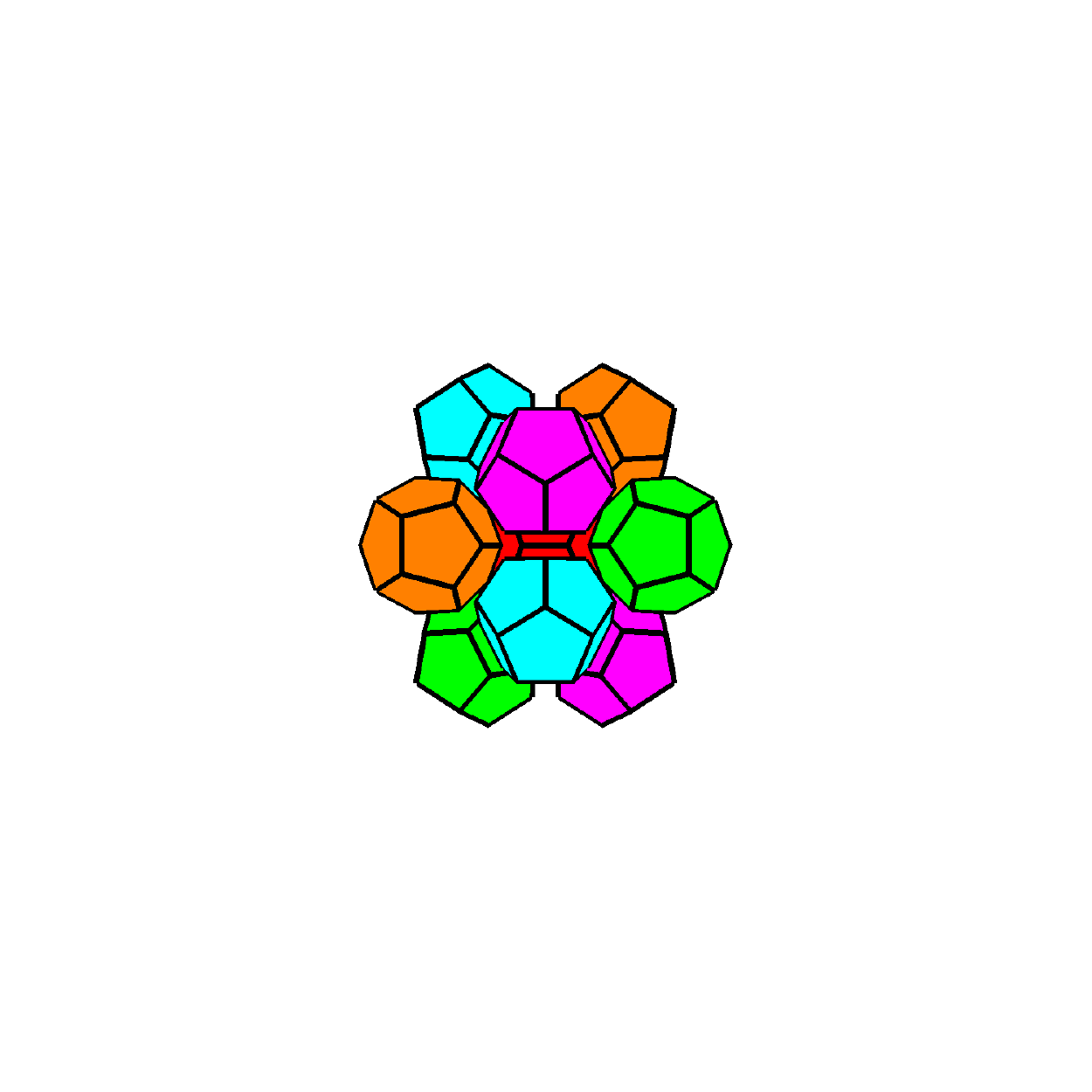}
\hspace{-2.5cm}
\includegraphics[width = 4.4cm, trim=0 0 0 0]{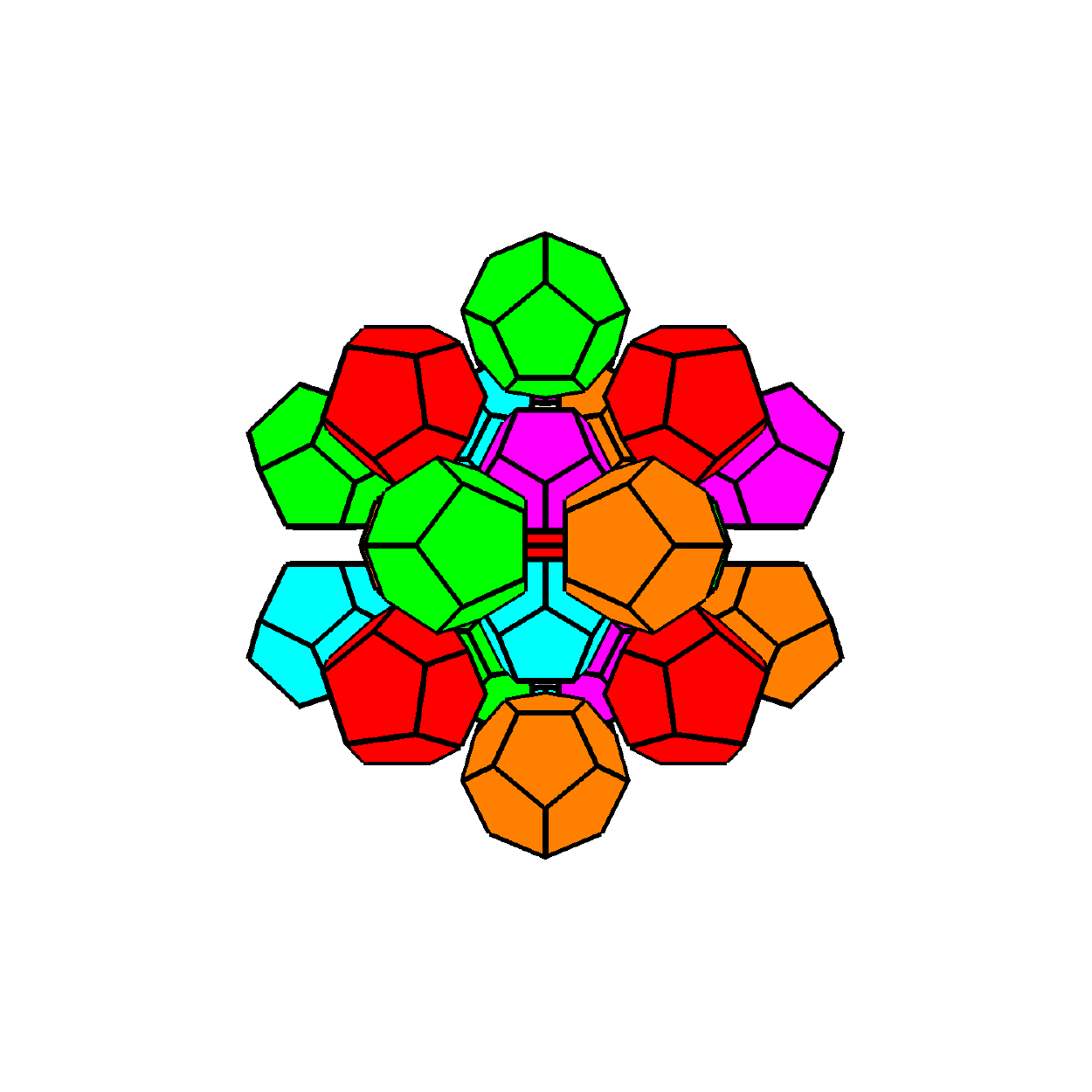}
\hspace{-1.8cm}
\includegraphics[width = 4.4cm, trim=0 0 0 0]{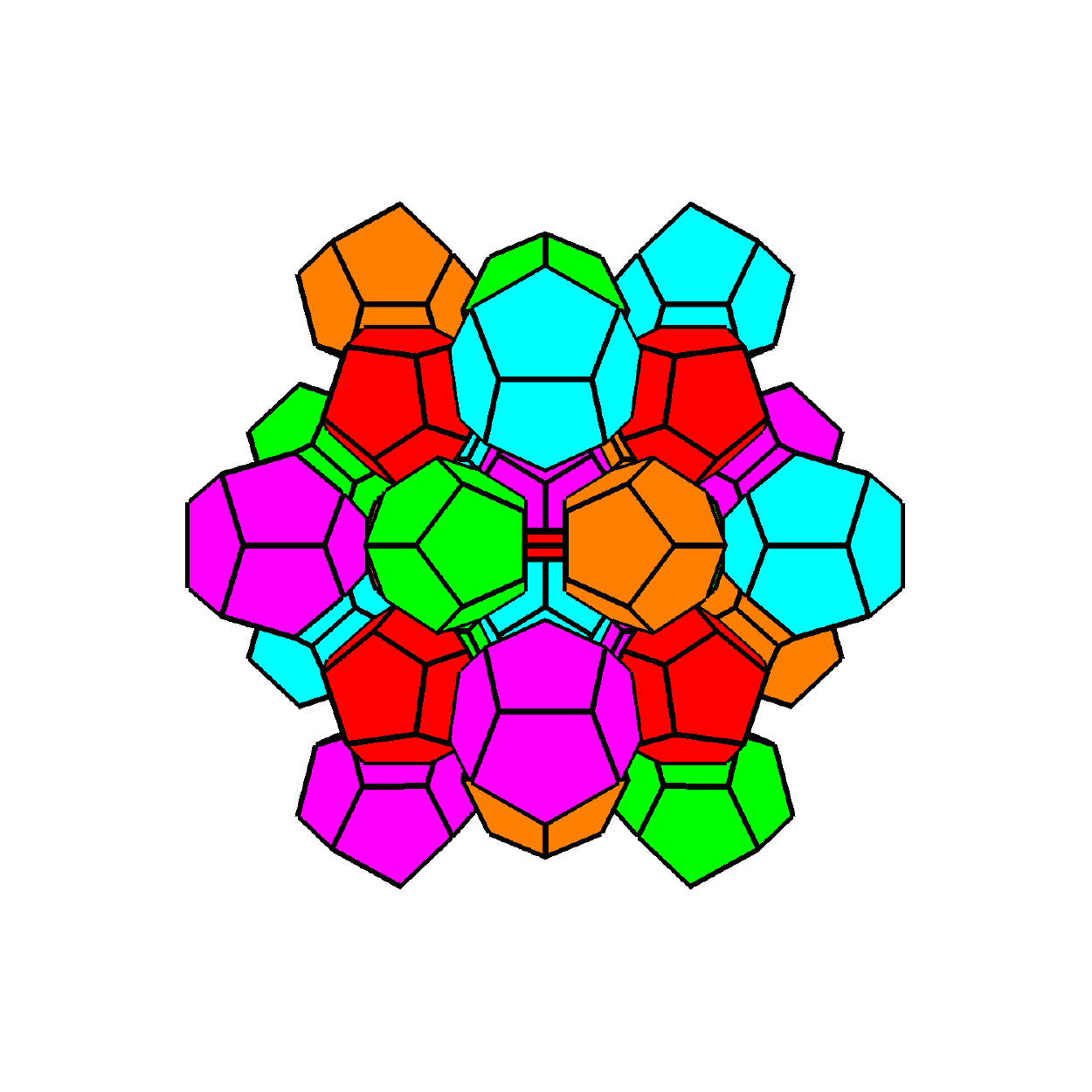}
\hspace{-0.8cm}
\includegraphics[width = 4.0cm, trim=0 -6mm 0 0 0]{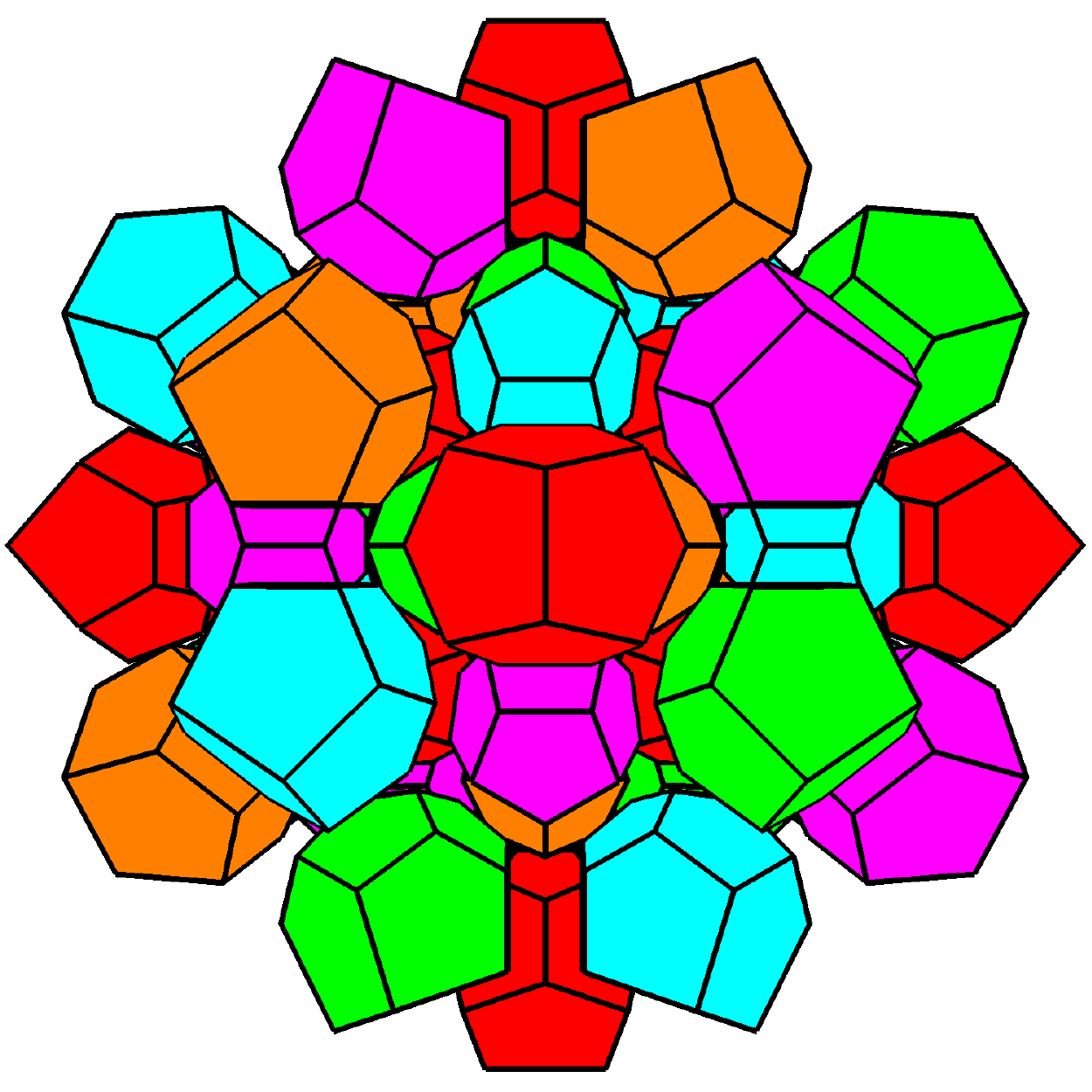}
\caption{The five-coloring of the faces of the 120-cell, stereographically projected to~$\RR^3$. Shown are five (out of nine) layers of dodecahedral cells surrounding a single red cell. The cells are pulled apart from each other a small amount to expose the internal layers.}
\label{fig:120}
\end{figure}

We now explain how to color the 120 vectors $w_i$ (\ie the corresponding cells) with $5$ colors so that no two neighbors have the same color.
Seen as unit quaternions, the $w_i$ form a group which maps surjectively, via the covering $\psi: \mathbb{S}^3 \rightarrow \SO(3)$, to the icosahedron group $\mathfrak{A}_5$ (even permutations on $5$ symbols $\{0,1,2,3,4\}$), with kernel $\{1,-1\}$.
We color each $w_i$ with the value $\sigma(i) \in \{0,1,2,3,4\}$ that the associated permutation takes at the symbol~$0$.
Any neighbors $w_i, w_j$ always have different colors: indeed, the corresponding permutations differ by a $5$-cycle, since $\mathrm{Re}(w_i^{-1} w_j) = \langle w_i,w_j \rangle_{4,0} = \varphi/2=\cos(\pi/5)$ shows that $\psi(w_i^{-1} w_j)$ has order $5$ in $\mathfrak{A}_5$.
Propositions~\ref{prop:ex-G}.(b) and~~\ref{prop:ex-affine}.(b) are proved.

\begin{remark}
As in Remark~\ref{rem:colors}.(2), if $\Gamma$ is one of the reflection groups in Propositions \ref{prop:ex-affine} and~\ref{prop:ex-G}, then $\Gamma$ admits a finite-index, torsion-free subgroup $\Gamma_1$, which is either a surface group (case~(a)) or a $4$-manifold group (case~(b)).

In case~(a), Proposition~\ref{prop:ex-affine} gives proper affine actions of the surface group~$\Gamma_1$ on $\RR^6 \cong \oo(3,1)$.
The linear part of each such affine action is the composition of a certain quasi-Fuchsian representation $\Gamma_1 \to \SO_0(3,1) \cong \PSL(2,\CC)$ with the adjoint representation $\SO_0(3,1) \to \SL(\oo(3,1))$.

By contrast, for any dimension~$n\geq 2$, an affine action on $\RR^n$ whose linear part is the composition of a Fuchsian (\ie faithful and discrete) representation $\rho_0: \Gamma_1 \to \SL(2,\RR)$ with an irreducible representation $\tau_n: \SL(2,\RR) \to \SL(n,\RR)$ is never properly discontinuous. 
This was proved by Mess~\cite{mes07} and Goldman--Margulis~\cite{gm00} in the case $n = 3$ (for which $\tau_3$ is the adjoint representation, with image $\SO_0(2,1)$) and in general by Labourie~\cite{lab01}. 
Recently, it was shown further~\cite{dz} that a continuous deformation of $\tau_n \circ \rho_0$ is never the linear part of a proper affine action, either.
Such representations make up what is known as a \emph{Hitchin component} of $\Hom(\Gamma_1, \SL(n,\RR))$. 
It would be interesting to determine which connected components of $\Hom(\Gamma_1, \SL(n,\RR))$ contain the linear part of a proper affine action by a surface group $\Gamma_1$.
\end{remark}

%%%%%%%%%%%%%%%%%%%%%%%%% (1)
\subsection{A variant of Lemma~\ref{lem:compare}}

In order to prove Theorems~\ref{thm:proper-action-g} and \ref{thm:proper-action-G} in Section~\ref{subsec:final}, we will need the following variant of Lemma~\ref{lem:compare}, in which we replace the Euclidean metric on an affine chart of $\PP^n(\RR)$ with the standard spherical metric of $\PP^n(\RR)$ given, for all $v,w\in\RR^{n+1}\smallsetminus\{0\}$, by
\begin{equation} \label{eqn:spherical-metric}
d_{\PP^n(\RR)}\big([v],[w]\big) = \min\big( \measuredangle(v,w),\measuredangle(v,-w)\big) \in \Big[0,\frac{\pi}{2}\Big],
\end{equation}
where by convention angles between vectors take values in $[0,\pi]$.

\begin{lemma} \label{lem:compare-general}
Let $(H_{\tau})_{\tau\geq 0}$ be a smooth family of connected open subsets of $\PP^n(\RR)$ with smooth boundaries $\partial H_{\tau}$.
Let $\ell \subset \PP^n(\RR)$ be an open projective segment intersecting $\partial H_0$ twice, transversely.
For small $\tau\geq 0$, let $(a_{\tau}, b_{\tau})$ be the open segment $\ell\cap H_{\tau}$, endowed with its Hilbert metric $d_{(a_{\tau}, b_{\tau})}$.
If $\partial H_{\tau}$ expands outwards everywhere with normal velocity $\geq 1$ (for $d_{\PP^n(\RR)}$) at $\tau=0$, then for all $x,y\in (a_0,b_0)$,
$$\frac{\D }{\D t}\Big|_{\tau=0} \, d_{(a_{\tau}, b_{\tau})} (x,y) \leq -2 \, d_{(a_0, b_0)} (x,y).$$
\end{lemma}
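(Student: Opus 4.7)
The plan is to reduce to a projective line, parametrize $\ell$ by spherical arclength $\phi$, and express the Hilbert metric via the sine cross-ratio on $\mathbb{P}^1(\mathbb{R})$. Denote by $\phi_a(\tau), \phi_b(\tau)$ the spherical arclength parameters of $a_\tau, b_\tau$, arranged so that $\phi_a(0) < \phi_x \leq \phi_y < \phi_b(0)$; the remaining task becomes a pointwise differential inequality on the Hilbert Finsler density.

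The main technical step, and the main obstacle, is to convert the normal-velocity bound on $\partial H_\tau$ into velocity bounds for the endpoints $a_\tau, b_\tau$. Since $a_\tau$ is confined to $\ell$ and stays on the moving hypersurface $\partial H_\tau$, the component of its velocity normal to $\partial H_\tau$ equals the normal velocity of $\partial H_\tau$ itself, which is $\geq 1$ by hypothesis. If $\theta_a \in (0, \pi/2]$ denotes the transverse angle between $\ell$ and $\partial H_0$ at $a_0$, a straightforward decomposition yields $|\dot{\phi}_a(0)| = (\text{normal velocity})/\sin \theta_a \geq 1$; since $H_\tau$ expands away from the interior segment $(a_\tau, b_\tau)$, the sign gives $\dot{\phi}_a(0) \leq -1$, and symmetrically $\dot{\phi}_b(0) \geq 1$.

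Using the cross-ratio of four points of the form $[\cos \phi_i : \sin \phi_i]$ on the projective line through $\ell$, one computes
\[
d_{(a_\tau, b_\tau)}(x, y) = \frac{1}{2} \log \frac{\sin(\phi_y - \phi_a(\tau))\, \sin(\phi_b(\tau) - \phi_x)}{\sin(\phi_x - \phi_a(\tau))\, \sin(\phi_b(\tau) - \phi_y)} = \int_{\phi_x}^{\phi_y} \nu(\phi, \tau)\, d\phi,
\]
where $\nu(\phi, \tau) := \tfrac{1}{2}\bigl(\cot(\phi - \phi_a(\tau)) + \cot(\phi_b(\tau) - \phi)\bigr)$ is the Hilbert Finsler density in the $\phi$-parametrization, positive throughout since $\cot \alpha + \cot \beta = \sin(\alpha + \beta)/(\sin \alpha\, \sin \beta) > 0$ for $\alpha, \beta, \alpha + \beta \in (0, \pi)$. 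Differentiating at $\tau = 0$ and inserting $\dot{\phi}_a(0) \leq -1$, $\dot{\phi}_b(0) \geq 1$ gives $\dot{\nu}(\phi, 0) \leq -\tfrac{1}{2}\bigl(\csc^2(\phi - \phi_a(0)) + \csc^2(\phi_b(0) - \phi)\bigr)$. The factor of $2$ then emerges from the elementary identity $\csc^2 \alpha + \csc^2 \beta = 2 + \cot^2 \alpha + \cot^2 \beta \geq 2(\cot \alpha + \cot \beta)$, immediate from $(1 - \cot \alpha)^2 + (1 - \cot \beta)^2 \geq 0$. Applying this pointwise yields $\dot{\nu}(\phi, 0) \leq -2 \nu(\phi, 0)$, and integration over $[\phi_x, \phi_y]$ produces the required inequality.
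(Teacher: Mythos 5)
Your proof is correct and follows essentially the same route as the paper: parametrize $\ell$ by spherical arclength, write the Hilbert length as an integral of the Finsler density $\nu(\phi,\tau)=\tfrac12(\cot(\phi-\phi_a)+\cot(\phi_b-\phi))$, convert the normal-velocity bound into $\dot\phi_a(0)\leq-1$, $\dot\phi_b(0)\geq 1$, and obtain the pointwise logarithmic-derivative bound $\dot\nu/\nu\leq -2$. The only cosmetic difference is your choice of elementary inequality, $(1-\cot\alpha)^2+(1-\cot\beta)^2\geq 0$, whereas the paper rewrites $\tfrac{\csc^2\alpha+\csc^2\beta}{\cot\alpha+\cot\beta}=\tfrac{\sin\alpha/\sin\beta+\sin\beta/\sin\alpha}{\sin(\alpha+\beta)}$ and invokes AM--GM together with $\sin(\alpha+\beta)\leq 1$.
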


\begin{proof}
Let $a_0,x,y,b_0 \in \ell\cap\overline{H}_0$ be lined up in this order.
Let $s\mapsto x_s$ be a unit-speed (for $d_{\PP^n(\RR)}$) parametrization of~$\ell$ such that $x=x_0$ and $y=x_{\delta}$ for some $\delta>0$.
For any small $\tau\geq 0$, we have $(a_{\tau},x,y,b_{\tau})=(x_{-\alpha_{\tau}},x_0,x_\delta, x_{\beta_{\tau}})$ for some $\alpha_{\tau}, \beta_{\tau} \in (0,\pi)$ with $\alpha_{\tau}+\beta_{\tau}<\pi$.
If $\partial H_{\tau}$ expands outwards with normal velocity $\geq 1$, then $\frac{\D}{\D\tau}\big|_{\tau=0}\,\alpha_{\tau}\geq 1$ and $\frac{\D}{\D\tau}\big|_{t=0}\,\beta_{\tau}\geq 1$.
Then
$$d_{(a_{\tau},b_{\tau})}(x,y)=\frac{1}{2}\log \left ( \frac{\tan \delta + \tan \alpha_{\tau}}{\tan \delta-\tan \beta_{\tau}} \middle / \frac{0 + \tan \alpha_{\tau}}{0 -\tan \beta_{\tau}} \right ) \underset{\delta\to 0}\sim \delta \, \frac{\cot \alpha_{\tau} + \cot \beta_{\tau} }{2}~.$$
The factor $\nu^{\tau}_{\ell, x} := (\cot \alpha_{\tau}+\cot \beta_{\tau})/2$ expresses the Hilbert metric $d_{(a_{\tau},b_{\tau})}$ near~$x$ in terms of the ambient spherical metric, in the $\ell$ direction. 
Since $\cot'=-\sin^{-2}$, its logarithmic derivative at $\tau=0$ satisfies
\begin{align*}
\frac{{\displaystyle\frac{\D}{\D\tau}\Big|_{\tau=0}\, \nu^{\tau}_{\ell, x}}}{\nu^{0}_{\ell, x}} & = \frac{- \big(\frac{\D}{\D\tau}\big|_{\tau=0}\, \alpha_{\tau}\big) \sin^{-2}\alpha_0 - \big(\frac{\D}{\D\tau}\big|_{\tau=0}\,\beta_{\tau}\big) \sin^{-2} \beta_0}{\cot \alpha_0 + \cot \beta_0} \\ 
 & \leq - \; \frac{\sin^{-2}\alpha_0+ \sin^{-2} \beta_0}{\cot \alpha_0 + \cot \beta_0} = - \; \frac{\frac{\sin \alpha_0}{\sin \beta_0} + \frac{\sin \beta_0}{\sin \alpha_0}}{\sin (\alpha_0+\beta_0)} \leq -2.
\end{align*}
Thus $(\D \nu^{\tau}_{\ell, x}/\D\tau)|_{\tau=0} \leq -2 {\nu^{0}_{\ell, x}}$ for all $x\in \ell\cap H_0$. Integrating $\nu^{\tau}_{\ell, x}$ for $x$ in a subsegment $\sigma \subset \ell\cap H_{\tau}$ returns the Hilbert length of $\sigma$ in $\ell\cap H_{\tau}$; the result follows by exchanging the integration and differentiation.
\end{proof}

%%%%%%%%%%%%%%%%%%%%%%%%%%%%%%%%%%%%%%%%%%%%%%%%%%%
\section{Pseudo-Riemannian contraction and properness} \label{sec:pseudo-Riem-contraction}

In Propositions~\ref{prop:coarse-proj-G} and~\ref{prop:coarse-proj-g} we established sufficient properness conditions for actions of a discrete group $\Gamma$ on a topological group $G$ by right-and-left multiplication, and on a finite-dimensional Lie algebra~$\g$ by affine transformations through the adjoint action.
These conditions involved a notion of coarse uniform contraction in a metric $G$-space $(\mathbb{X},d)$.

Fixing integers $p,q\in\NN$ with $p+q\geq 1$, we shall now state and prove sufficient properness conditions (Theorem~\ref{thm:contract-proper}) for similar actions on $G=\OO(p,q+1)$ and on $\g=\mathfrak{o}(p,q+1)\simeq\RR^{(p+q+1)(p+q)/2}$. This will be used in Section~\ref{sec:space-contr-Coxeter} to prove Theorems \ref{thm:proper-action-g} and~\ref{thm:proper-action-G}.

%%%%%%%%%%%%%%%%%%%%%%%%%
\subsection{Uniform spacelike contraction in~$\HH^{p,q}$ and proper actions} \label{subsec:spacelike-contract-def}

In order to state Theorem~\ref{thm:contract-proper}, we first introduce a notion of \emph{spacelike} coarse uniform contraction in the pseudo-Riemannian space $\HH^{p,q}$, endowed with the \emph{pseudo-metric} $d_{\HH^{p,q}}$ of Notation~\ref{not:d-Hpq}.

Let $G=\OO(p,q+1)$.
Given $(\rho,u):\Gamma\rightarrow G\ltimes \g$, we say that a vector field $Z$ defined on a $\rho(\Gamma)$-invariant subset $\mathcal{O}$ of $\HH^{p,q}$ is $(\rho, u)$-\emph{equivariant} if $Z$ satisfies~\eqref{eqn:equivar} for all $\gamma\in\Gamma$ and $x\in \mathcal{O}$. From now on, we will drop the map $\Psi$ of~\eqref{eqn:equivar} from the notation, as it is a canonical isomorphism between the Lie algebra $\g=\mathfrak{o}(p,q+1)$ and the space of Killing fields on~$\HH^{p,q}$.
We introduce the following terminology extending Definitions \ref{def:contract-equivar-deform-G} and~\ref{def:contract-equivar-deform-g}.

\begin{definition} \label{def:contract-spacelike}
Let $\Gamma$ be a discrete group and $\rho : \Gamma\to G=\OO(p,q+1)$ a representation with finite kernel and discrete image, preserving a nonempty properly convex open subset $\Omega$ of $\HH^{p,q}$.
\begin{enumerate}
  \item\label{def:contract-Hpq-G} A representation $\rho' : \Gamma\to G$ is \emph{coarsely uniformly contracting in spacelike directions} with respect to $(\rho, \Omega)$ if there exist a closed nonempty $\rho(\Gamma)$-invariant subset $\mathcal{O}$ of~$\Omega$ (\eg $\Omega$ itself, or a single $\rho(\Gamma)$-orbit) and a continuous $(\rho,\rho')$-equivariant map $f : \mathcal{O}\to\HH^{p,q}$ which is \emph{coarsely $\Kap$-Lipschitz in spacelike directions} for some $\Kap<1$, \ie there exists $\Kap'\in \RR$ such that for all $x,y\in\mathcal{O}$ on a spacelike line,
  $$d_{\HH^{p,q}}(f(x),f(y)) \leq \Kap\,d_{\HH^{p,q}}(x,y) + \Kap' \, .$$
  If we can take $\Kap'=0$, then we say that $f$ is \emph{$\Kap$-Lipschitz in spacelike directions} and $\rho'$ is \emph{uniformly contracting in spacelike directions} with respect to $(\rho,\Omega)$.
  \item\label{def:contract-Hpq-g} A $\rho$-cocycle $u : \Gamma\to\g$ is \emph{coarsely uniformly contracting in spacelike directions} with respect to~$\Omega$ if there are a closed nonempty $\rho(\Gamma)$-invariant subset $\mathcal{O}$ of~$\Omega$, and a continuous $(\rho,u)$-equivariant vector field $Z : \mathcal{O}\to T\HH^{p,q}$  on~$\mathcal{O}$ which is \emph{coarsely $\kap$-lipschitz in spacelike directions} for some $\kap<0$, \ie there exists $\kap'\in\RR$ such that for all $x,y\in\mathcal{O}$ on a spacelike line,
  $$\frac{\D}{\D t}\Big|_{t=0} \: d_{\HH^{p,q}}\left ( \exp_x(tZ(x)), \exp_y(tZ(y)) \right ) \leq \kap \, d_{\HH^{p,q}}(x,y) + \kap' \, .$$
If we can take $\kap'=0$, then we say that $Z$ is \emph{$\kap$-lipschitz in spacelike directions} and $u$ is \emph{uniformly contracting in spacelike directions} with respect to~$\Omega$.
\end{enumerate}
\end{definition}

Of course there are always \emph{two} projective segments between any given pair $(x,y)$ of points of $\PP(\RR^{p+q+1})$.
For $x,y\in\HH^{p,q}$, if one of these segments is a spacelike geodesic segment of $\HH^{p,q}$, then the other projective segment exits~$\HH^{p,q}$.
In Definition~\ref{def:contract-spacelike}, for $x,y\in\mathcal{O}\subset\Omega\subset\HH^{p,q}$ on a spacelike line, the segment between $x$ and~$y$ that remains in~$\HH^{p,q}$ also remains in the properly convex set~$\Omega$.

The following statement is similar to Lemma~\ref{lem:derivate} and its proof is identical, restricted to pairs of points in spacelike position.

\begin{lemma} \label{lem:derivate-Hpq}
Consider an open interval $I\ni 0$, a smooth path $(\rho_\tau)_{\tau\in I}$ in $\mathrm{Hom}(\Gamma, \OO(p,q+1))$, and the $\rho_0$-cocycle $u := \frac{\D}{\D \tau}\big |_{\tau=0}\,\rho_{\tau} \rho_0^{-1}$.
For any smooth family $(f_\tau : \Omega \to \HH^{p,q})_{\tau\in I}$ of maps such that $f_0=\mathrm{Id}_{\Omega}$ and $f_\tau$ is $(\rho_0, \rho_\tau)$-equivariant for all $\tau\in I$, the derivative $Z(x):=\frac{\D}{\D \tau} {\big |}_{\tau=0}\, f_\tau(x)$ is $(\rho_0,u)$-equivariant.
If moreover there exists $\kap\in\RR$ such that $f_\tau$ is $(1+\kap\tau)$-Lipschitz in spacelike directions for all $\tau\geq 0$, then $Z$ is $\kap$-lipschitz in spacelike directions.
\end{lemma}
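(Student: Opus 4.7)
The plan is to imitate the proof of Lemma \ref{lem:derivate}, restricting attention to spacelike pairs at the contraction step; the author actually says as much. First I would establish the $(\rho_0,u)$-equivariance of $Z$ by differentiating the equivariance relation for $f_\tau$. Writing
$$f_\tau(\rho_0(\gamma)\cdot x) \;=\; \bigl(\rho_\tau(\gamma)\rho_0(\gamma)^{-1}\bigr)\cdot \rho_0(\gamma)\cdot f_\tau(x)$$
and applying $\frac{\D}{\D \tau}\big|_{\tau=0}$ by Leibniz, the left side yields $Z(\rho_0(\gamma)\cdot x)$, while on the right the prefactor $\rho_\tau(\gamma)\rho_0(\gamma)^{-1}$ equals the identity at $\tau=0$ with $\tau$-derivative $u(\gamma)$ by definition, contributing $u(\gamma)(\rho_0(\gamma)\cdot x)$ at the basepoint $\rho_0(\gamma)\cdot f_0(x) = \rho_0(\gamma)\cdot x$, while the variation of $f_\tau(x)$ contributes $\rho_0(\gamma)_* Z(x)$. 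This is precisely \eqref{eqn:equivar}.

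Next, for the contraction bound, I would fix $x,y\in\Omega$ on a spacelike line and note that the paths $\tau\mapsto f_\tau(x)$ and $\tau\mapsto \exp_x(\tau Z(x))$ have identical first-order jets at $\tau=0$, since $f_0=\mathrm{Id}_\Omega$ and $Z(x) = \frac{\D}{\D\tau}|_{\tau=0} f_\tau(x)$; similarly for $y$. Because spacelike position is an open condition in $\HH^{p,q}\times\HH^{p,q}$, the two pairs $(f_\tau(x),f_\tau(y))$ and $(\exp_x(\tau Z(x)),\exp_y(\tau Z(y)))$ both remain spacelike for small $\tau\geq 0$, so $d_{\HH^{p,q}}$ is given by the smooth formula \eqref{eqn:d-Hpq-inner-prod} on a neighborhood of $\tau=0$. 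Applying Proposition \ref{prop:deriv-dHpq} to each of these two families identifies their $\tau$-derivatives at $\tau=0$, so
$$\frac{\D}{\D \tau}\Big|_{\tau=0} d_{\HH^{p,q}}\bigl(\exp_x(\tau Z(x)),\exp_y(\tau Z(y))\bigr) \;=\; \frac{\D}{\D \tau}\Big|_{\tau=0} d_{\HH^{p,q}}(f_\tau(x), f_\tau(y)).$$

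To conclude, I would exploit that the Lipschitz hypothesis $d_{\HH^{p,q}}(f_\tau(x), f_\tau(y)) \leq (1+\kap\tau)\, d_{\HH^{p,q}}(x,y)$ becomes an equality at $\tau=0$, so its right $\tau$-derivative at $\tau=0$ is bounded above by $\kap\,d_{\HH^{p,q}}(x,y)$; combining with the preceding display yields the definition of $\kap$-lipschitz in spacelike directions. I do not expect any serious obstacle: the only subtlety is checking that $d_{\HH^{p,q}}$ is smooth along the deformations so that Proposition \ref{prop:deriv-dHpq} applies, and this reduces to the openness of the spacelike condition together with $f_0 = \mathrm{Id}_\Omega$. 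Everything else is a verbatim transcription of the proof of Lemma \ref{lem:derivate}, with Proposition \ref{prop:deriv-dHpq} used in place of Remark \ref{rem:hadamard}.
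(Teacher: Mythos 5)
Your proof is correct and takes essentially the same route as the paper: the paper literally says the proof of this lemma ``is identical [to that of Lemma~\ref{lem:derivate}], restricted to pairs of points in spacelike position,'' and you have carried out exactly that transcription, substituting Proposition~\ref{prop:deriv-dHpq} for Remark~\ref{rem:hadamard}. Your added observation that the spacelike condition is open (so $d_{\HH^{p,q}}$ is smooth near $(x,y)$ and Proposition~\ref{prop:deriv-dHpq} applies to both families) is a useful detail that the paper leaves implicit.
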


Here is the main result of this section, generalizing Propositions~\ref{prop:coarse-proj-G}~and~\ref{prop:coarse-proj-g}.

\begin{theorem} \label{thm:contract-proper}
Let $G=\OO(p,q+1)$ for $p,q\in\NN$ with $p+q>1$.
Let $\Gamma$ be a discrete group and $\rho : \Gamma\to G$ a representation with finite kernel and discrete image, preserving a nonempty properly convex open subset $\Omega$ of $\HH^{p,q}$.
\begin{enumerate}
  \item\label{item:Hpq-contract-G} Let $\rho' : \Gamma\to G$ be a strongly irreducible representation such that $\rho'(\Gamma)$ contains a proximal element. 
  If $\rho'$ is coarsely uniformly contracting in spacelike directions with respect to $(\rho,\Omega)$, then the action of $\Gamma$ on $G$ by right-and-left multiplication via $(\rho,\rho')$ is properly discontinuous.
  \item\label{item:Hpq-contract-g} Let $u : \Gamma\to\g$ be a $\rho$-cocycle.
  If $u$ is coarsely uniformly contracting in spacelike directions with respect to~$\Omega$, then the affine action of $\Gamma$ on $\g \simeq \RR^{(p+q+1)(p+q)/2}$ via $(\rho,u)$ is properly discontinuous.
\end{enumerate}
\end{theorem}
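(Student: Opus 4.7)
The two parts are proved in parallel, adapting the coarse projection strategy of Propositions~\ref{prop:coarse-proj-G} and~\ref{prop:coarse-proj-g}. The essential new obstacle is that $d_{\HH^{p,q}}$ is only a pseudo-metric: it vanishes on lightlike and timelike pairs and violates the triangle inequality, so one cannot directly project $G$ or $\g$ onto $\mathcal{F}(\mathcal{O})$ by minimizing $d_{\HH^{p,q}}$. Instead, the plan is to convert the spacelike coarse contraction hypothesis into a strict inequality between top eigenvalues/singular values of $\rho'(\gamma)$ and $\rho(\gamma)$ (resp.\ between growth of $u(\gamma)$ and $\rho(\gamma)$), via the orbit-growth estimates of Section~\ref{subsec:remind-lambda1}.

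For part~\eqref{item:Hpq-contract-G}, I would argue by contradiction: suppose there exist $\gamma_n \to \infty$ in $\Gamma$ and $g_n, h_n \in \mathscr{C}$, a compact subset of $G$, with $h_n = \rho'(\gamma_n)\, g_n\, \rho(\gamma_n)^{-1}$. Submultiplicativity of the operator norm in \eqref{eqn:muis} yields $|\mu_1(\rho'(\gamma_n)) - \mu_1(\rho(\gamma_n))| \leq C_0$. Using strong irreducibility of $\rho'$ and the existence of a proximal element, Fact~\ref{fact:AMS} lets us replace each $\gamma_n$ by $\gamma_n \varphi_n$ for $\varphi_n$ in a fixed finite set, making $\rho'(\gamma_n)$ proximal with $\lambda_1(\rho'(\gamma_n)) \geq \mu_1(\rho'(\gamma_n)) - C_{\rho'}$. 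Fix $y \in \mathcal{O}$ such that $f(y)$ avoids the countable union of projective hyperplanes $(\xi_{\rho'(\gamma_n)}^{\pm})^\perp$; this is possible since these have measure zero in $\HH^{p,q}$. Then, for large $N$ and each fixed~$n$, the escaping hypothesis (to be supplied by Lemma~\ref{lem:spacelike-Gamma-orbit}) ensures that $y$ and $\rho(\gamma_n)^N y$ lie on a spacelike line; by equivariance of $f$ and coarse $C$-Lipschitzness in spacelike directions,
\[
d_{\HH^{p,q}}\!\big(f(y),\, \rho'(\gamma_n)^N f(y)\big) \;=\; d_{\HH^{p,q}}\!\big(f(y),\, f(\rho(\gamma_n)^N y)\big) \;\leq\; C\, d_{\HH^{p,q}}\!\big(y,\, \rho(\gamma_n)^N y\big) + C'.
\]
Dividing by $N$ and letting $N\to\infty$, Lemma~\ref{lem:orbit-growth}.\eqref{item:growth-prox} for the left-hand side and Lemma~\ref{lem:orbit-growth}.\eqref{item:growth-general} for the right-hand side yield $\lambda_1(\rho'(\gamma_n)) \leq C\,\lambda_1(\rho(\gamma_n)) \leq C\,\mu_1(\rho(\gamma_n))$. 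Combining with $\lambda_1(\rho'(\gamma_n)) \geq \mu_1(\rho(\gamma_n)) - C_0 - C_{\rho'}$ gives $(1-C)\,\mu_1(\rho(\gamma_n)) \leq C_0 + C_{\rho'}$, bounded. This contradicts $\mu_1(\rho(\gamma_n)) \to +\infty$, which holds because $\rho$ has finite kernel and discrete image.

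For part~\eqref{item:Hpq-contract-g}, the plan is the infinitesimal counterpart. Assume, for contradiction, a sequence $\gamma_n\to\infty$ and $Y_n$, $W_n$ in a compact subset of $\g$ with $W_n = \Ad(\rho(\gamma_n))\,Y_n + u(\gamma_n)$. The $(\rho,u)$-equivariance~\eqref{eqn:equivar} combined with \eqref{eqn:pushforward-Psi} gives the pointwise identity $(Z - W_n)(\rho(\gamma_n)\cdot x) = \rho(\gamma_n)_*(Z - Y_n)(x)$ for all $x \in \mathcal{O}$; moreover, by Proposition~\ref{prop:deriv-dHpq} the vector fields $Z - Y_n$ and $Z - W_n$ remain coarsely $\kap$-lipschitz in spacelike directions, since adding a Killing field changes neither side of the first-variation formula. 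Iterating the contraction estimate along the orbit $x,\rho(\gamma_n)x,\dots,\rho(\gamma_n)^N x$ should yield, after $N\to\infty$, a bound comparing the exponential growth rate of $\|u(\gamma_n)\|$ (measured by the top eigenvalue of $u(\gamma_n)$ acting on $\RR^{p+q+1}$) to $C$ times the corresponding quantity for $\rho(\gamma_n)$. Applied to a suitable ``proximal-like'' reduction of $u(\gamma_n)$ (an infinitesimal analog of Fact~\ref{fact:AMS}, or a direct boundedness argument on the compact locus of $(Y_n,W_n)$), this again forces $\mu_1(\rho(\gamma_n))$ to remain bounded and contradicts $\gamma_n\to\infty$.

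The main technical hurdle, common to both parts, is verifying that the relevant pairs $(y,\rho(\gamma_n)^N y)$ and $(f(y),\rho'(\gamma_n)^N f(y))$ are actually in spacelike position, so that the contraction hypothesis is activated and $d_{\HH^{p,q}}$ genuinely measures distance rather than vanishing by convention. This is precisely the geometric content of the anticipated Lemma~\ref{lem:spacelike-Gamma-orbit}, which exploits the preservation of the properly convex $\Omega\subset\HH^{p,q}$ by $\rho(\Gamma)$ and the behavior of proximal elements on the boundary quadric; everything else is a bookkeeping of eigenvalue/singular-value inequalities. A secondary, more subtle issue is the infinitesimal reduction used in part~\eqref{item:Hpq-contract-g}, where no direct analog of strong irreducibility on cocycles is available and one must use the linear structure of the compact locus in $\g$ instead.
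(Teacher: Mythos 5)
Your treatment of part~\eqref{item:Hpq-contract-G} is essentially the paper's argument (Proposition~\ref{prop:HtoX-contraction}): use Fact~\ref{fact:AMS} to reduce to proximal elements, Lemma~\ref{lem:spacelike-Gamma-orbit} to activate the spacelike hypothesis for large powers, and Lemma~\ref{lem:orbit-growth} to get $\lambda_1(\rho'(\gamma))\leq \Kap\,\lambda_1(\rho(\gamma))$, then pass to $\mu_1$. The paper phrases this not as a contradiction but as establishing coarse uniform contraction on the Riemannian symmetric space $\mathbb{X}$ with its Finsler metric~\eqref{eqn:finsler}, so that Proposition~\ref{prop:coarse-proj-G} applies directly. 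One genuine flaw in your version: you choose a single $y\in\mathcal{O}$ with $f(y)$ avoiding the countable union $\bigcup_n (\xi_{\rho'(\gamma_n)}^{\pm})^\perp$ by a measure-zero argument, but Definition~\ref{def:contract-spacelike} allows $\mathcal{O}$ to be a single $\rho(\Gamma)$-orbit, which itself has measure zero; the measure argument then says nothing. The paper instead fixes $\gamma$ and argues that if $f(\mathcal{O})\subset H^+\cup H^-$ then the Zariski closure of $f(\mathcal{O})$, being $\rho'(\Gamma)$-invariant and a finite union of irreducibles each inside $H^+$ or $H^-$, would contradict strong irreducibility; this works for any nonempty $\mathcal{O}$. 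The fix is easy (choose $x$ depending on~$\gamma$) but the reasoning you gave does not hold as stated.

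For part~\eqref{item:Hpq-contract-g} there is a more serious gap, and you partly acknowledge it. You set up the correct pointwise identity $(Z-\Psi(W_n))(\rho(\gamma_n)\cdot x)=\rho(\gamma_n)_*(Z-\Psi(Y_n))(x)$, but the proposed continuation --- iterating the contraction to compare growth rates of $\|u(\gamma_n)\|$ and $\mu_1(\rho(\gamma_n))$, then invoking an ``infinitesimal analog of Fact~\ref{fact:AMS}'' or ``the linear structure of the compact locus'' --- is not a concrete argument and, as you note, has no obvious proximality mechanism. The paper's route (Proposition~\ref{prop:coarse-proj-g-pq}) is different and simpler: it \emph{does} build an equivariant projection $\pi:\g\to\mathcal{F}(\mathcal{O})$ directly, contrary to your diagnosis that the pseudo-metric forbids this. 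The trick is to minimize $\Vert(Z-Y)(x)\Vert_x$ over $x\in\mathcal{O}$ for a continuous $\rho$-invariant family of genuine norms $(\Vert\cdot\Vert_x)_{x\in\mathcal{O}}$ on the tangent spaces, not $d_{\HH^{p,q}}$ itself. Properness of $x\mapsto\Vert(Z-Y)(x)\Vert_x$ is then extracted from the spacelike coarse $\kap$-lipschitz bound plus the first-variation formula (Proposition~\ref{prop:deriv-dHpq}) together with Lemma~\ref{lem:spacelike-Gamma-orbit}.\eqref{item:Kx}, which gives a uniform compact cone $\mathcal{K}_x\subset T_x^{+1}\HH^{p,q}$ of spacelike escape directions with $|\mathsf{g}^{p,q}_x(w,v)|\leq R\,\Vert w\Vert_x$ for $v\in\mathcal{K}_x$; this yields $R\,\Vert(Z-Y)(y)\Vert_y\geq|\kap|\,d_{\HH^{p,q}}(x,y)-(\kap'+R\,\Vert(Z-Y)(x)\Vert_x)$ and hence properness by Lemma~\ref{lem:spacelike-Gamma-orbit}.\eqref{item:flight}. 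Equivariance is checked exactly as in Proposition~\ref{prop:coarse-proj-g}. Thus the projection strategy you discarded is in fact the one that succeeds, once the minimizing functional is replaced by something that is an honest norm yet interacts with $d_{\HH^{p,q}}$ via the first-variation formula.
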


See Section~\ref{subsec:remind-lambda1} for the notions of proximality and strong irreducibility.
We shall prove the infinitesimal statement~\eqref{item:Hpq-contract-g} in Section~\ref{subsec:proof-pseudo-Riem-contract-proper-g}, and the statement~\eqref{item:Hpq-contract-G} first in Section~\ref{subsec:proof-pseudo-Riem-contract-proper-G}.

%%%%%%%%%%%%%%%%%%%%%%%%%
\subsection{A preliminary lemma: actions on convex subsets of $\HH^{p,q}$}

For $x\in\HH^{p,q}$, we denote by $T_x^{+1}\HH^{p,q}$ the set of unit spacelike tangent vectors at~$x$; it is isometric to the quadric $\{v\in\RR^{p,q}~|~\langle v, v \rangle_{p,q}=+1 \}$.

\begin{lemma} \label{lem:spacelike-Gamma-orbit}
Let $\Gamma$ be a discrete group and $\rho : \Gamma\to\OO(p,q+1)$ a representation with finite kernel and discrete image, preserving a nonempty properly convex open subset $\Omega$ of $\HH^{p,q}\subset\PP(\RR^{p,q+1})$.
For any compact subset $\mathcal{D}$ of~$\Omega$,
\begin{enumerate}
  \item\label{item:accum} all accumulation points of the $\rho(\Gamma)$-orbit of~$\mathcal{D}$ are contained in $\partial\HH^{p,q}$;
  \item\label{item:Kx} there exists a bounded family of compact sets $\mathcal{K}_x \subset T_x^{+1}\HH^{p,q}$, for $x$ ranging over~$\mathcal{D}$, such that for all but finitely many $\gamma\in \Gamma$,
  $$\rho(\gamma) \cdot \mathcal{D} \subset \bigcap_{x\in\mathcal{D}} \exp_x(\RR^+ \mathcal{K}_x);$$
  \item\label{item:flight} in particular, if $(\gamma_n)_{n\in \NN}$ goes to infinity in $\Gamma$ (\ie leaves every finite subset of~$\Gamma$), then for any sequences $(x_n)_{n\in \NN}$, $ (x'_n)_{n\in \NN}$ of $\mathcal{D}$ we have $d_{\HH^{p,q}}(x_n, \rho(\gamma_n)\cdot x'_n) \rightarrow +\infty$.
\end{enumerate}
\end{lemma}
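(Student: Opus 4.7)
The backbone of the argument is the general fact, recalled in Section~2.1, that a discrete subgroup of $\mathrm{Aut}(\Omega)$ acts properly discontinuously on a properly convex domain $\Omega$ for the Hilbert metric $d_\Omega$. Since $\rho$ has discrete image in $\OO(p,q+1)$ and $\OO(p,q+1)/\{\pm I\}$ is closed in $\PGL(\RR^{p,q+1})$, the image $\rho(\Gamma)$ is discrete in $\mathrm{Aut}(\Omega)$ and so acts properly discontinuously on $(\Omega,d_\Omega)$. This immediately forces any accumulation point $z$ of $\rho(\Gamma)\cdot \mathcal D$ to lie in $\overline{\Omega}\setminus \Omega = \partial \Omega \subset \overline{\HH^{p,q}}$.

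For \eqref{item:accum}, what remains is to rule out accumulation at points of $\partial\Omega\cap\HH^{p,q}$. Suppose toward contradiction that $\rho(\gamma_n)\cdot y_n\to z\in\HH^{p,q}$ with $y_n\in\mathcal D$ and $\gamma_n\to\infty$. Lift to $\widehat y_n\in\widehat{\HH}^{p,q}$ converging in a compact set, and to $\widehat z$; then $\rho(\gamma_n)\widehat y_n\to \pm\widehat z$ remains bounded in $\RR^{p,q+1}$. Since $\rho$ has finite kernel and discrete image, $\mu_1(\rho(\gamma_n))\to\infty$. Applying a Cartan decomposition $\rho(\gamma_n)=k_n a_n k_n'$ with $k_n,k_n'\in\OO(p)\times\OO(q+1)$ and $a_n$ diagonal, and passing to a subsequence so $k_n,k_n'$ converge, the image $\|\rho(\gamma_n)\widehat v\|$ tends to $\infty$ for all $\widehat v$ outside a proper linear subspace $W\subset\RR^{p,q+1}$; for such $\widehat v$ the direction $[\rho(\gamma_n)\widehat v]$ accumulates on an isotropic line, i.e.\ on $\partial\HH^{p,q}$. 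Thus the boundedness of $\rho(\gamma_n)\widehat y_n$ forces $\widehat y$ to lie in $W$, and therefore $z\in \PP(W)\cap\partial\Omega$. Using that $\PP(W)\cap\overline\Omega$ is a proper face of $\overline\Omega$ preserved by a subsequence of $\rho(\gamma_n)$, one derives a contradiction with $\gamma_n\to\infty$ combined with proper convexity of $\Omega$ and discreteness of $\rho(\Gamma)$. This step is the hard part.

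Granted \eqref{item:accum}, item \eqref{item:Kx} is a projective-geometric consequence. Fix $x\in\mathcal D$. By \eqref{item:accum}, outside a finite set $F\subset \Gamma$ the set $\rho(\gamma)\cdot\mathcal D$ lies in a prescribed small neighborhood of $\partial\HH^{p,q}$; for any $y\in\mathcal D$, the projective line through $x$ and $\rho(\gamma)\cdot y$ then meets $\partial\HH^{p,q}$ transversally at two points, so the segment from $x$ to $\rho(\gamma)\cdot y$ is spacelike and we can write $\rho(\gamma)\cdot y=\exp_x(t_{x,\gamma,y}\, u_{x,\gamma,y})$ for some $t_{x,\gamma,y}>0$ and some unit spacelike $u_{x,\gamma,y}\in T_x^{+1}\HH^{p,q}$. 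The set $\mathcal K_x := \overline{\{u_{x,\gamma,y} : \gamma\in\Gamma\setminus F,\ y\in\mathcal D\}}$ is compact: its only accumulation directions are the ones pointing from $x$ to the (compact) closure of the orbit inside $\partial\HH^{p,q}$, all of which are spacelike because $x\in\HH^{p,q}$ and a line from $x$ to a transverse boundary point exits the quadric twice. Continuity in $x$ over the compact~$\mathcal D$ gives a bounded family~$(\mathcal K_x)_{x\in\mathcal D}$.

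Finally, \eqref{item:flight} is immediate from \eqref{item:Kx} and \eqref{item:accum}: writing $\rho(\gamma_n)\cdot x'_n=\exp_{x_n}(t_n u_n)$ with $u_n$ in the bounded family of unit spacelike vectors from \eqref{item:Kx} and $t_n=d_{\HH^{p,q}}(x_n,\rho(\gamma_n)\cdot x'_n)$, if $t_n$ stayed bounded along a subsequence, then $\rho(\gamma_n)\cdot x'_n$ would remain in a compact subset of $\HH^{p,q}$ bounded away from $\partial\HH^{p,q}$, contradicting \eqref{item:accum}. Hence $t_n\to+\infty$. The only real work, as noted, is in step~\eqref{item:accum}; the remaining parts reduce to the inner-product and cross-ratio formulas for $d_{\HH^{p,q}}$ from Section~2.2.
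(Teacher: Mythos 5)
Your proposal has two genuine gaps, both at the points where the paper does the real work.

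\textbf{Gap in item \eqref{item:accum}.} You set up a Cartan decomposition argument, locate a proper subspace $W$, conclude that the putative limit $\widehat y$ must lie in $W$, and then write ``one derives a contradiction with $\gamma_n\to\infty$ combined with proper convexity of $\Omega$ and discreteness of $\rho(\Gamma)$. This step is the hard part.'' That sentence \emph{is} the missing proof; nothing in the sketch before it forces the contradiction. The paper's route is shorter and closes completely: lift $x_n$ to $\widehat x_n$, take a single vector $v$ with $\|\rho(\gamma_n)v\|\to\infty$ (which must exist by discreteness and finite kernel), note that the direction of $\rho(\gamma_n)v$ converges to a \emph{null} direction $\ell$ (since $\langle\rho(\gamma_n)v,\rho(\gamma_n)v\rangle_{p,q+1}$ is constant while the norm blows up), choose $\varepsilon$ so small that the segments $\sigma_n := [\widehat x_n-\varepsilon v,\widehat x_n+\varepsilon v]$ project into $\Omega$, and observe that $\rho(\gamma_n)\cdot\sigma_n\subset\Omega$ converges to a full projective line (spanned by the timelike limit $\widehat x$ of $\rho(\gamma_n)\widehat x_n$ and the null $\ell$). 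This contradicts proper convexity of $\Omega$ directly — no Cartan decomposition, no face of $\overline\Omega$ needed.

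\textbf{Gap in item \eqref{item:Kx}.} You write that for $\gamma\notin F$ and $y\in\mathcal D$, ``the projective line through $x$ and $\rho(\gamma)\cdot y$ then meets $\partial\HH^{p,q}$ transversally at two points,'' and later justify compactness of $\mathcal K_x$ by saying the accumulation directions are ``spacelike because \dots a line from $x$ to a transverse boundary point exits the quadric twice.'' This is circular: you need to \emph{prove} that the line from $x$ to an accumulation point $y\in\partial\HH^{p,q}$ is transverse (spacelike), not assume it. Being near $\partial\HH^{p,q}$ does not preclude the connecting segment from being lightlike, i.e.\ tangent to the quadric at $y$. The paper explicitly rules this out: if $[x,y)$ were lightlike, the tangent hyperplane to $\partial\HH^{p,q}$ at $y$ would contain $[x,y)\subset\Omega$; a small perturbation $[x',y)$ would still lie in the open set $\Omega$, yet $x'$ can be chosen so that this perturbed segment crosses $\partial\HH^{p,q}$ near $y$ (since $\partial\HH^{p,q}$ has $p-1$ positive and $q$ negative principal curvature directions, the tangent hyperplane is not a supporting hyperplane when $q\ge 1$), contradicting $\Omega\subset\HH^{p,q}$. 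This lightlike exclusion is the substantive content of item \eqref{item:Kx} and is absent from your sketch.

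Item \eqref{item:flight} is fine as a consequence of \eqref{item:accum} and \eqref{item:Kx}, as in the paper.
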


\begin{proof}
\eqref{item:accum} Suppose by contradiction that there are sequences $(x_n)_{n\in \NN}$ of $\mathcal{D}$ and $(\gamma_n)_{n\in \NN}$ of $\Gamma$ such that the $\gamma_n$ are pairwise distinct and for $y_n := \rho(\gamma_n)\cdot x_n$ the sequence $(y_n)_{n\in \NN}$ converges to some $y\in\HH^{p,q}$.
We can lift the $x_n\in\HH^{p,q}$ to vectors $\widehat{x}_n\in\widehat{\HH}^{p,q}\subset\RR^{p,q+1}$, \ie  $\langle \widehat{x}_n,\widehat{x}_n \rangle_{p,q+1}=-1$.
Both the $\widehat{x}_n$ and the $\rho(\gamma_n)\cdot \widehat{x}_n$ stay in a compact subset of~$\RR^{p,q+1}$ and $(\rho(\gamma_n)\cdot \widehat{x}_n)_{n\in \NN}$ converges to a unit timelike vector~$\widehat{x}$.
On the other hand, since $\rho$ has finite kernel and discrete image, there exists a vector $v \in \RR^{p,q+1}$ such that $(\rho(\gamma_n)\cdot v)_{n\in \NN}$ leaves every compact subset of~$\RR^{p,q+1}$.
(Indeed, at least one vector of any given basis of $\RR^{p,q+1}$ must satisfy this property.)
Up to passing to a subsequence, we may assume that the direction of $\rho(\gamma_n)\cdot v$ converges to some null direction~$\ell$.
There exists $\varepsilon > 0$ such that all segments $[\widehat{x}_n-\varepsilon v,\widehat{x}_n+\varepsilon v] \subset \RR^{p,q+1}\smallsetminus\{0\}$ project to segments $\sigma_n$ contained in~$\Omega$.
The images $\rho(\gamma_n)\cdot\sigma_n$, which are again contained in~$\Omega$, converge to the full projective line spanned by $\widehat{x}$ and~$\ell$.
This contradicts the proper convexity of~$\Omega$.
Thus the $\rho(\Gamma)$-orbit of~$\mathcal{D}$ does not have any accumulation point in~$\HH^{p,q}$.

\eqref{item:Kx} Let $y\in\partial\HH^{p,q}$ be an accumulation point of the orbit $\rho(\Gamma)\cdot\mathcal{D}$, and consider $x\in\mathcal{D}$.
Then $y$ cannot be seen from~$x$ in a timelike direction by~\eqref{item:accum}, since timelike geodesics do not meet $\partial\HH^{p,q}$.
It cannot be seen in a lightlike direction either: otherwise, the tangent plane to $\partial \HH^{p,q}$ at $y$ contains the interval $[x,y)\subset \Omega$, and any small perturbation $[x',y)$ still lies in~$\Omega$ --- but $x'$ can be chosen so that near $y$ this perturbation crosses $\partial \HH^{p,q}$, which would contradict $\Omega\subset \HH^{p,q}$.
Therefore $y\in\partial\HH^{p,q}$ is seen from~$x$ in a spacelike direction.
We conclude using the compactness of the accumulation set.

\eqref{item:flight} The third statement is a direct consequence of \eqref{item:accum} and~\eqref{item:Kx}.
\end{proof}

%%%%%%%%%%%%%%%%%%%%%%%%%
\subsection{Properness for affine actions on $\g=\oo(p,q+1)$} \label{subsec:proof-pseudo-Riem-contract-proper-g}

We now prove Theorem~\ref{thm:contract-proper}.\eqref{item:Hpq-contract-g}.
As in Sections \ref{subsec:o(p,q+1)} and~\ref{subsec:spacelike-contract-def}, we view $\g=\mathfrak{o}(p,q+1)$ as the space of Killing fields on~$\HH^{p,q}$.
As in Section~\ref{sec:metric-contraction}, we denote by $\mathcal{F}(\mathcal{O})$ the set of compact subsets of $\mathcal{O}\subset\Omega$, endowed with the Hausdorff topology for the restriction of the Hilbert metric $d_{\Omega}$ of Section~\ref{subsec:prop-conv}.
Note that in Definition~\ref{def:contract-spacelike}.\eqref{def:contract-Hpq-g}, up to restricting the set $\mathcal{O}\subset \Omega$ we may always choose it to be compact modulo $\rho(\Gamma)$.
Theorem~\ref{thm:contract-proper}.\eqref{item:Hpq-contract-g} therefore reduces to the following.

\begin{proposition} \label{prop:coarse-proj-g-pq}
Let $\Gamma$ be a discrete group, $\rho : \Gamma\to G=\OO(p,q+1)$ a representation with finite kernel and discrete image, preserving a properly convex open subset $\Omega$ of~$\HH^{p,q}$, and $u : \Gamma\to\g$ a $\rho$-cocycle. Suppose that 
$u$ is coarsely uniformly contracting in spacelike directions with respect to $(\rho, \Omega)$, with $\mathcal{O},Z,c<0$ and $c'$ as in Definition~\ref{def:contract-spacelike}.\eqref{def:contract-Hpq-g} and $\rho(\Gamma)\backslash \mathcal{O}$ compact.
Choose a continuous family of norms $(\Vert \cdot \Vert_x)_{x\in\mathcal{O}}$ on $T_x\HH^{p,q}$ which is $\rho$-invariant in the sense that $\Vert \rho(\gamma)_*v \Vert_{\rho(\gamma)\cdot x}= \Vert v \Vert_x$ for all $\gamma\in \Gamma$, all $x\in\mathcal{O}$, and all $v\in T_x\HH^{p,q}$. 
Then the map
\begin{eqnarray*}
\pi :\quad \g & \longrightarrow & \hspace{.45cm} \mathcal{F}(\mathcal{O})\\
Y & \longmapsto & \big\{ x\in \mathcal{O} ~|~ \Vert (Z-Y)(x)\Vert_{x}\ \mathrm{is}\ \mathrm{minimal}\big\}
\end{eqnarray*}
is well defined and takes any compact subset of~$\g$ to a compact subset of $\mathcal{F}(\mathcal{O})$.
Moreover, $\pi$ is equivariant with respect to the affine action of $\Gamma$ on~$\g$ via $(\rho,u)$ and the action of $\Gamma$ on $\mathcal{F}(\mathcal{O})$ via~$\rho$.
In particular, the affine action of $\Gamma$ on~$\g$ via $(\rho,u)$ is properly discontinuous.
\end{proposition}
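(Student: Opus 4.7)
The plan is to transplant the Hadamard argument of Proposition~\ref{prop:coarse-proj-g} to the pseudo-Riemannian setting by restricting all variation-of-distance estimates to spacelike pairs of points in~$\mathcal O$, and using Lemma~\ref{lem:spacelike-Gamma-orbit} as a substitute for completeness to ensure that, relative to a fixed basepoint, all but finitely many $\rho(\Gamma)$-orbit points lie in a \emph{uniformly} spacelike direction.

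First I would handle equivariance. Writing $V_Y := Z - Y$ for $Y \in \g$ (with $Y$ viewed as a Killing field on~$\HH^{p,q}$), the cocycle identity \eqref{eqn:cocycle} gives $\Ad(\rho(\gamma))^{-1}u(\gamma) = -u(\gamma^{-1})$, and combining this with \eqref{eqn:equivar} and \eqref{eqn:pushforward-Psi} yields
\[
V_Y(\rho(\gamma)\cdot x)\ =\ \rho(\gamma)_*\,V_{\gamma^{-1}\bullet Y}(x), \qquad \gamma^{-1}\bullet Y := \Ad(\rho(\gamma))^{-1}Y + u(\gamma^{-1}).
\]
The $\rho$-invariance of the norms then yields $\|V_Y(\rho(\gamma)\cdot x)\|_{\rho(\gamma)\cdot x} = \|V_{\gamma^{-1}\bullet Y}(x)\|_x$, and hence $\pi(\gamma\bullet Y) = \rho(\gamma)\cdot\pi(Y)$.

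The core step is to show that $x \mapsto \|V_Y(x)\|_x$ is a proper function on~$\mathcal O$ for each $Y \in \g$. Since Killing fields preserve $\mathsf g^{p,q}$, their contribution to the first variation of $d_{\HH^{p,q}}$ along spacelike geodesics is zero; hence $V_Y$ remains coarsely $c$-lipschitz in spacelike directions, and Proposition~\ref{prop:deriv-dHpq} rewrites this as
\[
\mathsf g^{p,q}_x\big(V_Y(x),V_x^{y_0}\big) + \mathsf g^{p,q}_{y_0}\big(V_Y(y_0),V_{y_0}^x\big)\ \geq\ |c|\,d_{\HH^{p,q}}(x,y_0) - c'
\]
for a chosen basepoint $y_0$ in a compact fundamental domain $\mathcal D \subset \mathcal O$ and any $x$ on a spacelike line through~$y_0$. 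I would then apply Lemma~\ref{lem:spacelike-Gamma-orbit}.\eqref{item:Kx} to both $\gamma$ and $\gamma^{-1}$, producing a finite set $F \subset \Gamma$ and a compact set of unit spacelike directions such that for every $\gamma \notin F$ and $y \in \mathcal D$, both $V_{y_0}^{\rho(\gamma)y}$ and $V_y^{\rho(\gamma^{-1})y_0} = \rho(\gamma)_*^{-1}V_{\rho(\gamma)y}^{y_0}$ lie in this compact set. The $\rho$-invariance of $\mathsf g^{p,q}$ and $\|\cdot\|$ then bounds the dual norms (w.r.t.\ $\|\cdot\|$) of the linear functionals $\mathsf g^{p,q}_x(\cdot,V_x^{y_0})$ and $\mathsf g^{p,q}_{y_0}(\cdot,V_{y_0}^x)$ by a uniform constant $C>0$, and the displayed inequality becomes
\[
\|V_Y(x)\|_x\ \geq\ \frac{|c|}{C}\,d_{\HH^{p,q}}(x,y_0) - C\,\|V_Y(y_0)\|_{y_0} - \frac{c'}{C}
\]
for all $x$ outside a finite $\rho(\Gamma)$-saturation of~$\mathcal D$. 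Since $d_{\HH^{p,q}}(x,y_0) \to \infty$ as $x$ escapes compact sets (Lemma~\ref{lem:spacelike-Gamma-orbit}.\eqref{item:flight}), $\|V_Y\|$ is proper on~$\mathcal O$, so $\pi(Y)$ is a well-defined compact subset of~$\mathcal O$. The same inequality, read uniformly in $Y$ over any compact $K \subset \g$, shows that $\pi(K)$ consists of compact subsets contained in a fixed Hilbert ball about~$\mathcal D$, so $\pi$ takes compact sets to compact sets. Since $\rho(\Gamma) \subset \mathrm{Aut}(\Omega)$ acts properly discontinuously on $\mathcal O \subset \Omega$ and hence on $\mathcal F(\mathcal O)$, equivariance of $\pi$ transfers proper discontinuity back to the affine action on~$\g$.

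I expect the only real obstacle to be the Cauchy--Schwarz-type bound $|\mathsf g^{p,q}_x(v,W)| \leq C\,\|v\|_x$: because $\mathsf g^{p,q}$ is indefinite, this cannot hold for arbitrary unit spacelike $W$, and can only be arranged uniformly after Lemma~\ref{lem:spacelike-Gamma-orbit} has confined $W = V_x^{y_0}$ to a compact cone of directions. This spacelike-escape property is precisely what makes the pseudo-Riemannian argument go through and what distinguishes it from its Hadamard model.
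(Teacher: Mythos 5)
Your proposal is correct and follows essentially the same route as the paper's proof: choose a compact fundamental domain, apply Proposition~\ref{prop:deriv-dHpq} together with the observation that adding a Killing field to $Z$ preserves the coarse $c$-lipschitz property in spacelike directions, and then invoke Lemma~\ref{lem:spacelike-Gamma-orbit}.\eqref{item:Kx}--\eqref{item:flight} to obtain a uniform Cauchy--Schwarz-type bound $|\mathsf g^{p,q}_x(\cdot,V_x^y)| \leq R\,\Vert\cdot\Vert_x$ along the compact cone of spacelike escape directions, which makes $x \mapsto \Vert (Z-Y)(x)\Vert_x$ proper on $\mathcal O$. You correctly isolate the key pseudo-Riemannian obstruction (indefiniteness of $\mathsf g^{p,q}$) and its resolution by the spacelike-escape lemma; the paper phrases the direction bound via the pushed-forward family $\mathcal K_y := \rho(\gamma)_*\mathcal K_{\rho(\gamma)^{-1}\cdot y}$ rather than your double application of Lemma~\ref{lem:spacelike-Gamma-orbit}.\eqref{item:Kx} to $\gamma$ and $\gamma^{-1}$, but these are the same argument. (Only a cosmetic slip in your last displayed inequality: the coefficient on $\Vert V_Y(y_0)\Vert_{y_0}$ should be $1$, not $C$, given the normalization used; this does not affect the conclusion.)
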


The proof follows closely that of Proposition~\ref{prop:coarse-proj-g}.

\begin{proof}
Let $\mathcal{D} \subset \HH^{p,q}$ be a compact fundamental domain for the action of $\Gamma$ on $\mathcal{O}$ via~$\rho$.
By Lemma~\ref{lem:spacelike-Gamma-orbit}.\eqref{item:Kx}, there is a compact set $\mathcal{D}'\subset \mathcal{O}$ such that any $x\in \mathcal{D}$ sees any point of $\mathcal{O}\smallsetminus \mathcal{D}'$ in a spacelike direction belonging to $\mathcal{K}_x \subset T_{x}^{+1}\HH^{p,q}$, for some compact set $\mathcal{K}_x$ staying away from the null directions.
By compactness, there exists $R>0$ such that
$$|\mathsf{g}^{p,q}_x(w,v)| \leq R \,\Vert w \Vert_x$$
for all $w\in T_x\HH^{p,q}$ and $v\in\mathcal{K}_x$ with $x\in\mathcal{D}$, where $\mathsf{g}^{p,q}$ is the pseudo-Riemannian metric of $\HH^{p,q}$ from Section~\ref{subsec:prelim-Hpq}.
Consider $y\in\mathcal{O} \smallsetminus \mathcal{D}'$: it belongs to $\rho(\gamma)\cdot \mathcal{D}$ for some $\gamma\in\Gamma$.
By equivariance, for any $x\in\mathcal{D}$, the point $y$ sees~$x$ in a spacelike direction in $\mathcal{K}_{y} := \rho(\gamma)_*\mathcal{K}_{\rho(\gamma)^{-1}\cdot y}$, and $|\mathsf{g}^{p,q}_{y}(w,v)| \leq R \,\Vert w \Vert_{y}$ for all $w\in T_{y}\HH^{p,q}$ and $v\in\mathcal{K}_{y}$.
Applying Proposition~\ref{prop:deriv-dHpq} to the pair $(x, y)$, we obtain that for any 
vector field $V$ defined at both $x$ and~$y$,
$$ \frac{\D}{\D t}\Big|_{t=0} \, d_{\HH^{p,q}} \big( \exp_x(tV(x)),\;\exp_{y}(tV(y))\big) \geq - R\,\Vert V(x)\Vert_x - R\,\Vert V(y)\Vert_y \, ;
$$
in particular, if $V$ is coarsely $\kap$-lipschitz in spacelike directions (Definition~\ref{def:contract-spacelike}.\eqref{def:contract-Hpq-g}), then
\begin{equation} \label{eqn:ineq-c-lip-spacelike}
\kap\, d_{\HH^{p,q}}(x,y) + \kap' \geq - R\,\Vert V(x)\Vert_x - R\,\Vert V(y)\Vert_y .
\end{equation}
A vector field is coarsely $\kap$-lipschitz in spacelike directions if and only if its sum with any Killing field is.
Therefore, for any $Y\in \g$, by applying \eqref{eqn:ineq-c-lip-spacelike} to $V=Z-Y$, we find
$$R\,\Vert (Z-Y)(y)\Vert_y \geq |\kap| \, d_{\HH^{p,q}}(x,y) - \big(\kap'+R\,\Vert (Z-Y)(x)\Vert_x\big).$$
The term $c'+R\,\Vert (Z-Y)(x)\Vert_x$ is independent of~$y$ and remains bounded as $Y$ varies in a compact set, while the term $|\kap|\,d_{\HH^{p,q}}(x,y)$ is independent of~$Y$ and goes to $+\infty$ as $y$ goes to infinity in~$\mathcal{O}$, by Lemma~\ref{lem:spacelike-Gamma-orbit}.\eqref{item:flight}.
Since $Z$ is continuous, this shows that $\pi$ is well defined and takes compact sets to compact sets.

The equivariance of~$\pi$ follows from that of~$Z$: for any $\gamma\in\Gamma$ and $x\in\mathcal{O}$,
\begin{align*}
 & \Vert \big(Z - (\mathrm{Ad}(\rho(\gamma)) Y + u(\gamma))\big)(\rho(\gamma)\cdot x) \Vert_{\rho(\gamma)\cdot x} \\
 = & \left \Vert \rho(\gamma)_* (Z(x)) + u(\gamma)(\rho(\gamma)\cdot x) - (\mathrm{Ad}(\rho(\gamma)) Y +u(\gamma))(\rho(\gamma)\cdot x) \right \Vert_{\rho(\gamma)\cdot x} \\
 = & \Vert \rho(\gamma)_*((Z-Y)(x)) \Vert_{\rho(\gamma)\cdot x} = \Vert (Z-Y)(x) \Vert_{x}.
\end{align*}
By equivariance of~$\pi$, since the action of $\Gamma$ on $\mathcal{F}(\mathcal{O})$ via $\rho$ is properly discontinuous, so is the affine action of $\Gamma$ on~$\g$ via $(\rho,u)$.
\end{proof}

%%%%%%%%%%%%%%%%%%%%%%%%%
\subsection{Properness for actions on $G=\OO(p,q+1)$ by right-and-left multiplication} \label{subsec:proof-pseudo-Riem-contract-proper-G}

Theorem~\ref{thm:contract-proper}.\eqref{item:Hpq-contract-G} is an immediate consequence of Proposition~\ref{prop:coarse-proj-G} and of the following.

\begin{proposition} \label{prop:HtoX-contraction}
Let $\Gamma$ be a discrete group and $\rho : \Gamma\to G=\OO(p,q+1)$ a representation with finite kernel and discrete image, preserving a properly convex open subset $\Omega \neq \varnothing$ of $\HH^{p,q}$. 
Let $\rho':\Gamma\to G$ be a strongly irreducible representation
such that $\rho'(\Gamma)$ contains a proximal element.
If $\rho'$ is coarsely uniformly contracting in spacelike directions with respect to~$(\rho, \Omega)$ (Definition~\ref{def:contract-spacelike}.\eqref{def:contract-Hpq-G}), then $\rho'$ is coarsely uniformly contracting with respect to~$(\rho, \mathbb{X})$ (Definition~\ref{def:contract-equivar-deform-G}), where $\mathbb{X}:=G/(\OO(p)\times \OO(q+1))$ is the Riemannian symmetric space of~$G$ endowed with the $G$-invariant metric $d_{\mathbb{X}}$ of~\eqref{eqn:finsler}.
\end{proposition}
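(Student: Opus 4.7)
The plan is to extract from the spacelike contraction of $f$ a coarse upper bound
\begin{equation} \label{eqn:mu1-bound}
\mu_1(\rho'(\gamma)) \leq C\, \mu_1(\rho(\gamma)) + C'' \qquad \text{for all } \gamma \in \Gamma,
\end{equation}
with the same $C<1$ as in Definition~\ref{def:contract-spacelike}.\eqref{def:contract-Hpq-G} and some $C'' \geq 0$, and then to convert~\eqref{eqn:mu1-bound} into coarse uniform contraction of $\rho'$ with respect to $(\rho, \mathbb{X})$ via a $(\rho,\rho')$-equivariant orbit map in~$\mathbb{X}$.

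\emph{Step 1: establishing~\eqref{eqn:mu1-bound}.}
Fix $x_0 \in \mathcal{O}$ and set $y_0 := f(x_0) \in \HH^{p,q}$. Since $\rho'$ is strongly irreducible and $\rho'(\Gamma)$ contains a proximal element, Fact~\ref{fact:AMS}, combined with the density-type argument underlying it in \cite[\S\,4.5]{ben97}, provides a finite set $F \subset \Gamma$ and a constant $C_{\rho'} \geq 0$ with the following property: for every $\gamma \in \Gamma$, some $f_\gamma \in F$ makes $g_\gamma := \rho'(\gamma f_\gamma)$ proximal in $\PP(\RR^{p,q+1})$, satisfies $\lambda_1(g_\gamma) \geq \mu_1(\rho'(\gamma)) - C_{\rho'}$, and admits attracting and repelling fixed points $\xi_{g_\gamma}^\pm \in \partial\HH^{p,q}$ contained in a prescribed compact set disjoint from $y_0^\perp$ (the strong irreducibility of $\rho'$ is what guarantees that such a compact set is nonempty and that this refinement of Fact~\ref{fact:AMS} applies). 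Lemma~\ref{lem:orbit-growth}.\eqref{item:growth-prox} then gives $\frac{1}{n}\, d_{\HH^{p,q}}(y_0, g_\gamma^n\cdot y_0) \to \lambda_1(g_\gamma)$ as $n\to\infty$. By $(\rho, \rho')$-equivariance of $f$, one has $g_\gamma^n \cdot y_0 = f(\rho(\gamma f_\gamma)^n \cdot x_0)$; by Lemma~\ref{lem:spacelike-Gamma-orbit}.\eqref{item:flight}, the pair $(x_0, \rho(\gamma f_\gamma)^n \cdot x_0)$ lies on a spacelike line of $\Omega$ for all but finitely many~$n$. The coarse spacelike $C$-Lipschitz property of $f$ therefore yields
\[
d_{\HH^{p,q}}(y_0, g_\gamma^n \cdot y_0) \leq C\, d_{\HH^{p,q}}(x_0, \rho(\gamma f_\gamma)^n \cdot x_0) + C'
\]
for $n$ large. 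Dividing by~$n$ and passing to $\limsup$, Lemma~\ref{lem:orbit-growth}.\eqref{item:growth-general} applied to $\rho(\gamma f_\gamma)$ gives $\lambda_1(g_\gamma) \leq C\,\lambda_1(\rho(\gamma f_\gamma)) \leq C\,\mu_1(\rho(\gamma f_\gamma))$; combining with the subadditivity $\mu_1(\rho(\gamma f_\gamma)) \leq \mu_1(\rho(\gamma)) + \max_{f \in F}\mu_1(\rho(f))$ proves~\eqref{eqn:mu1-bound}.

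\emph{Step 2: from~\eqref{eqn:mu1-bound} to coarse contraction on~$\mathbb{X}$.}
The finite subgroup $\rho'(\ker \rho) \subset G$ acts on the Hadamard manifold $\mathbb{X}$ by isometries, so by the Bruhat--Tits fixed-point theorem it admits a nonempty, closed, totally geodesic common fixed locus in~$\mathbb{X}$; inside this locus, a Baire-category argument lets us choose a basepoint $x_\mathbb{X}$ whose stabilizer in $\rho(\Gamma)$ is precisely the image of $\ker \rho$, namely trivial. Set $\mathcal{O}_\mathbb{X} := \rho(\Gamma)\cdot x_\mathbb{X}$, a discrete closed $\rho(\Gamma)$-invariant subset of~$\mathbb{X}$, and define $F : \mathcal{O}_\mathbb{X} \to \mathbb{X}$ by $F(\rho(\gamma)\cdot x_\mathbb{X}) := \rho'(\gamma)\cdot x_\mathbb{X}$. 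Well-definedness follows from $\rho'(\ker\rho)\cdot x_\mathbb{X} = \{x_\mathbb{X}\}$, and $(\rho,\rho')$-equivariance is immediate. The $G$-invariance of $d_\mathbb{X}$ gives
\[
d_\mathbb{X}\bigl(\rho(\gamma_1) x_\mathbb{X}, \rho(\gamma_2) x_\mathbb{X}\bigr) = \mu_1(\rho(\gamma_1^{-1}\gamma_2)), \quad d_\mathbb{X}\bigl(F(\rho(\gamma_1) x_\mathbb{X}), F(\rho(\gamma_2) x_\mathbb{X})\bigr) = \mu_1(\rho'(\gamma_1^{-1}\gamma_2)),
\]
so~\eqref{eqn:mu1-bound} immediately yields the coarse $C$-Lipschitz property of~$F$ with $C<1$, establishing that $\rho'$ is coarsely uniformly contracting with respect to $(\rho, \mathbb{X})$.

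\emph{Main obstacle.}
The crux is Step~1, where one must be able to prescribe the attracting and repelling fixed points $\xi^\pm_{g_\gamma}$ to avoid $y_0^\perp$ \emph{uniformly in~$\gamma$}, via a single finite set~$F$. This is where the strong irreducibility of $\rho'$ enters essentially, through Benoist's density-type argument in \cite[\S\,4.5]{ben97}: without this uniform control, Lemma~\ref{lem:orbit-growth}.\eqref{item:growth-prox} would supply only the weaker inequality $\limsup_n \frac{1}{n}\,d_{\HH^{p,q}}(y_0, g_\gamma^n y_0) < \lambda_1(g_\gamma)$ for infinitely many~$\gamma$, breaking the chain of inequalities needed to deduce~\eqref{eqn:mu1-bound}.
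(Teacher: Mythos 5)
Your overall plan matches the paper's: derive a coarse inequality $\mu_1(\rho'(\gamma)) \le C\,\mu_1(\rho(\gamma)) + C''$ from spacelike contraction of~$f$, then convert it into coarse contraction on~$\mathbb{X}$ via the equivariant orbit map and the Finsler metric~$d_\mathbb{X}$, and the chain of inequalities at the end of Step~1 (via Fact~\ref{fact:AMS}, $\lambda_1\le\mu_1$, submultiplicativity) is exactly the paper's. The gap is in the middle of Step~1. To apply Lemma~\ref{lem:orbit-growth}.\eqref{item:growth-prox} at the single fixed point $y_0=f(x_0)$, you need $y_0\notin(\xi^+_{g_\gamma})^\perp\cup(\xi^-_{g_\gamma})^\perp$ uniformly over~$\gamma$, and you propose to secure this by upgrading Fact~\ref{fact:AMS} so that the attracting and repelling fixed points of $g_\gamma=\rho'(\gamma f_\gamma)$ stay in a compact set disjoint from~$y_0^\perp$. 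But Fact~\ref{fact:AMS} as stated gives no control whatsoever on where the fixed points lie, and the ``density-type argument'' you invoke is never spelled out; as written, Step~1 asserts the key estimate rather than proving it. The paper avoids needing any such strengthening by letting the test point $x\in\mathcal{O}$ depend on~$\gamma$: for each $\gamma$ with $\rho'(\gamma)$ proximal, a short Zariski-closure argument (using strong irreducibility of~$\rho'$) shows that $f(\mathcal{O})$ cannot lie entirely inside $(\xi^+)^\perp\cup(\xi^-)^\perp$ — otherwise the Zariski closure of $f(\mathcal O)$ would give a $\rho'(\Gamma)$-invariant finite union of proper subspaces — so some $x$ works; and since the additive constant $C'$ in the coarse Lipschitz bound is uniform over $\mathcal{O}$ and drops out in the $\limsup$, the $\gamma$-dependence of~$x$ is harmless. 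That one idea (vary $x$ with $\gamma$, don't fix $y_0$) is what your argument is missing.

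Your Step~2 is also more elaborate than needed, and the Baire argument has a flaw: the Bruhat--Tits fixed locus of $\rho'(\ker\rho)$ is a proper totally geodesic submanifold, and the intersections with the fixed sets of nontrivial $\rho(\gamma)$ need not be meager \emph{inside that submanifold} — one of them could coincide with it — so no basepoint with trivial $\rho(\Gamma)$-stabilizer need exist there. The paper simply takes $x_0=[e]$; any ambiguity in the orbit map $f_\mathbb{X}$ coming from the finite group $\rho(\Gamma)\cap(\OO(p)\times\OO(q+1))$ (and the finite kernel of~$\rho$) changes $f_\mathbb{X}$ by a uniformly bounded amount, which the coarse constant $C'$ absorbs.
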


See Section~\ref{subsec:remind-lambda1} for the notions of proximality and strong irreducibility.

\begin{proof}
By Definition~\ref{def:contract-spacelike}.\eqref{def:contract-Hpq-G} of spacelike coarse uniform contraction, there exist a $\rho(\Gamma)$-invariant subset $\mathcal{O}\neq\varnothing$ of~$\Omega$ and a $(\rho,\rho')$-equivariant map $f : \mathcal{O}\to\HH^{p,q}$ that is coarsely $\Kap$-Lipschitz in spacelike directions, for some $\Kap<1$.

Consider the orbit $\mathcal{O}_{\mathbb{X}}:=\rho(\Gamma)\cdot x_0 \subset \mathbb{X}$ of the basepoint $x_0=[e]\in\mathbb{X}$.
We only need to find $\Kap'\in \mathbb{R}$ such that the map $f_{\mathbb{X}} : \mathcal{O}_{\mathbb{X}} \rightarrow \mathbb{X}$ taking every $\rho(\gamma)\cdot x_0$ to $\rho'(\gamma)\cdot x_0$ satisfies
$$d_{\mathbb{X}}\big(f_{\mathbb{X}}(\rho(\gamma_1)\cdot x_0),f_{\mathbb{X}}(\rho(\gamma_2)\cdot x_0)\big) \leq \Kap \, d_{\mathbb{X}}(\rho(\gamma_1)\cdot x_0,\rho(\gamma_2)\cdot x_0) + \Kap'$$
for all $\gamma_1, \gamma_2\in \Gamma$.
Since $f_{\mathbb{X}}$ is $(\rho, \rho')$-equivariant, we can restrict our attention to $\gamma_2=e$.
By the definitions \eqref{eqn:finsler} of $d_{\mathbb{X}}$ and~\eqref{eqn:muis} of $\mu_1$, it is therefore enough to show that for any $\gamma\in \Gamma$,
\begin{equation} \label{eqn:contract-macro-mu}
\mu_1(\rho'(\gamma)) \leq \Kap\,\mu_1(\rho(\gamma)) + \Kap' .
\end{equation}

We first check that for any $\gamma\in\Gamma$ with $\rho'(\gamma)$ proximal,
\begin{equation} \label{eqn:contract-macro-lambda}
\lambda_1(\rho'(\gamma)) \leq \Kap\,\lambda_1(\rho(\gamma)).
\end{equation}
For $\gamma\in\Gamma$ with $\rho'(\gamma)$ proximal, we have $\lambda_1(\rho'(\gamma))>0$.
Let $H^{\pm}\subset\RR^{p,q+1}$ be the sum of the generalized eigenspaces of $\rho'(\gamma)$ for eigenvalues of modulus $\neq \mathrm{e}^{\mp\lambda_1(\rho'(\gamma))}$.
Suppose by contradiction that $f(\mathcal{O})\subset H^+\cup H^-$.
Since $f(\mathcal{O})$ is $\rho'(\Gamma)$-invariant, so is the Zariski closure $\mathcal{Z}$ of $f(\mathcal{O})$.
Any irreducible component $\mathcal{Z}_i$ of $\mathcal{Z}$ is contained either in $H^+$ or in $H^-$, hence spans a proper subspace of $\RR^{p,q+1}$.
The union of these subspaces is preserved by $\rho'(\Gamma)$, contradicting strong irreducibility.
Therefore there exists $x\in \mathcal{O}$ such that $f(x) \notin H^+\cup H^-$, and Lemma~\ref{lem:orbit-growth}.\eqref{item:growth-prox} gives 
$$\lim_{n\to +\infty} \frac{1}{n} \, d_{\HH^{p,q}}(f(x), \rho'(\gamma)^n\cdot f(x)) = \lambda_1(\rho'(\gamma)).$$ 
On the other hand, by Lemma~\ref{lem:spacelike-Gamma-orbit}, for any large enough $n\in\NN$ the points $x$ and $\rho(\gamma^n)\cdot x$ are on a spacelike line, hence
$$d_{\HH^{p,q}}(f(x), \rho'(\gamma)^n\cdot f(x)) \leq \Kap \, d_{\HH^{p,q}}(x,\rho(\gamma)^n\cdot x) + C''$$
by assumption on~$f$, for some $C''\in\RR$ independent of~$n$.
Using Lemma~\ref{lem:orbit-growth}.\eqref{item:growth-general}, we obtain $\lambda_1(\rho'(\gamma)) \leq \Kap\,\lambda_1(\rho(\gamma))$, \ie \eqref{eqn:contract-macro-lambda} holds.

Let us now find $\Kap'\in \RR$ such that \eqref{eqn:contract-macro-mu} holds for all $\gamma\in\Gamma$.
Let $F \subset \Gamma$ and~$C_{\rho'} \geq 0$ be given by Fact~\ref{fact:AMS}, and let
$$\Kap' := C_{\rho'} + \Kap\max_{f\in F} \mu_1(\rho(f)) \in \RR.$$
For any $\gamma\in\Gamma$, we can find $f\in F$ such that $\rho'(\gamma f)$ is proximal and $\mu_1(\rho'(\gamma))  \leq \lambda_1(\rho'(\gamma f))+C_{\rho'}$.
By \eqref{eqn:contract-macro-lambda}, we have $\lambda_1(\rho'(\gamma f)) \leq \Kap\,\lambda_1(\rho(\gamma f))$.
For any $g\in G$ we have $\mu_1(g) = \log\,\Vert g\Vert$, hence $\mu_1(g) \geq \lambda_1(g)$ and $\mu_1(gg') \leq \mu_1(g) + \mu_1(g')$ for all $g,g'\in G$.
We deduce
$$\mu_1(\rho'(\gamma)) \leq \Kap\,\lambda_1(\rho(\gamma f)) + C_{\rho'} \leq \Kap\,\mu_1(\rho(\gamma f)) + C_{\rho'} \leq \Kap\,\mu_1(\rho(\gamma)) + \Kap'.\qedhere$$
\end{proof}

\begin{remark}
At the level of proofs, the parallel between $\g$ and $G$ broke down to some extent between Sections~\ref{subsec:proof-pseudo-Riem-contract-proper-g} and~\ref{subsec:proof-pseudo-Riem-contract-proper-G}.
In Section~\ref{subsec:proof-pseudo-Riem-contract-proper-g}, we were not able to use the spacelike-contracting vector fields on $\mathcal{O} \subset \Omega \subset \HH^{p,q}$ to produce contracting vector fields on a Hadamard (or even Finsler) manifold $\mathbb{X}$, to which we might have applied Proposition~\ref{prop:coarse-proj-g}; but we could mimic Proposition~\ref{prop:coarse-proj-g} by building an equivariant projection to $\mathcal{F}(\mathcal{O})$ using the pseudo-distance $d_{\HH^{p,q}}$ on~$\HH^{p,q}$. 
In Section~\ref{subsec:proof-pseudo-Riem-contract-proper-G}, starting from spacelike contracting maps from $\mathcal{O} \subset \Omega \subset \HH^{p,q}$ to $\HH^{p,q}$, we were not able to mimic Proposition~\ref{prop:coarse-proj-G} and build a well-behaved projection to $\mathcal{F}(\mathcal{O})$; but we could produce contracting maps in the symmetric space $\mathbb{X}=G/(\OO(p)\times \OO(q+1))$, endowed with an appropriate $G$-invariant Finsler metric, and apply Proposition~\ref{prop:coarse-proj-G} directly.
It is unclear to us whether or how the two arguments could be unified.
\end{remark}

%%%%%%%%%%%%%%%%%%%%%%%%%%%%%%%%%%%%%%%%%%%%%%%%%%%
\section{Uniform spacelike contraction for right-angled Coxeter groups} \label{sec:space-contr-Coxeter}

In this section we prove Theorems \ref{thm:proper-action-g} and~\ref{thm:proper-action-G} using the sufficient conditions for properness provided by Theorem~\ref{thm:contract-proper}.

Here is an outline of the argument: for a right-angled Coxeter group $\Gamma$ on $k$ generators,
we consider a certain natural one-parameter family $(\rho_t)_{t\in (-\infty,-1]}$ of deformations of the Tits canonical representation of $\Gamma$ into $\GL(k,\RR)$.
Vinberg's theory \cite{vin71} gives a natural properly convex domain $\mathcal{U}_t$ of $\PP(\RR^k)$ on which $\Gamma$ acts properly discontinuously via~$\rho_t$.
We truncate $\mathcal{U}_t$ to get a smaller properly convex $\rho_t(\Gamma)$-invariant domain $\Omega_t$ of $\PP(\RR^k)$ that lives in a copy $\Hpqv_t$ of $\HH^{p,q}$ for some $p,q\in\NN$ with $p+q+1=k$.
We may assume that the signature $(p,q)$ stays constant for $t$ in a certain open interval in $(-\infty,-1)$.
Up to conjugating everything to the standard copy of $\HH^{p,q}$, we may therefore meaningfully ask if certain equivariant maps between these domains $\Omega_t$ are uniformly contracting in spacelike directions: we prove that this is indeed the case (Proposition~\ref{prop:goal-final-section}) for some explicit piecewise projective maps, and we also show a vector-field counterpart.
This allows us to apply Theorem~\ref{thm:contract-proper} to prove Theorems \ref{thm:proper-action-g} (hence~\ref{thm:main}) and~\ref{thm:proper-action-G}.

%%%%%%%%%%%%%%%%%%%%%%%%%
\subsection{Basic setting}

We fix a right-angled Coxeter group
\begin{equation}\label{eqn:coxeter}
 \Gamma = \Gamma_S = \langle \gamma_1,\dots,\gamma_k ~|~ (\gamma_i \gamma_j)^{m_{i,j}} = 1\quad \forall i,j\rangle,
\end{equation}
where $m_{i,i}=1$ and $m_{i,j} = m_{j,i} \in\{ 2,\infty\}$ for all $i\neq j$.
Any subset $S'$ of the generating set $S = \{\gamma_1,\dots,\gamma_k\}$ defines a subgroup $\Gamma_{S'}$ of~$\Gamma$, with a presentation obtained from~\eqref{eqn:coxeter} by restricting to $i,j$ such that $\gamma_i, \gamma_j \in S'$.
We assume $\Gamma$ to be \emph{irreducible}, which means that $S$ cannot be written as a nontrivial disjoint union $S = S'\sqcup S''$ such that $\Gamma_{S'}$ and $\Gamma_{S''}$ commute.

If the number $k$ of generators is~$1$, then $\Gamma\simeq\ZZ/2\ZZ$ and Theorems \ref{thm:proper-action-g} and~\ref{thm:proper-action-G} are trivial.
If $k=2$, then $\Gamma$ is an infinite dihedral group; it admits properly discontinuous actions on the line~$\HH^1$, to which we can apply Proposition~\ref{prop:color} with $(m,p)=(0,1)$ and conclude using Propositions \ref{prop:coarse-proj-G} and~\ref{prop:coarse-proj-g} just as in Section~\ref{sec:riem-ex}.

From now on, we will assume $k\geq 3$.
In particular, $\Gamma$ is infinite.

%%%%%%%%%%%%%%%%%%%%%%%%%
\subsection{The canonical representation and its deformations} \label{subsec:remind-Coxeter}

The matrix $M_{-1} := (-\cos(\pi/m_{i,j}))_{1\leq i,j\leq k}$, with the convention $\pi/\infty=0$, is called the \emph{Gram matrix} of~$\Gamma$.
It defines a (possibly degenerate) symmetric bilinear form $\langle\cdot,\cdot\rangle_{-1}$ on~$\RR^k$.
Let $(e_1,\dots,e_k)$ be the standard basis of~$\RR^k$.
The \emph{canonical} (or \emph{geometric}) representation $\rho_{-1} : \Gamma\to\mathrm{Aut}(\RR^k,\langle\cdot,\cdot\rangle_t)\subset\GL(k,\RR)$, studied by Tits and others, is given by
$$\rho_{-1}(\gamma_i) ~:~ v \longmapsto v- 2\langle v, e_i \rangle_{-1} \: e_i, \quad\quad 1\leq i\leq k.$$

Note that $M_{-1} := \mathrm{Id}_k - N$, where $N = (N_{i,j})_{1\leq i , j \leq k}$ satisfies $N_{i,j}=1$ if $m_{i,j}=\infty$, and $0$ otherwise.
This matrix $N$ is irreducible with nonnegative entries. 
By the Perron--Frobenius theorem, there is a unique (up to scale) eigenvector $v_\PF$ of~$N$ with positive coordinates, corresponding to the highest eigenvalue $\lambda_{\PF}>0$.
In fact $\lambda_\PF \geq\sqrt{2}$ since, by irreducibility, $N$ contains a principal submatrix $\Big( \begin{smallmatrix} 0 & 1 & 0 \\ 1 & 0 & 1 \\ 0 & 1 & 0 \end{smallmatrix} \Big)$ or $\Big( \begin{smallmatrix} 0 & 1 & 1 \\ 1 & 0 & 1 \\ 1 & 1 & 0 \end{smallmatrix} \Big)$. 

One way to deform the canonical representation is to consider, for any $t\in (-\infty, -1]$, the matrix $M_t := \mathrm{Id}_{\RR^k} + tN$, \ie $M_t = ((M_t)_{i,j})_{1\leq i,j\leq k}$ with
$$(M_t)_{i,j} = \left\{ \begin{array}{ll}
1 & \text{if }m_{i,j}=1, \text{ \ie $i=j$},\\
0 & \text{if }m_{i,j}=2,\\
t\leq -1 & \text{if }m_{i,j}=\infty.
\end{array}\right.$$
This matrix $M_t$ still defines a symmetric bilinear form $\langle\cdot,\cdot\rangle_t$ on~$\RR^k$, and one can define a representation $\rho_t : \Gamma\to\mathrm{Aut}(\RR^k,\langle\cdot,\cdot\rangle_t)$ by
$$\rho_t(\gamma_i) : v \longmapsto v - 2\langle v, e_i \rangle_t \, e_i , \quad\quad 1\leq i\leq k.$$
Similar deformations were studied \eg in \cite{kra94}.

Note that $\mathrm{det}(M_t)$ is a polynomial in~$t$ which is not identically zero (consider $t=0$), hence it is nonzero outside some finite set $E$ of exceptional values of~$t$.
For any $t \in (-\infty, -1] \smallsetminus E$, the form $\langle\cdot,\cdot\rangle_t$ is nondegenerate.

The general theory of reflection groups developed by Vinberg applies to the representations~$\rho_t$.
For any $t \in (-\infty,-1] \smallsetminus E$, the convex cone
$$\widetilde{\Delta}_t = \{ v\in\RR^k ~|~ \langle v, e_i\rangle_t \leq 0 \ \,\forall i\}$$
has nonempty interior $\mathrm{Int}(\widetilde{\Delta}_t)$.
Indeed, $v_\PF\in\mathrm{Int}(\widetilde{\Delta}_t)$: for any~$i$ we have $\langle v_\PF,e_i\rangle_t<0$ since it is the $i$-th coordinate of $M_t (v_\PF)=(1+t\lambda_\PF)v_\PF$ and $t\lambda_\PF\leq -\sqrt{2}$.
Each generator $\gamma_i$ of~$\Gamma$ acts via~$\rho_t$ by reflection in the hyperplane $\mathrm{Ker}(\langle\cdot,e_i\rangle_t)$, called the $i$-th \emph{wall} of~$\widetilde{\Delta}_t$.
By \cite[Th.\,2 \&~5]{vin71}, the representation $\rho_t$ is faithful and discrete, and the open cone
$$\widetilde{\mathcal{U}}_t := \mathrm{Int} \big( \rho_t(\Gamma)\cdot\widetilde{\Delta}_t \big)$$
is convex. The action of $\Gamma$ on $\widetilde{\mathcal{U}}_t$ via~$\rho_t$ is properly discontinuous, with fundamental domain $\widetilde{\Delta}_t \cap \widetilde{\mathcal{U}}_t$.
The image $\mathcal U_t$ of $\widetilde{\mathcal U}_t$ in the projective space $\PP(\RR^k)$ is an open convex subset of $\PP(\RR^k)$, and the action of $\rho_t$ on $\mathcal U_t$ is properly discontinuous with fundamental domain $\Delta_t \cap \mathcal{U}_t$, where $\Delta_t$ is the image of $\widetilde \Delta_t$ in $\PP(\RR^k)$.
We shall call $\mathcal{U}_t$ the \emph{Tits--Vinberg domain} of $\rho_t(\Gamma)$.
By \cite[Prop.\,19]{vin71}, the representation $\rho_t$ is irreducible, hence $\mathcal{U}_t$ is properly convex, \ie its closure contains no projective line. 
In fact the following holds (see also \cite[Th.\,2.18]{mar17} or \cite{dgk-racg-cc}).

\begin{proposition} \label{prop:strong-irred}
For any $t\in (-\infty, -1]\smallsetminus E$, the representation $\rho_t : \Gamma \to \GL(k,\RR)$ is strongly irreducible.
\end{proposition}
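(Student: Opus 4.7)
My plan is to argue by contradiction, using that $\rho_t$ is irreducible (by \cite[Prop.\,19]{vin71}) and faithful (by \cite[Th.\,2]{vin71}), and that $\Gamma$ is infinite (because $\Gamma$ is irreducible with $k\geq 3$, forcing some $m_{i,j}=\infty$). So suppose $\rho_t(\Gamma)$ preserves a finite union $W=V_1\cup\cdots\cup V_m$ of nonzero proper subspaces of $V:=\RR^k$, and choose such $W$ with $m$ minimal; then $\rho_t(\Gamma)$ acts transitively on $\{V_1,\dots,V_m\}$, and irreducibility applied to $\sum_j V_j$ yields $\sum_j V_j=V$, so $m\geq 2$. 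My goal is to show this forces $\rho_t(\Gamma)$ to be finite, contradicting faithfulness.

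Two structural facts must be established. \emph{First, each $V_j$ is a line.} Each generator $\gamma_i$ acts on $\{V_j\}$ as an involution; a size-$2$ orbit $\{V_a,V_b\}$ contributes to the $(-1)$-eigenspace of $\rho_t(\gamma_i)$ a subspace of dimension equal to the codimension of $V_a\cap V_b$ in $V_a$ (via the antisymmetric map $v\mapsto v-\rho_t(\gamma_i)v$ on $V_a$), which is at most $1$ since the global $(-1)$-eigenspace is the line $\RR e_i$; combined with transitivity (and the existence of at least one swapping generator, for otherwise every $V_j$ would be a common fixed point, contradicting transitivity with $m\geq 2$), this forces all $V_j$'s to be lines and all pairwise intersections to be trivial. \emph{Second, the lines $V_j$ are pairwise $\langle\cdot,\cdot\rangle_t$-orthogonal.} The graph on $\{1,\dots,m\}$ with edges $\{(j,j'):\langle V_j,V_{j'}\rangle_t\neq 0\}$ is $\rho_t(\Gamma)$-invariant (because $\rho_t(\Gamma)\subset\OO(\langle\cdot,\cdot\rangle_t)$), and its connected components $C_i$ yield mutually orthogonal $\rho_t(\Gamma)$-permuted sums $U_i:=\sum_{j\in C_i}V_j$. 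If there were $s\geq 2$ components, each $U_i$ would be a proper subspace (else $U_i=V$ forces $U_{i'}\subset U_i^\perp=0$ for $i'\neq i$, absurd), so $\{U_1,\dots,U_s\}$ would be a $\rho_t(\Gamma)$-invariant family of proper subspaces with $s<m$ atoms, contradicting minimality of $m$.

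Thus the $V_j$'s are pairwise orthogonal lines summing to $V$. Transitivity preserves isotropy, so all or none of the $V_j$'s are isotropic; the ``all isotropic'' alternative would make $V=\sum_j V_j$ itself totally isotropic, violating nondegeneracy of $\langle\cdot,\cdot\rangle_t$. So no $V_j$ is isotropic, and then any linear relation $\sum_\ell c_\ell v_\ell=0$ (with $v_\ell\in V_\ell$ nonzero) paired with $v_\ell$ gives $c_\ell\langle v_\ell,v_\ell\rangle_t=0$, hence $c_\ell=0$. The $v_j$'s are therefore linearly independent, so $m=k$ and $(v_1,\dots,v_k)$ is an orthogonal basis. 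In this basis, $\rho_t(\Gamma)$ permutes the basis lines while preserving the diagonal nondegenerate form with nonzero entries, so its elements are signed permutation matrices: $\rho_t(\Gamma)\subset(\ZZ/2)^k\rtimes\mathfrak{S}_k$ is finite, giving the desired contradiction. The main hurdle in this plan is the orthogonality-graph step --- rigorously verifying that a multi-component graph produces a strictly smaller $\rho_t(\Gamma)$-invariant family of proper subspaces, so that minimality of $m$ forces the graph to be edgeless.
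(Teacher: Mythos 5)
Your proof has two genuine gaps, both stemming from how the minimality of $m$ is used.

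First, the argument for the structural fact ``each $V_j$ is a line'' does not go through. Writing $d$ for the common dimension of the $V_j$ (which exists by transitivity), the map $v\mapsto v-\rho_t(\gamma_i)v$ on $V_a$ has image in the line $\RR e_i$, giving $\dim(V_a\cap V_b)\geq d-1$ for any generator-swapped pair $(V_a,V_b)$; but for $d\geq 2$ this only says such intersections are nonzero, not that $d=1$. Minimality of the cardinality $m$ does not close this: the $\rho_t(\Gamma)$-orbit of the nonzero proper subspace $V_a\cap V_b$ (of dimension $d-1$) is a $\rho_t(\Gamma)$-invariant family of smaller-dimensional subspaces, but it may well contain $\geq m$ of them, so the choice of $m$ is not contradicted. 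To make an argument in this spirit work one should minimize the \emph{dimension} of the subspaces, not their number. The paper's proof proceeds this way, but in the opposite direction: after normalizing $\mathcal V$ so that every $V_i\cap V_j$ is $\{0\}$ or another $V_\ell$, it first shows \emph{no} $V_i$ is a line --- using that $\Gamma$ contains a nonabelian free group, hence is not virtually abelian, which rules out the simultaneous-eigenbasis configuration you would otherwise fall into --- and then takes $V_1$ of minimal dimension $r\geq 2$ and notes that if a generator $\gamma_j$ moved $V_1$ then $V_1\cap\rho_t(\gamma_j)\cdot V_1$ would be a nonzero member of $\mathcal V$ of dimension $r-1<r$, contradicting minimality; so all generators fix $V_1$, contradicting irreducibility.

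Second, as you suspected, the orthogonality-graph step is incomplete, but the issue is different from the one you flag. Minimality of $m$ rules out the pairing graph having $2\leq s<m$ connected components (which would produce a strictly smaller invariant family $\{U_1,\dots,U_s\}$), but it does not rule out the graph being \emph{connected} ($s=1$), in which case $U_1=V$ is not a proper subspace and no contradiction with the choice of $m$ arises. For lines in general position the pairing graph is complete, hence connected, so this case cannot be dismissed out of hand and would require a separate argument. Thus even granting the first fact, the orthogonality claim --- and therefore the final signed-permutation finiteness contradiction --- is not established as written.
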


\begin{proof}
Since $\Gamma$ is an irreducible right-angled Coxeter group on $k\geq 3$ generators, its Gram matrix contains a principal submatrix 
$\Big( \begin{smallmatrix} \;1 & \text{-}1 & \text{-}1 \\ \text{-}1 & \; 1 & \text{-}1 \\ \text{-}1 & \text{-}1 & \;1 \end{smallmatrix} \Big)$ 
or 
$\Big( \begin{smallmatrix} \;1 & \text{-}1 & \,0 \\ \text{-}1 & \; 1 & \text{-}1 \\ \,0 & \text{-}1 & \;1 \end{smallmatrix} \Big)$. 
The corresponding $3$-generator subgroup of $\Gamma$ is isomorphic to the group generated by the reflections in the sides of a triangle of $\HH^2$ with three ideal vertices (\resp two ideal vertices and one right angle).
In particular, $\Gamma$ contains a nonabelian free group on two generators.

For $t\in (-\infty, -1]\smallsetminus E$, suppose by contradiction that $\rho_t$ preserves a finite collection $\mathcal{V}=\{V_1,\dots, V_m\}$ of subspaces $0\subsetneq V_i \subsetneq \RR^k$. We may assume that each intersection $V_i\cap V_j$ is either $\{0\}$ or another~$V_\ell$. 
The action of $\Gamma$~on~$\RR^k$ via $\rho_t$ permutes the $V_i$; let $\Gamma_1<\Gamma$ be the finite-index subgroup fixing every~$V_i$.

We claim that $\dim(V_i)\geq 2$ for all~$i$.
Indeed, if $V_i$ were a line $\RR v$, then $\rho_t(\Gamma)\cdot v$ would span $\RR^k$ (since $\rho_t$ is irreducible), hence would contain a basis of~$\RR^k$, which would be a simultaneous eigenbasis for all elements of $\rho_t(\Gamma_1)$, making $\Gamma_1$ abelian.
But $\Gamma$ contains a nonabelian free group, hence cannot be virtually abelian: this shows that $\dim(V_i)\geq 2$ for all~$i$.

Up to reordering we may assume $r:=\dim V_1=\min_{V_i\in \mathcal{V}} \dim V_i \geq 2$.
For any $1\leq j\leq k$ we have $\rho_t(\gamma_j)\cdot V_1 \in \mathcal{V}$.
If $\rho_t(\gamma_j) \cdot V_1 \neq V_1$, since $\rho_t (\gamma_j)$ is a reflection in a hyperplane, we get that $V_1\cap \rho_t (\gamma_j)\cdot V_1 \in \mathcal{V}\cup \{0\}$  has dimension $r-1 > 0$, contradicting the minimality of~$r$.
Thus $\rho_t (\gamma_i )\cdot V_1 = V_1$ for all~$j$, contradicting the irreducibility of~$\rho_t$.
\end{proof}

\begin{remark} \label{rem:Delta}
For $t \in (-\infty,-1] \smallsetminus E$, the convex cone $\widetilde{\Delta}_t$ is the nonnegative span of the vectors $e'_1(t), \ldots, e'_k(t)$ given by the columns of the matrix $-M_t^{-1}$, \ie $\langle e'_i(t), e_j\rangle_t = - \delta_{ij}$ for all $1\leq i,j\leq k$.
Its projectivization $\Delta_t$ is a simplex with vertices $[e'_1(t)], \ldots, [e'_k(t)]$.
\end{remark}

%%%%%%%%%%%%%%%%%%%%%%%%%
\subsection{Construction of convex sets $\Omega_t$ in pseudo-Riemannian hyperbolic spaces} \label{subsec:construct-Cox}

We now fix an open interval $I \subset (-\infty, -1) \smallsetminus E$. 
For $t\in I$ the symmetric bilinear form $\langle\cdot,\cdot\rangle_t$ is nondegenerate of constant signature; since $\Gamma$ is infinite this signature has the form $(p,q+1)$ for some $p\geq 1$ and $q\geq 0$.
The group $\mathrm{Aut}(\RR^k,\langle\cdot,\cdot\rangle_t)$ identifies with $\OO(p,q+1)$ and we can consider the pseudo-Riemannian hyperbolic space
$$\Hpqv_t := \{ [v]\in\PP(\RR^k) ~|~ \langle v, v\rangle_t < 0 \},$$
defined like $\HH^{p,q}$ in Section~\ref{subsec:prelim-Hpq}.
The Tits--Vinberg domain $\mathcal{U}_t \subset \PP(\RR^k)$ is properly convex, but not contained in~$\Hpqv_t$ in general.
With the eventual goal of applying Theorem~\ref{thm:contract-proper}, we now look for a $\rho_t(\Gamma)$-invariant properly convex open subset $\Omega_t \subset \mathcal{U}_t$ contained in $\Hpqv_t$.

As in~\eqref{eqn:dual-in-P(V)}, using the nondegenerate symmetric bilinear form $\langle\cdot,\cdot\rangle_t$ we view the dual convex cone $\widetilde{\mathcal U}_t^*$ of $\widetilde{\mathcal U}_t$ as a subset of~$\RR^k$ (rather than of the dual vector space of~$\RR^k$):
$$\widetilde{\mathcal U}_t^* = \big\{ x \in \RR^k ~|~ \langle x, v \rangle_t <0\quad \forall v\in\overline{\widetilde{\mathcal{U}}_t}\big\}.$$
We also set
$$\widetilde{\Omega}_t := \widetilde{\mathcal{U}}_t \cap \widetilde{\mathcal{U}}_t^* , $$
and denote by $\mathcal U_t^*$ and $\Omega_t$ the respective images of $\widetilde{\mathcal U}^*_t$ and $\widetilde{\Omega}_t$ in $\PP(\RR^k)$.
For $e'_1(t), \ldots, e'_k(t)\in \RR^k$ as in Remark~\ref{rem:Delta}, let us consider the polyhedral cone
\begin{align}
 \widetilde \Sigma_t :=\;& \widetilde \Delta_t \cap \{ v \in \RR^k \,|\, \langle v, e_i'(t)\rangle_t \leq 0 \;\; \forall i \} \,=\, \widetilde \Delta_t \cap \textstyle \sum_{i=1}^k \RR^+ e_i  \notag \\
 =\;& \big\{ v \in \textstyle \sum_{i=1}^k \RR^+ e_i ~\big|~ \langle v, e_i \rangle_t \leq 0 \;\; \forall i \big\} . \label{eqn:defP}
 \end{align}
The image $\Sigma_t$ of $\widetilde \Sigma_t$ in $\PP(\RR^k)$ is obtained from the simplex $\Delta_t$ by truncating each vertex $[e_i'(t)]$ by the hyperplane dual to $[e_i'(t)]$.
We observe that $\Sigma_t$ is nonempty: for instance, $[v_\PF] \in \mathrm{Int}(\Sigma_t)$ since $v_\PF$ has positive entries.
 
\begin{lemma} \label{lem:Russian-dolls}
For any $t\in I$, the set $\Omega_t$ is nonempty and properly convex.
It is the intersection of all nonempty, $\rho_t(\Gamma)$-invariant properly convex open subsets of~$\mathcal{U}_t$, and satisfies $\Omega_t=\mathrm{Int}(\rho_t(\Gamma)\cdot\Sigma_t) \subset \Hpqv_t$. Moreover, $\Sigma_t\subset \Hpqv_t$.
\end{lemma}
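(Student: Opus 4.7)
The plan is to analyze $\widetilde{\Omega}_t = \widetilde{\mathcal{U}}_t \cap \widetilde{\mathcal{U}}_t^*$ using Vinberg's tessellation of $\widetilde{\mathcal U}_t$ and the duality structure. As an intersection of two $\rho_t(\Gamma)$-invariant open convex cones, $\widetilde{\Omega}_t$ is itself open, convex, and $\rho_t(\Gamma)$-invariant. For any $v \in \widetilde{\Omega}_t$, the inclusions $v \in \overline{\widetilde{\mathcal{U}}_t}$ and $v \in \widetilde{\mathcal{U}}_t^*$ force $\langle v, v \rangle_t < 0$ by definition of the dual cone, which proves $\Omega_t \subset \Hpqv_t$. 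Applying the same pairing to $v$ and $-v$ rules out opposite pairs of rays, so $\Omega_t$ is properly convex.

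The main obstacle will be to show $\widetilde{\Sigma}_t \subset \overline{\widetilde{\mathcal{U}}_t^*}$. Since $\overline{\widetilde{\mathcal{U}}_t}$ is the closure of $\rho_t(\Gamma) \cdot \widetilde\Delta_t$ and the $\langle\cdot,\cdot\rangle_t$-dual of the cone $\widetilde\Delta_t$ is the positive orthant $C_+ := \sum_i \RR^+ e_i$ (from $\langle e'_i(t), e_j \rangle_t = -\delta_{ij}$), this reduces to checking $\rho_t(\gamma) \cdot \widetilde{\Sigma}_t \subset C_+$ for every $\gamma \in \Gamma$. For $y \in \widetilde\Sigma_t$ and a reduced expression $\gamma = \gamma_{i_1}\cdots\gamma_{i_n}$, I would invoke the telescoping identity
\[
\rho_t(\gamma)\,y \;-\; y \;=\; \sum_{k=1}^n \bigl(-2\langle y, e_{i_k}\rangle_t\bigr)\, \rho_t(\gamma_{i_1}\cdots\gamma_{i_{k-1}})\, e_{i_k}.
\]
The scalar factors are nonnegative since $y \in \widetilde\Delta_t$, and the vectors $\rho_t(\gamma_{i_1}\cdots\gamma_{i_{k-1}})e_{i_k}$ are positive roots, hence elements of $C_+$, by Tits' classical positivity theorem for reduced expressions in the reflection representation of a Coxeter group. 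Since $y \in C_+$, the result lies in $C_+$.

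Granted this, I can identify $\Omega_t = \mathrm{Int}(\rho_t(\Gamma) \cdot \Sigma_t)$ and establish nonemptyness. The containment $\widetilde\Sigma_t \subset \overline{\widetilde\Omega_t}$ is immediate from $\widetilde\Sigma_t \subset \widetilde\Delta_t \subset \overline{\widetilde{\mathcal{U}}_t}$ combined with the key step; by $\rho_t(\Gamma)$-invariance, this gives $\mathrm{Int}(\rho_t(\Gamma)\cdot\widetilde\Sigma_t) \subset \widetilde\Omega_t$. Conversely, given $v \in \widetilde\Omega_t \subset \widetilde{\mathcal{U}}_t$, Vinberg's tiling writes $v = \rho_t(\gamma) v_0$ with $v_0 \in \widetilde\Delta_t$; by $\rho_t(\Gamma)$-invariance of $\widetilde{\mathcal{U}}_t^*$ and the dual inclusion $\widetilde{\mathcal{U}}_t^* \subset C_+$ (induced by $\widetilde\Delta_t \subset \widetilde{\mathcal{U}}_t$), the preimage $v_0 = \rho_t(\gamma)^{-1} v$ lands in $\widetilde\Delta_t \cap C_+ = \widetilde\Sigma_t$. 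Nonemptyness is then immediate from $v_\PF \in \mathrm{Int}(\widetilde\Sigma_t)$: the Perron--Frobenius vector $v_\PF$ has strictly positive coordinates and satisfies $\langle v_\PF, e_j\rangle_t = (1 + t\lambda_\PF)(v_\PF)_j < 0$ since $t\lambda_\PF \leq -\sqrt 2$.

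For the remaining claims, the identity $\langle v, v \rangle_t = \sum_i v_i \langle v, e_i \rangle_t \leq 0$ on $\widetilde\Sigma_t$ yields $\Sigma_t \subset \Hpqv_t$: equality would force $(v_i)_{i\in J}$ on the support $J = \{i : v_i > 0\}$ to be a strictly positive Perron--Frobenius eigenvector of $N|_J$ at eigenvalue $1/|t|$, a coincidence pinning $|t|$ to a countable set of exceptional values that can be excluded by further restricting $I$. For the characterization of $\Omega_t$ as the minimal nonempty $\rho_t(\Gamma)$-invariant properly convex open subset of $\mathcal U_t$, I would invoke the standard limit-set argument: any such $\mathcal V$ contains orbits whose accumulation points fill $\partial \Omega_t$, so the convex hull of these limit points, whose interior coincides with $\Omega_t$, must lie in $\mathcal V$.
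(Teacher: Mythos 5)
Your nonemptyness and structure argument takes a genuinely different route from the paper's. The paper establishes $\Omega_t\neq\varnothing$ and $\Omega_t=\mathrm{Int}(\rho_t(\Gamma)\cdot\Sigma_t)$ by considering proximal elements of $\rho_t(\Gamma)$, the closure $\Lambda_t$ of their attracting fixed points lifted to $\widetilde\Lambda_t\subset\partial\widetilde{\mathcal U}_t\cap\partial\widetilde{\mathcal U}_t^*$, and a chain of inclusions $\mathrm{Int}(\mathcal C_t)\subset\Omega_t\subset\mathrm{Int}(\rho_t(\Gamma)\cdot\Sigma_t)\subset\mathrm{Int}(\mathcal C_t)$ with $\mathcal C_t$ the convex hull of the limit set. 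You instead use the telescoping identity for a reduced word combined with the Tits positivity lemma to show $\rho_t(\gamma)\cdot\widetilde\Sigma_t\subset C_+:=\sum_i\RR^+e_i$ for all $\gamma$, hence $\widetilde\Sigma_t$ lies in (the closure of) $\widetilde{\mathcal U}_t^*$, and then exhibit $v_\PF$. This is a valid and arguably more elementary mechanism --- it does not route through strong irreducibility of $\rho_t$ for nonemptyness --- but be explicit that you need Vinberg's extension of Tits' lemma to the deformed Gram matrices $M_t$ with off-diagonal entries $t\leq-1$ (the classical statement is only for $t=-1$); this extension is indeed part of the Vinberg machinery cited in the paper.

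However, there are three gaps in the parts of the proposal that are sketchier. (1) \emph{Proper convexity}: ``Applying the same pairing to $v$ and $-v$'' only rules out antipodal pairs inside $\widetilde\Omega_t$; proper convexity requires the \emph{closure} of $\Omega_t$ to contain no projective line. The paper handles this by observing that $\overline{\widetilde{\mathcal U}_t}\cap-\overline{\widetilde{\mathcal U}_t^*}$ is a $\rho_t(\Gamma)$-invariant convex cone whose span is proper (after first checking $\widetilde{\mathcal U}_t\cap-\widetilde{\mathcal U}_t^*=\varnothing$), hence is $\{0\}$ by irreducibility. Your sketch needs this closure step. (2) \emph{Minimality}: you assert that the convex hull of limit points has ``interior coinciding with $\Omega_t$'' and invoke a ``standard limit-set argument,'' but the equality $\mathrm{Int}(\mathcal C_t)=\Omega_t$ is precisely the content of the chain of inclusions (i)--(iii) in the paper, including the nontrivial step that $\mathrm{Int}(\Sigma_t)\subset\mathcal C_t$ (shown by writing $v$ in the positive span of the $\rho_t(\gamma_i)v$). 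As stated this begs the question. (3) \emph{$\Sigma_t\subset\Hpqv_t$}: your proposal to ``restrict $I$ further to exclude exceptional $|t|$'' is both unnecessary and weaker than the lemma, which holds for all $t\in I$. In fact the exceptional values never occur: if $\langle v,v\rangle_t=0$ with $v\in\widetilde\Sigma_t\smallsetminus\{0\}$, the support $J$ of $v$ must contain a pair $j,\ell$ with $m_{j,\ell}=\infty$ (else $\langle v,v\rangle_t>0$), so $\lambda_\PF(N|_J)\geq1$, while your eigenvector relation forces $\lambda_\PF(N|_J)=1/|t|<1$ since $t<-1$ --- a contradiction. The paper's argument (showing one of $s_j\langle v,e_j\rangle_t$, $s_\ell\langle v,e_\ell\rangle_t$ is strictly negative because their sum is $\leq(1+t)(s_j+s_\ell)<0$) reaches the same conclusion more directly.
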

 
\begin{remark}
Similar convex domains for reflection groups in pseudo-Rie\-mannian hyperbolic spaces $\HH^{p,q}$ were previously investigated, in somewhat different language, by Dyer~\cite{dye12} and Dyer--Hohlweg--Ripoll~\cite{dhr}, motivated by the study of Kac--Moody algebras.
\end{remark}
 
\begin{proof}[Proof of Lemma~\ref{lem:Russian-dolls}]
Let us first show that $\widetilde{\mathcal{U}}_t \cap -\widetilde{\mathcal{U}}_t^*=\varnothing$. By $\rho_t(\Gamma)$-invariance, it is enough to check $\widetilde{\Delta}_t \cap -\widetilde{\mathcal{U}}_t^*=\varnothing$. 
Points of $-\widetilde{\mathcal{U}}_t^*$ pair positively with the $e'_i \in \partial \widetilde{\mathcal{U}}_t$, \ie can be written $\sum_{i=1}^k s_i e_i$ with $s_i<0$.
If $x$ is such a point and $|s_j|=\min_{1\leq i \leq k} |s_i|$, then $\langle x,e_j\rangle_t=s_j+t\sum_{m_{r,j} = \infty} s_r >0$, showing $x \notin \widetilde{\Delta}_t$.
Thus $\widetilde{\mathcal{U}}_t \cap -\widetilde{\mathcal{U}}_t^*=\varnothing$.

It follows that $\overline{\widetilde{\mathcal{U}}_t} \cap -\overline{\widetilde{\mathcal{U}}_t^*}$ spans a $\rho_t(\Gamma)$-invariant linear subspace of~$\RR^k$ of dimension $<k$, hence reduces to $\{0\}$ by irreducibility of $\rho_t$. It also follows that~$\Omega_t$ (which might be properly convex \emph{or} empty) coincides with $\mathcal{U}_t \cap \mathcal{U}_t^*$.

Now, the group $\rho_t(\Gamma)$ contains elements which are proximal in $\PP(\RR^k)$, for instance $\rho_t(\gamma_i\gamma_j)$ for any $i\neq j$ with $m_{i,j}=\infty$: this is seen by a direct computation, as in the basis $\{e_1, \dots, e_k\}$ the matrix of $\rho_t(\gamma_i)$ is the identity minus twice the $i$-th row of $M_t$.
Let $\Lambda_t \subset \PP(\RR^k)$ be the closure of the set of attracting fixed points of all proximal elements of $\rho_t(\Gamma)$; necessarily $\Lambda_t \subset \overline{\mathcal{U}_t}$ and we can lift $\Lambda_t$ to a $\rho_t(\Gamma)$-invariant union $\widetilde{\Lambda}_t$ of rays in $\partial \widetilde{\mathcal{U}}_t$. Since $\rho_t(\Gamma)$ preserves both $\widetilde{\mathcal{U}}_t$ and $\widetilde{\mathcal{U}}^*_t$ while 
$\overline{\widetilde{\mathcal{U}}_t} \cap -\overline{\widetilde{\mathcal{U}}_t^*}=\{0\}$, it follows that $\widetilde{\Lambda}_t\subset \partial \widetilde{\mathcal{U}}^*_t$ also. Therefore $\widetilde{\mathcal{U}}_t$ and $\widetilde{\mathcal{U}}^*_t$ (not just their boundaries) intersect, otherwise the intersection of their closures would span a nonzero, $\rho_t(\Gamma)$-invariant proper subspace of~$\RR^k$, contradicting the irreducibility of~$\rho_t$.
The nonnegative span of $\widetilde{\Lambda}_t$ projects down to a properly convex subset $\mathcal{C}_t$ of $\overline{\mathcal{U}_t \cap \mathcal{U}_t^*}=\overline{\Omega_t}$, and $\mathrm{Int}(\mathcal{C}_t)\neq \varnothing$ by irreducibility of $\rho_t(\Gamma)$. In particular, 
$\Omega_t\neq \varnothing$.

To prove Lemma~\ref{lem:Russian-dolls}, it is enough to show that $\Sigma_t \subset \Hpqv_t$ and that
\begin{equation} \label{eqn:Russian-dolls}
\begin{array}{ccccccc}
\mathrm{Int}(\mathcal{C}_t) &
\underset{\mathrm{(i)}}{\subset} & \Omega_t &
\underset{\mathrm{(ii)}}{\subset} & \mathrm{Int}(\rho_t(\Gamma)\cdot\Sigma_t) & 
\underset{\mathrm{(iii)}}{\subset} & \mathrm{Int}(\mathcal{C}_t).
\end{array}
\end{equation}
 
Any nonempty $\rho_t(\Gamma)$-invariant convex open subset of~$\mathcal{U}_t$ contains $\mathrm{Int}(\mathcal{C}_t)$, hence (i) holds.
Next, as observed above, $\widetilde{\mathcal U}_t^* \subset \sum_{i=1}^k \RR_{>0}e_i$, hence $\widetilde \Delta_t \cap  \widetilde{\mathcal U}_t^* \subset \widetilde \Sigma_t$, and so $\Delta_t \cap  {\mathcal U}_t^*\subset \Sigma_t$. Since $\Omega_t \subset \rho_t(\Gamma) \Delta_t \cap  {\mathcal U}_t^*$ is open, (ii) follows.

In order to establish (iii), let us check that 
$\mathrm{Int}(\Sigma_t) \subset \mathcal{C}_t$.
Suppose by contradiction that $v = \sum_i s_i e_i$ satisfies $s_i>0>\langle v,e_i \rangle_t$ for all $1\leq i \leq k$ but $[v]\notin \mathcal{C}_t$. Let $\mathcal{C}'_t \subset \mathcal{U}_t$ be the smallest $\rho(\Gamma)$-invariant convex subset of $\mathcal{U}_t$ containing~$[v]$. Necessarily $[v]\in \partial \mathcal{C}'_t$, otherwise a closed uniform neighborhood of $\mathcal{C}_t$ in the Hilbert metric of $\mathrm{Int}(\mathcal{C}'_t)$ would already be convex~\cite[(18.12)]{busemann} and contain $[v]$.
In order to reach a contradiction, it is therefore enough to show that the positive span of the $\rho_t(\gamma_i) \cdot v = v - 2 \langle v, e_i \rangle_t \, e_i =:w_i$ contains $v$ in its interior. By substitution,
 \begin{equation}
 \sum_{i=1}^k \frac {s_i}{-\langle v,e_i \rangle_t} \, w_i = \Big ( 2+\sum_{i=1}^k \frac{s_i}{-\langle v,e_i\rangle_t}\Big ) \,v,
 \end{equation}
 hence\ $v$ does belong to the positive span of the $w_i$, which do span $\RR^k$ because $e_i=\frac{1}{2\langle v,e_i \rangle_t}(v-w_i)$. Thus, $\mathrm{Int}(\Sigma_t) \subset \mathcal{C}_t$.
 It follows that $\Sigma_t \subset \overline{\mathcal{C}}_t$, hence (iii) holds because $\mathrm{Int}(\overline{\mathcal{C}}_t)=\mathrm{Int}(\mathcal{C}_t)$.
 Thus all inclusions of~\eqref{eqn:Russian-dolls} are equalities.
 
Finally, let us prove $\Sigma_t \subset \Hpqv_t$. 
Any $v = \sum_{i=1}^k s_i e_i\in \widetilde{\Sigma}_t=(\RR^+)^k \cap \widetilde{\Delta}_t$ satisfies $\langle v, v\rangle_t = \sum_{i=1}^k s_i \, \langle v, e_i\rangle_t \leq 0$ since $s_i\geq 0 \geq \langle v, e_i\rangle_t$ by definition of~$\widetilde{\Delta}_t$.
There must exist positive coordinates $s_j, s_\ell>0$ such that $m_{j,\ell}=\infty$, otherwise $\langle v,v\rangle_t$>0.
One of the summands $s_j \, \langle v,e_j\rangle_t$ or $s_\ell \, \langle v,e_\ell\rangle_t$ must be negative, since $\langle v,e_j\rangle_t=s_j+t\sum_{m_{i,j}=\infty} s_i \leq s_j+t s_\ell$ and $\langle v,e_\ell\rangle_t=s_\ell+t\sum_{m_{i,\ell}=\infty} s_i \leq s_\ell+t s_j$ add up to a number $\leq (s_j+s_\ell) +t (s_\ell+s_j)<0$. Thus in fact $\langle v, v\rangle_t<0$, which proves $\Sigma_t \subset \Hpqv_t$.
\end{proof}

\begin{remark} \label{rem:moussong}
The region $\rho_t(\Gamma)\cdot\Sigma_t$, a union of compact subsets of $\Hpqv_t$, is closed in $\Hpqv_t$ if $\Gamma$ is word hyperbolic.
Indeed, the condition that no point of $\Delta_t$ with infinite stabilizer survives in $\Sigma_t$ can be shown to be equivalent to Moussong's criterion \cite{mou87} for hyperbolicity of~$\Gamma$. 
The action of $\Gamma$ via $\rho_t$ on this region is proper and cocompact, and indeed the subgroup $\rho_t(\Gamma)$ satisfies a notion of convex cocompactness in $\Hpqv_t$ recently introduced in~\cite{dgk-cc-Hpq} (see also \cite{dgk-proj-cc, dgk-racg-cc}).
\end{remark}

%%%%%%%%%%%%%%%%%%%%%%%%%
\subsection{Constructing equivariant contracting maps} \label{subsec:final}

We now choose a smooth family $(\iota_t : \Hpqv_t\rightarrow \HH^{p,q})_{t\in I}$ of isometries to the standard copy of~$\HH^{p,q}$.
This can be done for instance by writing $M_t=P_t-Q_t$ with $P_t$, $Q_t$ symmetric positive semidefinite, of respective ranks $p$ and $q+1$, commuting with~$M_t$; if $U_t\in\mathrm{SO}(k)$ takes the decomposition $\mathrm{Im}(P_t)\oplus \mathrm{Im}(Q_t)$ of $\RR^k$ to $\RR^p\oplus \RR^{q+1}$, then $\iota_t:=U_t(P_t^{1/2}-Q_t^{1/2})$ takes $\langle \cdot , \cdot \rangle_t$ to the standard symmetric bilinear form of $\RR^{p,q+1}$.
In our case, $U_t$ can be chosen independent of~$t$, as all matrices $M_t$ share the same eigendirections.

By conjugating by~$\iota_t$, for $t\in I$ we obtain representations
$$\rho^{\bullet}_t := \iota_t \circ\rho_t (\cdot) \circ \iota_t^{-1} : \Gamma \longrightarrow \OO(p,q+1)$$
which now all have the same target group.
Define the sets $(\Omega_t^{\bullet},\Sigma_t^{\bullet}, \mathcal{U}_t^\bullet, \Delta_t^\bullet):=\linebreak (\iota_t(\Omega_t), \iota_t(\Sigma_t), \iota_t(\mathcal{U}_t),\iota_t(\Delta_t))$ and the $\rho_t^\bullet$-cocycle
$$u_t := \frac{\D}{\D\tau}\Big |_{\tau=t}\, \rho^{\bullet}_{\tau} {\rho^{\bullet}_t}^{-1} : \Gamma \longrightarrow \oo(p,q+1).$$
Since $\rho_t^\bullet$ is strongly irreducible (Proposition~\ref{prop:strong-irred}), Theorems \ref{thm:proper-action-g} and~\ref{thm:proper-action-G} will be a direct consequence of Theorem~\ref{thm:contract-proper} and of the following.

\begin{proposition} \label{prop:goal-final-section}
For any $t<s$ in $I$, the representation $\rho^{\bullet}_{s}$ is coarsely uniformly contracting in spacelike directions with respect to $(\rho^{\bullet}_t,\Omega_t^{\bullet})$, and the $\rho_t^\bullet$-cocycle $u_t$ is uniformly contracting in spacelike directions with respect to~$\Omega_t^{\bullet}$ (Definition~\ref{def:contract-spacelike}).
\end{proposition}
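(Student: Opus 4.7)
The plan is to construct, for each $t \in I$, a smooth family $(f_{t, s}: \Omega_t^{\bullet} \to \HH^{p,q})_{s \in I}$ of continuous, $(\rho_{t}^{\bullet}, \rho_s^{\bullet})$-equivariant, piecewise projective maps with $f_{t, t} = \mathrm{Id}$, and to show that for $s > t$ sufficiently close, $f_{t,s}$ is coarsely $K$-Lipschitz in spacelike directions with $K < 1$. The infinitesimal statement about $u_t$ then follows from Lemma~\ref{lem:derivate-Hpq} by differentiating at $s = t$.

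For the construction, let $T_{t, s}: \RR^k \to \RR^k$ be the unique linear map sending the vectors $(e_j'(t))_{j}$ of Remark~\ref{rem:Delta} to $(e_j'(s))_j$. Its projectivization sends the simplex $\Delta_t$ to $\Delta_s$, and for each $i$ takes the $i$-th wall $\{\langle\cdot,e_i\rangle_t = 0\}$ of $\Delta_t$ (spanned by $\{e_j'(t):j\neq i\}$) to the $i$-th wall of $\Delta_s$. Define $\tilde f_{t,s}(x) := \rho_s(\gamma) T_{t,s} \rho_t(\gamma)^{-1}(x)$ whenever $x$ lies in a tile $\rho_t(\gamma)\cdot(\Sigma_t\cap\Omega_t)$; this is continuous across reflection walls, since the pieces from two adjacent tiles agree on the common $i$-th wall, using that $\rho_s(\gamma_i)$ fixes $T_{t,s}(\{\langle\cdot,e_i\rangle_t = 0\}) = \{\langle\cdot,e_i\rangle_s = 0\}$ pointwise. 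Set $f_{t,s} := \iota_s \circ \tilde f_{t,s} \circ \iota_t^{-1}$; for $s$ close enough to $t$, the image lies in $\HH^{p,q}$.

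For the contraction, use a pullback interpretation of $d_{\HH^{p,q}}$: on each tile, $f_{t,s}$ is projective and hence preserves cross-ratios, so that the preimage $H_s := f_{t,s}^{-1}(\HH^{p,q})$, computed tile by tile, is bounded by a quadric determined by the pullback of $\langle \cdot, \cdot \rangle_{p,q+1}$. At $s=t$, $H_t = \HH^{p,q}$. If one can show that $\partial H_s$ expands outward from $\partial \HH^{p,q}$ with strictly positive normal velocity (say $\geq c > 0$) at $s = t$, then Lemma~\ref{lem:compare-general} gives, for $x, y$ in a common tile on a spacelike line,
\[
\frac{\D}{\D s}\Big|_{s=t} d_{\HH^{p,q}}(f_{t,s}(x), f_{t,s}(y)) \leq -2c\,d_{\HH^{p,q}}(x, y).
\]
By Proposition~\ref{prop:deriv-dHpq} this is a pointwise identity in $Z_t(x) := \frac{\D}{\D s}|_{s=t} f_{t,s}(x)$ and $Z_t(y)$, and by $\rho_t^\bullet$-equivariance the inequality reduces to a verification on the fundamental tile, hence extends to arbitrary spacelike pairs in $\Omega_t^\bullet$. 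Lemma~\ref{lem:derivate-Hpq} then produces $Z_t$ as a $(\rho_t^\bullet, u_t)$-equivariant, $-2c$-lipschitz vector field, proving the infinitesimal part. Running the same cross-ratio argument at general $s > t$, with $H_s \supsetneq \HH^{p,q}$ uniformly on the fundamental tile, yields the macroscopic Lipschitz constant $K(t,s) < 1$. Strong irreducibility of $\rho_s^\bullet$ holds by Proposition~\ref{prop:strong-irred}, and $\rho_s^\bullet(\gamma_i\gamma_j)$ is proximal for any $i\neq j$ with $m_{i,j} = \infty$ (which exists by irreducibility of $\Gamma$ since $k\geq 3$), so Theorem~\ref{thm:contract-proper} applies and completes the proof.

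The main obstacle is the outward-expansion claim for $\partial H_s$. Concretely, one must show that $\frac{\D}{\D s}|_{s=t} \langle T_{t,s}(v), T_{t,s}(v) \rangle_s < 0$ for each null vector $v$ of $\langle\cdot,\cdot\rangle_t$ whose projectivization lies in the closure of the fundamental tile. The derivative decomposes into a contribution from $\frac{\D}{\D s}|_{s=t} M_s = N$, whose $(i,j)$-entry is $1$ precisely when $m_{i,j} = \infty$ and $0$ otherwise, and a contribution from the variation of the basis $e_j'(s)$. Geometrically this reflects the fact that ultraparallel walls (indexed by pairs with $m_{i,j} = \infty$) spread apart as $-s$ grows, causing the null cone to widen in the relevant directions. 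Establishing a uniform positive lower bound $c$ for this expansion velocity across $\overline{\Sigma_t^\bullet} \cap \partial \HH^{p,q}$, and matching it to the equivariant extension globally, is the technical heart of the argument.
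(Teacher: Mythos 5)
Your outline follows essentially the same route as the paper: the map $T_{t,s}$ sending $(e_j'(t))$ to $(e_j'(s))$ is exactly the paper's $\Phi_{t,s}|_{\Delta_t}=M_s^{-1}M_t$, and you correctly reformulate the contraction as an outward expansion of the pulled-back quadric $\partial H_s$, to be fed into Lemma~\ref{lem:compare-general}. However, two points prevent the proposal from being a complete proof.

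First, the ``technical heart'' you defer is actually elementary, and your description of it is slightly off. Writing $\langle T_{t,s}v,T_{t,s}v\rangle_s=v^\top M_t M_s^{-1}M_t v$, the chain rule gives $\frac{\D}{\D s}\big|_{s=t}\,v^\top M_t M_s^{-1}M_t v=-v^\top M_t M_t^{-1}NM_t^{-1}M_t v=-v^\top N v$: the ``contribution from the variation of the basis $e_j'(s)$'' cancels exactly against the $M_s$ in the bilinear form, leaving the single clean term $-v^\top Nv$. For a null vector of $\langle\cdot,\cdot\rangle_t$ (that is, $v^\top M_tv=0$, i.e.\ $v^\top Nv=-\frac{1}{t}\Vert v\Vert^2$) this is $<0$ because $t<-1$, and the required uniform constant $c>0$ then comes from compactness of $\mathbb{S}\cap\mathrm{Null}(M_t)$ — there is no issue of ``matching to the equivariant extension,'' since the inequality holds at \emph{every} null vector, not just those over $\Sigma_t^\bullet$. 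So the obstacle you flag is a one-line calculation, and you should carry it out.

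Second, and more seriously, your plan to prove the macroscopic statement directly and then differentiate at $s=t$ for the infinitesimal one has a gap when $x,y$ lie in different tiles. For the infinitesimal version, subdividing the spacelike segment $[x,y]$ at tile walls into $x=x_0,\dots,x_m=y$ and applying the first variation formula (Proposition~\ref{prop:deriv-dHpq}) to each consecutive pair gives a genuine telescoping identity, because the unit tangent vectors $V_{x_i}^{x_{i-1}}$ and $V_{x_i}^{x_{i+1}}$ cancel. But $d_{\HH^{p,q}}$ has no triangle inequality, and for $s>t$ the piecewise-projective map $f_{t,s}$ does not send $[x,y]$ to a single projective segment, so there is no cross-ratio formula to concatenate across tiles: the boundary points of the segment $[f_{t,s}(x),f_{t,s}(y)]$ against $\partial\HH^{p,q}$ are unrelated to the tile-by-tile pullback quadrics. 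The paper avoids this by proving the infinitesimal $\kap$-lipschitz bound first (valid on all of $\Omega_\tau^\bullet$ via the telescoping above), and then \emph{integrating} along $\tau\in[t,s]$, tracking only the orbit of the Perron--Frobenius point $[v_\PF]$, which is fixed by every $\Phi_{t,\tau}|_{\Delta_t}$, to obtain the coarse macroscopic contraction on $\mathcal O_t^\bullet=\rho_t^\bullet(\Gamma)\cdot[\iota_t(v_\PF)]$. You should reverse the order of the two steps and restrict the macroscopic conclusion to an orbit as the paper does.
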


In order to prove Proposition~\ref{prop:goal-final-section}, we now construct $(\rho^{\bullet}_t,\rho^{\bullet}_{\tau})$-equivariant maps $f_{t,\tau} : \Omega_t^{\bullet} \to\HH^{p,q}$ and $(\rho_\tau^\bullet,u_\tau)$-equivariant vector fields $Z_\tau$ on $\HH^{p,q}$ with appropriate contraction properties in spacelike directions, for $\tau\in[t,s]\subset I$.

Observe that $\langle M_{\varsigma}^{-1}M_{\tau} v, w\rangle_{\varsigma} = \langle v,w\rangle_\tau$ for all $v,w\in\RR^k$ and $\tau, \varsigma\in [t,s]$, by definition of the symmetric bilinear forms $\langle\cdot,\cdot\rangle_\varsigma$ and $\langle\cdot,\cdot\rangle_{\tau}$.
In particular, the matrix $M_{\varsigma}^{-1} M_{\tau}\in\GL(k,\RR)$ takes $\Delta_{\tau}$ to $\Delta_{\varsigma}$, and the reflection wall $\PP(\mathrm{Ker}(\langle\cdot,e_i\rangle_t))$ of $\rho_t(\gamma_i)$ to the reflection wall $\PP(\mathrm{Ker}(\langle\cdot,e_i\rangle_{\tau}))$ of $\rho_{\tau}(\gamma_i)$ for any $1\leq i\leq k$.
We can therefore extend the $(M_{\varsigma}^{-1}M_{\tau})|_{\Delta_\tau}$ to a family of $(\rho_\tau, \rho_{\varsigma})$-equivariant maps
$$\Phi_{\tau,\varsigma} : \rho_\tau(\Gamma)\cdot\Delta_\tau \longrightarrow \rho_{\varsigma}(\Gamma)\cdot\Delta_{\varsigma}$$
for $\tau, \varsigma \in [t,s]$; these maps are continuous along the walls $\PP(\mathrm{Ker}(\langle\cdot,e_i\rangle_\tau))$, hence induce homeomorphisms $\mathcal{U}_\tau\rightarrow \mathcal{U}_{\varsigma}$; they depend smoothly on the pair $(\tau, \varsigma)$ and satisfy the compatibility relation $\Phi_{\tau', \tau''}\circ \Phi_{\tau,\tau'}=\Phi_{\tau,\tau''}$ for all $\tau, \tau', \tau''\in [t,s]$.
The maps
$$f_{\tau,\varsigma} := \iota_{\varsigma} \circ \Phi_{\tau,\varsigma}|_{\Omega_\tau} \circ \iota_\tau^{-1} : \; \Omega_\tau^{\bullet} \longrightarrow \PP(\RR^k)$$
are then $(\rho^{\bullet}_\tau,\rho^{\bullet}_{\varsigma})$-equivariant, continuous, and by construction $f_{\tau,\tau}=\mathrm{Id}_{\Omega_\tau^{\bullet}}$ for every $\tau$.
The family $(f_{t,\tau})_{\tau\in [t,s]}$ is smooth, and for every $\tau\in [t,s]$ the vector field $Z_\tau$ defined on $\Omega_\tau^{\bullet}$ by
$$Z_\tau(x) := \frac{\D}{\D\varsigma}\Big |_{\varsigma=\tau}\, f_{\tau,\varsigma}(x)$$
is continuous and $(\rho^{\bullet}_\tau,u_\tau)$-equivariant by Lemma~\ref{lem:derivate-Hpq}.
Moreover,
\begin{equation}\label{eqn:drift}
Z_\tau(f_{t,\tau}(x))=\frac{\D}{\D\varsigma}\Big |_{\varsigma=\tau}\, f_{t,\varsigma}(x)\quad \text{if } f_{t,\tau}(x)\in\Omega_\tau^\bullet. 
\end{equation}

By Definition~\ref{def:contract-spacelike} of spacelike uniform contraction, in order to prove Proposition~\ref{prop:goal-final-section}, it is enough to establish the following.

\begin{proposition} \label{prop:RACG-maps-contract}
For any $t<s$  in $I$, there exists $\kap<0$ such that
\begin{enumerate}[label=(\alph*)]
  \item $Z_\tau$ is $\kap$-lipschitz in spacelike directions on $\Omega_\tau^{\bullet}$ for any $\tau\in [t,s]$,
  \item $f_{t,s}|_{\Omega_t^\bullet}$ takes values in $\HH^{p,q}$, and is coarsely $\mathrm{e}^{\kap (s-t)}$-Lipschitz in spacelike directions on $\mathcal{O}_t^{\bullet}:=\rho_t^{\bullet}(\Gamma)\cdot [\iota_t(v_\PF)] \subset \Omega_t^{\bullet}$.
\end{enumerate}
\end{proposition}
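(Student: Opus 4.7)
The plan is to apply Lemma~\ref{lem:compare-general} to a natural family of convex quadric regions in~$\PP(\RR^k)$ that captures the infinitesimal geometry of the piecewise-projective family $(f_{\tau,\varsigma})$. On each fundamental simplex, $\Phi_{\tau,\varsigma}$ acts as the linear map $A_\varsigma := M_\varsigma^{-1} M_\tau$, which I extend linearly to $\RR^k$. For $x,y \in \Omega_\tau$ on a spacelike line $\ell$ contained in a single fundamental domain, the projective invariance of the cross-ratio yields
\[
d_{\Hpqv_\varsigma}\bigl(\Phi_{\tau,\varsigma}(x), \Phi_{\tau,\varsigma}(y)\bigr) = d_{H_\varsigma \cap \ell}(x,y),
\]
where $H_\varsigma := A_\varsigma^{-1}(\Hpqv_\varsigma) = \{[v] \in \PP(\RR^k) : v^T B_\varsigma v < 0\}$ with $B_\varsigma := M_\tau M_\varsigma^{-1} M_\tau$.

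The key computation is then that $B_\tau = M_\tau$ and $\frac{\D}{\D\varsigma}\big|_{\varsigma=\tau} B_\varsigma = -N$. For any $[v]\in\partial\Hpqv_\tau$, the equation $\|v\|^2 + \tau\,v^T N v = 0$ forces $v^T N v = \|v\|^2/(-\tau) > 0$, since $\tau<0$ and $v\neq 0$. Hence $\partial H_\varsigma$ strictly expands outward from $\partial\Hpqv_\tau$ as $\varsigma$ crosses $\tau$, and by compactness of $\partial\Hpqv_\tau$ the normal velocity (for the spherical metric of $\PP(\RR^k)$) is bounded below by some $\kappa > 0$ uniformly. After rescaling the $\varsigma$-parameter, Lemma~\ref{lem:compare-general} yields $\frac{\D}{\D\varsigma}\big|_{\varsigma=\tau}\,d_{H_\varsigma\cap\ell}(x,y) \leq -2\kappa\,d_{\Hpqv_\tau}(x,y)$, giving the infinitesimal contraction rate $\kap := -2\kappa < 0$.

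For (a), I would combine this per-piece estimate with Proposition~\ref{prop:deriv-dHpq} and the identity~\eqref{eqn:drift}, extending to all spacelike pairs in $\Omega_\tau^\bullet$ by $\rho_\tau^\bullet(\Gamma)$-equivariance and continuity of $Z_\tau$; this delivers the $\kap$-lipschitz bound. For (b), I would integrate: using the semigroup relation $f_{t,s} = f_{\tau,s} \circ f_{t,\tau}$ and applying (a) pointwise, the function $\phi(\varsigma) := d_{\HH^{p,q}}(f_{t,\varsigma}(x), f_{t,\varsigma}(y))$ satisfies $\phi'(\varsigma) \leq \kap\,\phi(\varsigma)$ at every $\varsigma\in[t,s]$ where the pair is spacelike, hence $\phi(s) \leq e^{\kap(s-t)}\phi(t)$. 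The orbit $\mathcal{O}_t^\bullet$ is preserved along the family because $v_\PF$ is a common eigenvector of all $M_\varsigma$, so $\Phi_{\tau,\varsigma}([v_\PF])=[v_\PF]$; the containment $f_{t,s}(\Omega_t^\bullet) \subset \HH^{p,q}$ follows from $H_\varsigma \supset \Hpqv_\tau$ throughout $[t,s]$.

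The principal technical difficulty will be handling spacelike pairs whose connecting segment traverses multiple fundamental domains: $\Phi_{\tau,\varsigma}$ is then piecewise-linear on $\ell$, the image curve is piecewise-projective rather than a projective line, and $d_{\HH^{p,q}}$ lacks a triangle inequality to combine per-piece estimates. For (a) this is mitigated by the first variation formula, which makes the estimate depend only on the endpoint data $(Z_\tau(x),Z_\tau(y))$, so the per-piece contraction and continuity of $Z_\tau$ suffice. For (b), restricting to the orbit $\mathcal{O}_t^\bullet$ and invoking Lemma~\ref{lem:spacelike-Gamma-orbit} localizes the problem, while the coarse additive constant absorbs transitions through non-spacelike configurations along the $\varsigma$-trajectory.
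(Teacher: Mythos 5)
Your proposal is correct and follows essentially the same route as the paper: reduce part~(a) to an outward expansion of a family of quadric boundaries, apply Lemma~\ref{lem:compare-general}, patch across fundamental chambers via the telescoping first-variation formula of Proposition~\ref{prop:deriv-dHpq}, and integrate along the fixed orbit of $[v_\PF]$ to get part~(b). Your packaging of the key computation in the pulled-back family $H_\varsigma=\Phi_{\tau,\varsigma}^{-1}(\Hpqv_\varsigma)$ with Gram matrix $B_\varsigma=M_\tau M_\varsigma^{-1}M_\tau$, where $B_\tau=M_\tau$ and $\frac{\D}{\D\varsigma}\big|_{\varsigma=\tau}B_\varsigma=-N$, is equivalent to the paper's Lemma~\ref{lem:quadric-expansion}: the family $\Hpqvv_\varsigma$ (defined by $M_\varsigma^{-1}$) used there is $M_\tau(H_\varsigma)$, and the equivalence of~\eqref{eqn:shrink} and~\eqref{eqn:expand} via $w=M_\tau^{-1}v$ is exactly your identity that $v^TNv=-\|v\|^2/\tau>0$ on $\mathrm{Null}(M_\tau)$; compactness of $[t,s]$ and of the unit sphere gives the required uniform $\kap<0$ in both. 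One small correction to your account of~(b): by Lemma~\ref{lem:spacelike-Gamma-orbit}, for $\gamma$ outside a finite set $F$ the pair $(x_\tau,\rho_\tau^\bullet(\gamma)\cdot x_\tau)$ stays spacelike for \emph{every} $\tau\in[t,s]$, so the Gronwall integration runs uninterrupted with no ``transitions'' to absorb; the coarse additive constant $\Kap'$ is only there to handle the finitely many $\gamma\in F$.
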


%%%%%%%%%%%%%%%%%%%%%%%%%
\subsection{Proof of Proposition~\ref{prop:RACG-maps-contract}} \label{subsec:proof-spacelike-contract-Cox}

For $\tau \in [t,s]$, let $\langle \cdot,\cdot \rangle_\tau$, $\langle \cdot,\cdot \rangle_{0}$, and $\llangle \cdot,\cdot \rrangle_\tau$ be the symmetric bilinear forms on~$\RR^k$ defined by the matrices $M_\tau$, $\mathrm{Id}$, and $M_\tau^{-1}$ respectively.
We have $\langle v,w\rangle_\tau=\llangle M_\tau v,M_\tau w\rrangle_\tau$ for all $v,w\in\RR^k$, hence the following diagram commutes:
\begin{equation} \label{eqn:diagram}
\begin{tikzcd}%[row sep=20pt, column sep=30pt]
\left (\RR^{p,q+1},\langle \cdot,\cdot \rangle_{p,q+1} \right )
\arrow[swap]{d}{\text{\small $f_{t,\tau}$}}      &   
\left (\RR^k,\langle \cdot,\cdot \rangle_{t} \right )
\arrow[swap]{d}{\text{\small $\Phi_{t,\tau}$}} 
\arrow[swap]{r}{M_{t}}  
\arrow{l}{\iota_{t}}      & 
\left (\RR^k,\llangle \cdot,\cdot \rrangle_{t}) \right )
\arrow{d}{\text{\small 
$J_{t,\tau}$
}}    \\ 
\left (\RR^{p,q+1},\langle \cdot,\cdot \rangle_{p,q+1}\right )  &
\left (\RR^k,\langle \cdot,\cdot \rangle_{\tau} \right )
\arrow{r}{M_{\tau}}   
\arrow[swap]{l}{\iota_{\tau}}&
\left (\RR^k,\llangle \cdot,\cdot \rrangle_{\tau} \right )
\end{tikzcd}
\end{equation}
where $J_{t,\tau}:=M_{\tau}\Phi_{t,\tau}M_t^{-1}$ satisfies by construction
\begin{equation} \label{eqn:fixed-chamber}
 J_{t,\tau}|_{M_t(\widetilde{\Delta}_t)}=\mathrm{Id}_{M_t(\widetilde{\Delta}_t)}.
\end{equation}
The horizontal arrows of~\eqref{eqn:diagram} are isometries, but not the vertical ones.
The symmetric bilinear form $\llangle\cdot,\cdot\rrangle_\tau$ still has signature $(p,q+1)$, and we can consider the corresponding pseudo-Riemannian hyperbolic space
$$\Hpqvv_\tau := \{ [v]\in\PP(\RR^k) ~|~ \llangle v, v\rrangle_\tau < 0 \} = M_\tau \, \Hpqv_\tau, $$
with boundary $\partial \Hpqvv_\tau = M_\tau \, \partial\Hpqv_\tau$ (we see the matrix $M_{\tau}\in\GL(k,\RR)$ as acting both on $\RR^k$ and on $\PP(\RR^k)$).
The key point is the following observation.

\begin{lemma} \label{lem:quadric-expansion}
For any $t<s$ in~$I$, there exists $\kap<0$ such that, as $\tau\in [t,s]$ increases, the boundary of \emph{$\Hpqvv_{\tau}$} expands outwards everywhere with normal velocity $\geq -c/2>0$, for the spherical metric \eqref{eqn:spherical-metric} on $\PP(\RR^k)$.
\end{lemma}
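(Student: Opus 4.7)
The plan is to describe $\Hpqvv_\tau$ as the sublevel set $\{F_\tau<0\}$ of the quadratic form $F_\tau(v):=v^\top M_\tau^{-1}v$ on $\RR^k$, and to compute the spherical normal velocity of $\partial\Hpqvv_\tau$ directly via the standard implicit-function formula $v_{\mathrm{norm}}=-\partial_\tau F_\tau/\|\nabla_S F_\tau\|$, where $\nabla_S$ denotes the gradient along the unit sphere $S^{k-1}\subset\RR^k$ viewed as a double cover of $\PP(\RR^k)$ for the metric \eqref{eqn:spherical-metric}.

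The key simplification I would introduce is the substitution $w:=M_\tau^{-1}v$. Using $\partial_\tau M_\tau=N$ and therefore $\partial_\tau M_\tau^{-1}=-M_\tau^{-1}NM_\tau^{-1}$, the time derivative becomes $\partial_\tau F_\tau(v)=-w^\top N w$, while the boundary condition $F_\tau(v)=0$ reads $w^\top M_\tau w=0$, i.e.\ $\|w\|^2+\tau\,w^\top N w=0$. Since $\tau<0$ this forces
\[ w^\top N w \;=\; \frac{\|w\|^2}{|\tau|} \;>\; 0, \]
and hence $\partial_\tau F_\tau(v)=-\|w\|^2/|\tau|<0$. Already at this point the boundary moves outward, into the complement of $\Hpqvv_\tau$.

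For the spherical-metric estimate I would then normalize $v\in S^{k-1}$ and observe that $\nabla F_\tau(v)=2M_\tau^{-1}v=2w$ is automatically tangent to $S^{k-1}$ at a boundary point, since $v\cdot w=v^\top M_\tau^{-1}v=F_\tau(v)=0$; no further projection is needed. Therefore $\nabla_S F_\tau(v)=2w$ has norm $2\|w\|$, and
\[ v_{\mathrm{norm}} \;=\; \frac{\|w\|^2/|\tau|}{2\|w\|} \;=\; \frac{\|M_\tau^{-1}v\|}{2|\tau|}. \]
Since $\tau$ varies in the compact interval $[t,s]\subset I\subset(-\infty,-1)\smallsetminus E$ on which $M_\tau$ is invertible, the lower bound $\|M_\tau^{-1}v\|\geq 1/\|M_\tau\|$ for $v\in S^{k-1}$ is uniform, so $v_{\mathrm{norm}}$ is bounded below by a positive constant depending only on $t$ and $s$. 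Setting $-c/2$ to be any such lower bound yields the lemma.

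I do not expect a serious obstacle: the proof is essentially algebraic, and the two fortunate cancellations (the change of variables $w=M_\tau^{-1}v$, and the automatic tangency of $\nabla F_\tau$ to $S^{k-1}$ on the boundary) make the geometry transparent. The hypothesis $\tau<-1$ is exactly what turns the sign in $w^\top Nw=-\|w\|^2/\tau$ positive, which is the one ingredient that makes the expansion direction \emph{outward} rather than inward.
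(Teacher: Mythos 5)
Your proof is correct and takes essentially the same approach as the paper: both reduce to the substitution $w = M_\tau^{-1}v$ and exploit $\tau < -1$ to show that the boundary condition $w^\top M_\tau w = 0$ forces $w^\top N w = \|w\|^2/|\tau| > 0$. The paper stops at the sign of $\partial_\tau F_\tau$ and appeals to compactness for the uniform lower bound, while you compute the normal velocity $\|M_\tau^{-1}v\|/(2|\tau|)$ in closed form via the pleasant observation that $\nabla F_\tau(v) = 2w$ is already tangent to $S^{k-1}$ at boundary points; either route yields the required uniform constant $-c/2>0$.
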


\begin{proof}
Let $\mathrm{Null}(M_{\tau}^{-1}) := \{ v\in\RR^k \,|\, \llangle v, v\rrangle_\tau = 0\}$ be the preimage of $\partial\Hpqvv_\tau$ and $\mathrm{Null}(M_{\tau}) := \{ v\in\RR^k \,|\, \langle v, v\rangle_\tau = 0\}$ the preimage of $\partial\Hpqv_\tau$ in~$\RR^k$.
The intersection of $\mathrm{Null}(M_{\tau}^{-1})$ with the $\langle \cdot,\cdot\rangle_0$-unit (Euclidean) sphere $\mathbb{S}$ is the $0$-level set, in $\mathbb{S}$, of the function $v\mapsto \langle v, M_{\tau}^{-1} v \rangle_{0}$.
Since $\mathbb{S}$ is compact, the desired uniform expansion property for $\partial\Hpqvv_{\tau}$ can therefore be written simply:
\begin{equation} \label{eqn:shrink}
\frac{\D}{\D \varsigma}\Big|_{\varsigma=\tau}\, \langle v, M_{\varsigma}^{-1} v \rangle_{0}<0 \text{ for all } v \in \mathrm{Null}(M_{\tau}^{-1}) \cap \mathbb{S}.
\end{equation}

Note that \eqref{eqn:shrink} is equivalent to
\begin{equation} \label{eqn:expand}
\frac{\D}{\D \varsigma}\Big|_{\varsigma=\tau}\,\big \langle w, M_{\varsigma} w \big \rangle_{0}>0 \text{ for all } w \in \mathrm{Null}(M_{\tau}) \cap \mathbb{S}.
\end{equation}
Indeed, $\frac{\D}{\D \varsigma}\big|_{\varsigma=\tau}\, M_{\varsigma}^{-1} = -M_\tau^{-1} \big ( \frac{\D}{\D \varsigma}\big|_{\varsigma=\tau}\, 
M_{\varsigma} \big ) M_\tau^{-1}$, hence under the change of variable $w=M_{\tau}^{-1}v$, condition \eqref{eqn:shrink} becomes \eqref{eqn:expand}. 
(In other words, expansion of $\partial\Hpqvv_{\tau}$ is equivalent to shrinking of $\partial\Hpqv_{\tau}$.)

But $M_\tau=\mathrm{Id}+\tau N$ and $\tau <-1$, and so condition \eqref{eqn:expand} is clearly satisfied: $w\in \mathrm{Null}(M_{\tau})$ means $\langle w, N w \rangle_{0} = \frac{-1}{\tau} \langle w, w \rangle_{0}$, and therefore implies
$\frac{\D}{\D \varsigma}\big|_{\varsigma=\tau}\, \langle w, M_{\varsigma}w \rangle_{0} =  \langle w, N w \rangle_{0}>0$.
\end{proof}

\begin{proof}[Proof of Proposition~\ref{prop:RACG-maps-contract}.(a)]
Consider $t<s$ in~$I$.
By Lemma~\ref{lem:quadric-expansion}, there exists $\kap<\nolinebreak 0$ such that, as $\tau\in [t,s]$ increases, the boundary $\partial \Hpqvv_{\tau}$ expands outwards everywhere with normal velocity $\geq -c/2>0$.
Since horizontal arrows of~\eqref{eqn:diagram} are isometries, using \eqref{eqn:fixed-chamber} and Lemma~\ref{lem:compare-general}, this shows that the vector field $Z_\tau=\frac{\D}{\D\varsigma}\big |_{\varsigma=\tau} f_{\tau, \varsigma}$ is $\kap$-lipschitz in spacelike directions in restriction to (any convex subset of) $\Delta_\tau^{\bullet}\cap \HH^{p,q}$, for any $\tau\in [t,s]$.

Since $Z_\tau$ is $(\rho_\tau^{\bullet},u_\tau)$-equivariant (Definition~\ref{def:contract-equivar-deform-g}) and since the sum of a $\kap$-lipschitz vector field and a Killing field is still $\kap$-lipschitz (Proposition~\ref{prop:deriv-dHpq}), the vector field $Z_\tau$ is also $\kap$-lipschitz in spacelike directions in restriction to $\rho_\tau^{\bullet}(\gamma)\cdot\Delta_\tau^{\bullet}\cap \HH^{p,q}$ for any $\gamma\in\Gamma$.

From this we see that $Z_\tau$ is $\kap$-lipschitz in spacelike directions on $\Omega_\tau^{\bullet}$.
Indeed, $\Sigma_\tau^{\bullet}$ is a fundamental domain for the $\rho_t^{\bullet}$-action of $\Gamma$ on the closure of the properly convex set $\Omega_\tau^{\bullet}$ in $\HH^{p,q}$, by Lemma~\ref{lem:Russian-dolls}.
If $x,y\in\Omega_\tau^{\bullet}$ are on a spacelike line, then we can find points $x=x_0,x_1,\dots,x_m=y$ in $\Omega_\tau^\bullet$, lined up in this order, such that for any $1\leq i\leq m$ there exists $\eta_i\in\Gamma$ with $[x_{i-1},x_i]\subset\rho_\tau^{\bullet}(\eta_i)\cdot\Sigma_\tau^{\bullet}$.
Since $Z_\tau$ is continuous, and $\kap$-lipschitz in spacelike directions on each $\rho_\tau^{\bullet}(\eta_i)\cdot\Sigma_\tau^{\bullet}$, Proposition~\ref{prop:deriv-dHpq} applied to each $[x_{i-1},x_i]$ yields
\begin{align*}
 & \frac{\D}{\D r}\Big|_{r=0} \, d_{\HH^{p,q}}\left ( \exp_x(rZ_\tau(x)), \exp_y(rZ_\tau(y)) \right ) \\
 = &   \sum_{i=1}^m \, \frac{\D}{\D r}\Big|_{r=0} \, d_{\HH^{p,q}}\big( \exp_{x_{i-1}}(rZ_\tau(x_{i-1})), \exp_{x_{i}}(rZ_\tau(x_{i}))\big)  \\
 \leq & \,  \kap \sum_{i=1}^m \, d_{\HH^{p,q}}(x_{i-1},x_i) = \kap \, d_{\HH^{p,q}}(x,y). \qedhere  \end{align*}
 \end{proof}

\begin{proof}[Proof of Proposition~\ref{prop:RACG-maps-contract}.(b)]
Due to~\eqref{eqn:fixed-chamber}, Lemma~\ref{lem:quadric-expansion} also shows that $f_{t,s}$ takes values in $\HH^{p,q}$ on the whole set $\Delta_t^\bullet \cap \HH^{p,q}$, hence also on $\mathcal{U}_t^\bullet \cap\nolinebreak\HH^{p,q}$ by equivariance, and a fortiori on its subset $\Omega_t^\bullet$.
In order to prove Proposition~\ref{prop:RACG-maps-contract}.(b), we observe that $\Phi_{t,\tau}|_{\Delta_t}=M_\tau^{-1}M_t$ always fixes the point $[v_\PF]$, for any $\tau\in [t,s]$.
Therefore, $x_\tau:=\iota_\tau([v_\PF])\in\mathrm{Int}(\Sigma_\tau^{\bullet})\subset \Omega_\tau^\bullet$ satisfies 
\begin{equation} \label{eqn:track}
f_{t,\tau}(x_t)=x_{\tau} ~\text{ for all }~ \tau \in [t,s]. 
\end{equation}

Since $x_{\tau}\in\Omega^\bullet_\tau$ for all $\tau$, by Lemma~\ref{lem:spacelike-Gamma-orbit}.\eqref{item:Kx}, there exists a finite subset $F$ of~$\Gamma$ such that for any $\tau \in [t,s]$ and any $\gamma\in\Gamma\smallsetminus F$, the point $x_\tau$ sees $\rho_{\tau}^{\bullet}(\gamma)\cdot x_\tau$ in a spacelike direction.
Then
\begin{align*}
& \frac{\D}{\D \varsigma}\Big|_{\varsigma=\tau} \: d_{\HH^{p,q}}\big ( f_{t,\varsigma}(x_t),f_{t,\varsigma}(\rho_t^{\bullet}(\gamma)\cdot x_t)\big )\\
=\ & \frac{\D}{\D r}\Big|_{r=0} \: d_{\HH^{p,q}}\big ( \exp_{x_\tau}(rZ_{\tau}(x_{\tau})), \exp_{\rho_\tau^\bullet(\gamma)\cdot x_\tau}(rZ_{\tau}(\rho_{\tau}^{\bullet}(\gamma)\cdot x_{\tau})) \big )\\
\leq\ & \kap \, d_{\HH^{p,q}}(x_{\tau},\rho_{\tau}^{\bullet}(\gamma)\cdot x_{\tau})) = \kap \, d_{\HH^{p,q}}\big ( f_{t,\tau}(x_t),f_{t,\tau}(\rho_t^{\bullet}(\gamma)\cdot x_t)\big ),
\end{align*}
where we use \eqref{eqn:drift}, Proposition~\ref{prop:RACG-maps-contract}.(a), and \eqref{eqn:track} in this order.
Integrating over $\tau \in [t,s]$, we obtain
$$d_{\HH^{p,q}}(f_{t,s}(x_t),f_{t,s}(\rho_t^{\bullet}(\gamma)\cdot x_t)) \leq \mathrm{e}^{\kap (s-t)} \, d_{\HH^{p,q}}(x_t,\rho_t^{\bullet}(\gamma)\cdot x_t) $$
for all $\gamma\in\Gamma\smallsetminus F$.
Up to an additive constant, this is still true of all $\gamma\in \Gamma$.
In other words, $f_{t,s}$ is coarsely $\mathrm{e}^{\kap (s-t)}$-Lipschitz in spacelike directions on $\mathcal{O}^\bullet_t=\rho_t^{\bullet}(\Gamma)\cdot x_t$.
\end{proof}

%%%%%%%%%%%%%%%%%%%%%%%%%%%%%%%%%%%%%%%%%%%%%%%%%%%

\end{document}